\providecommand{\ifshorten}{\iffalse}\providecommand{\ifonlythms}{\iffalse}
\ifshorten \documentclass[10pt]{amsart} \usepackage{a4wide} \else
\newcommand{\bp}{\begin{proof}} \newcommand{\ep}{\end{proof}}
\newcommand{\comment}[1]{}
\newcommand{\restricted}[1]{{\upharpoonright}_{#1}}
\newcommand{\measureskel}[2]{\mathrm{Sk}_{#2}(#1)}
\newcommand{\measureleaf}[2]{\mathrm{Lf}_{#2}(#1)}
\newcommand{\muskel}[1][T]{\measureskel{#1}{\mu}}
\newcommand{\muskelp}{\measureskel{T'}{\mu'}}
\newcommand{\muleaf}[1][T]{\measureleaf{#1}{\mu}}
\newcommand{\set}[2]{\{ #1 \mid #2 \}}
\newcommand{\bset}[2]{\bigl\{ #1 \bigm| #2 \bigr\}}
\newcommand{\Bset}[2]{\Bigl\{#1\Bigm|#2\Bigr\}}
\newcommand{\dis}{\mathrm{dis}}
\ifshorten \newcommand{\sm}{} \else \newcommand{\sm}{\smallskip} \fi
 \newif\ifpctex
\newcommand{\bnu}{\boldsymbol{\upsilon}} 
\newcommand{\bnux}{\bnu^\smallx}
\newcommand{\bnuT}{\bnu^{(T, \mu)}}
\newcommand{\bnuTinf}{\bnu^{(T_\infty, \mu_\infty)}}
\newcommand{\bnuas}[1][\smallx\!]{$\bnu^{#1}$\nbd almost}
\newcommand{\bnuTas}{$\bnuT$\nbd almost}
\newcommand{\bnuTinfas}{$\bnuTinf$\nbd almost}
\newcommand{\rspan}[1]{\llbracket #1 \rrbracket} 
\newcommand{\rspanu}{\rspan{\underline{u}}} 
\newcommand{\rspanU}{\rspan{\underline{U}}}
\newcommand{\rspanUN}[1][N]{\rspan{\underline{U}_{#1}}}
\newcommand{\taux}[1][\smallx]{\varsigma_{#1}}
\newcommand{\tauxN}[1][N]{\taux[\smallx_{#1}]}
\newcommand{\tauxn}[1][\smallx]{\tau_{#1}}
\newcommand{\tauxNn}[1][N]{\tauxn[\smallx_{#1}]}
\DeclareMathOperator{\supp}{supp}
\DeclareMathOperator*{\essup}{ess\,sup}
\DeclareMathOperator{\linhull}{span}
\DeclareMathOperator{\At}{At}
  \newtheorem{definition}{Definition}[section]
  \newtheorem{theorem}[definition]{Theorem}
  \newtheorem{cond}[definition]{Condition}
  \newtheorem{proposition}[definition]{Proposition}
  \newtheorem{lemma}[definition]{Lemma}
  \newtheorem{cor}[definition]{Corollary}
  \newcommand{\beCond}[2]{\Rand{\vspace{0,6cm}\tt #1}\begin{cond}[#2]
  \label{#1}} \theoremstyle{definition}
  \newtheorem{rem}[definition]{Remark}
  \newtheorem{ex}[definition]{Example}
  \newcommand{\exend}{\unskip\enskip\hbox{}\nobreak\hfill$\lozenge$}
  \newenvironment{example}{\begin{ex}}{\nopagebreak\exend\end{ex}}
  \newenvironment{remark}{\begin{rem}}{\exend\end{rem}}
  \numberwithin{equation}{section}
  \newtheoremstyle{step}{3pt}{0pt}{\itshape}{}{\bf}{}{.5em}{}
\theoremstyle{step} \newtheorem{step}{Step}
\newcommand{\E}{\mathbb{E}} \newcommand{\K}{\mathbb{K}}
\newcommand{\R}{\mathbb{R}} 
\newcommand{\N}{\mathbb{N}} \newcommand{\M}{\mathbb{M}}	
 \newcommand{\CE}{\mathcal{E}}
\newcommand{\CM}{\mathcal{M}}
\newcommand{\C}{\mathcal{C}}
\newcommand{\Cb}{\C_b}
\newcommand{\law}{\mathcal{L}}
\renewcommand{\P}{\mathcal{M}_1}
\newcommand{\F}{\mathcal{F}}
\newcommand{\A}{\mathcal{A}}
\newcommand{\G}{\mathcal{G}}
\newcommand{\Ft}{\tilde\F}
\newcommand{\Dom}{\mathcal{D}}
\newcommand{\smallcal}[1]{
	{\mathchoice{\scriptstyle}{\scriptstyle}{\scriptscriptstyle}{\scriptscriptstyle}\mathcal{#1}}}
\newcommand{\smallx}{\smallcal{X}}
\newcommand{\smally}{\smallcal{Y}}
\renewcommand{\H}{\mathbb{H}}		
\newcommand{\Sx}[1][\smallx]{\mathbb{S}_{#1}}
\newcommand{\Hbi}{\H^{f,\sigma}} 	
\newcommand{\Mn}[1][n]{\mathbb M_{#1}}
\newcommand{\Hn}[1][n]{\H_{#1}}
\newcommand{\Ht}[1][n]{\widetilde\H_{#1}}
\newcommand{\HK}[1][K]{\H^{#1, \sigma}}
\newcommand{\HC}[1][C]{\H^{K, \sigma}_{#1}}
\newcommand{\HCC}[1][C]{\H^{#1, \sigma}_{#1}}
\newcommand{\Pit}{\widetilde\Pi_n}
\newcommand{\Pin}{\Pi_n}
\newcommand{\Phit}{\tilde\Phi}
\newcommand{\Psit}{\tilde\Psi}
\newcommand{\Rtree}{$\protect\mathbb R$\nbd tree} 
\newcommand{\Rtrees}{\Rtree s} 
\newcommand{\LWVtopo}{{\rm LWV}\nbd topology}
\newcommand{\sGwtopo}{{\rm sGw}\nbd topology}
\newcommand{\pGwtopo}{{\rm pGw}\nbd topology}
\newcommand{\LWVconv}{{\rm LWV}\nbd convergence}
\newcommand{\sGwconv}{{\rm sGw}\nbd convergence}
\renewcommand{\(}{\bigl(}	\renewcommand{\)}{\bigr)}
\newcommand{\nbd}{\protect\nobreakdash-\hspace{0pt}}	
\newcommand{\Rand}[1]{\marginpar{#1}} 
\newcommand{\D}{\displaystyle}
\newcommand{\newatop}[2]{\underset{\scriptscriptstyle #2}{#1}}	
\newcommand{\convto}[2][\infty]{\,\newatop{\longrightarrow}{#2 \to #1}\,}	
\newcommand{\wconvto}[2][\infty]{\,\newatop{\Longrightarrow}{#2 \to #1}\,}	
\newcommand{\Tno}{\wconvto{n}}
\newcommand{\TNo}{\wconvto{N}}
\newcommand{\tNo}{\convto{N}}
\newcommand{\tol}{\stackrel\law\Rightarrow}
\newcommand{\tolN}[1][N]{\underset{#1\to\infty}{\stackrel\law\Longrightarrow}}
\newcommand{\tolNs}[1][N]{\;\tolN[#1]\;}
\newcommand{\toLWV}{\stackrel{\scriptscriptstyle\mathrm{LWV}\;}{\longrightarrow}}
\newcommand{\ToLWV}{\stackrel{\scriptscriptstyle\mathrm{LWV}\;}{\Longrightarrow}}
\newcommand{\toLWVN}[1][N]{\underset{\scriptscriptstyle #1\to\infty}{\toLWV}}
\newcommand{\ToLWVN}[1][N]{\underset{\scriptscriptstyle #1\to\infty}{\ToLWV}}
\newcommand{\topGwN}[1][N]{\underset{\scriptscriptstyle #1\to\infty}{\topGw}}
\newcommand{\toGwN}[1][N]{\underset{\scriptscriptstyle #1\to\infty}{\toGw}}
\newcommand{\topGw}{\stackrel{\scriptscriptstyle\mathrm{pGw}\;}{\longrightarrow}}
\newcommand{\tosGw}{\stackrel{\scriptscriptstyle\mathrm{sGw}\;}{\longrightarrow}}
\newcommand{\toGw}{\stackrel{\mathrm{Gw}\;}{\longrightarrow}}
\newcommand{\toSk}[1][]{\,\underset{\scriptscriptstyle\mathrm{#1}\;}{\stackrel{\mathrm{Sk}\;}{\Longrightarrow}}\,}
\newcommand{\toN}{\,\underset{N\to\infty}{\longrightarrow}\,}
\newcommand{\folge}[2][n]{(#2_{#1})_{#1\in\N}}
\newcommand{\dist}{\R_+^{\binom{\N_0}{2}}}
\newcommand{\distnm}{\R_+^{\binom{n+m+1}{2}}}
\newcommand{\nlim}{\lim_{n\to\infty}}
\newcommand{\mun}[1][]{\mu_{#1}^{\otimes n}}
\newcommand{\uvec}{\underline{u}}
\newcommand{\Uvec}{\underline{U}}
\newcommand{\vvec}{\underline{v}}
\newcommand{\vphi}{\varphi}
\newcommand{\eps}{\varepsilon}
\newcommand{\Psipol}[1][\gamma]{\Psi^{#1,n,\Phi}}
\newcommand{\Psitpol}[1][\gamma]{\Psi^{#1,n,\tilde\Phi}}
\newcommand{\Phipol}{\Phi^{\gamma,m,\vphi}}
\newcommand{\nuh}{\hat{\nu}}
\newcommand{\gammaPhi}{\gamma_{\Phit}}
\newcommand{\plainint}[2]{\int #1\;\mathrm{d}#2}
\newcommand{\plainintset}[3]{\int_{#1} #2\;\mathrm{d}#3}
\newcommand{\plainintmu}[2][]{\plainint{#2}{\mun[#1]}}
\newcommand{\plainintsetmu}[3][]{\plainintset{#2}{#3}{\mun[#1]}}
\newcommand{\integral}[3]{\int #1\;#2(\mathrm{d}#3)}
\newcommand{\intset}[4]{\int_{#1} #2\;#3(\mathrm{d}#4)}
\newcommand{\intmu}[2][]{\integral{#2}{\mun[#1]}{\uvec}}
\begin{document}
{\let\thefootnote\relax\footnotetext{Wolfgang L\"ohr was partially supported by the
European Science Foundation under the RGLIS short visit grant 5429}}
\title[The Pruning Process]
{{\Large Convergence of bi-measure $\mathbb R$-trees and {the} pruning process\footnote{Preprint of \emph{Ann.\
Inst.\ Henri Poincar\'e Probab.\ Stat. 51 (2015), no. 4, 1342--1368}}}}
\thispagestyle{empty}

\ifshorten\else
\author{Wolfgang L\"ohr}
\address{Wolfgang L\"ohr\\
Universit\"at Duisburg-Essen\\ Fakult\"at f\"ur Mathematik, Campus Essen\\ Universit\"atsstrasse 2\\ Essen 45141 Germany}
\email{wolfgang.loehr@uni-due.de}

\author{Guillaume Voisin}
\address{Guillaume Voisin\\
Universit\"at Duisburg-Essen\\ Fakult\"at f\"ur Mathematik, Campus Essen\\ Universit\"atsstrasse 2\\ Essen
45141\\
Germany - Universit\'{e} Paris Sud \\ Laboratoire de Math\'{e}matiques \\ 91405 \\ Orsay \\ France}
\email{guillaume.voisin1@u-psud.fr}

\author{Anita Winter}
\address{Anita Winter\\
Universit\"at Duisburg-Essen\\ Fakult\"at f\"ur Mathematik, Campus Essen\\ Universit\"atsstrasse 2\\ Essen 45141\\
Germany}
\email{anita.winter@uni-due.de}
\fi

\date{\today}

\ifshorten\else
\keywords{tree-valued Markov process,
  CRT,
  real trees,
	pruning procedure,
	pointed Gromov-weak topology,
  Prohorov metric,
  non-locally finite measures}

\subjclass[2000]{60F05, 60B12, 60J25, 05C05, 05C10, 60G55}
\fi

\ifonlythms\maketitle\begin{xcomment}{lemma,theorem,proposition,definition,cor,caution}\fi

\maketitle

\begin{abstract} In \cite{AldousPitman1998} a tree-valued Markov chain is derived by pruning off more and more subtrees along the edges of a Galton-Watson tree.
More recently, in \cite{AbrahamDelmas2012}, a continuous analogue of the tree-valued pruning dynamics is constructed along L\'{e}vy  trees.
In the present paper, we provide a new topology which allows to link the discrete and the continuous dynamics by
considering them as instances of the same strong Markov process with different initial conditions.
We construct this pruning process on the space of so-called bi-measure trees, which are metric measure spaces
with an additional pruning measure. The pruning measure is assumed to be finite on finite trees, but not
necessarily locally finite. We also characterize the pruning process analytically via its Markovian generator
and show that it is continuous in the initial bi-measure tree.
A series of examples is given, which include the case where the pruning measure is the length measure on the
underlying tree.
\end{abstract}

\medskip
\renewcommand{\abstractname}{R\'{e}sum\'{e}}
\begin{abstract} Dans \cite{AldousPitman1998}, les auteurs obtiennent une cha\^{i}ne de Markov \`{a} valeurs arbres en \'{e}laguant de plus en plus de sous-arbres le long des n\oe uds d'un arbre de Galton-Watson. Plus r\'{e}cemment dans \cite{AbrahamDelmas2012}, un analogue continu de la dynamique d'\'{e}lagage \`{a} valeurs arbres est construit sur des arbres de L\'{e}vy. Dans cet article, nous pr\'{e}sentons une nouvelle topologie qui permet de relier les dynamiques discr\`{e}tes et continues en les consid\'{e}rant comme des exemples du m\^{e}me processus de Markov fort avec des conditions initiales diff\'{e}rentes. Nous construisons ce processus d'\'{e}lagage sur l'espace des arbres appel\'{e}s bi-mesur\'{e}s, qui sont des espaces m\'{e}triques mesur\'{e}s avec une mesure d'\'{e}lagage additionnelle. La mesure d'\'{e}lagage est suppos\'{e}e finie sur les arbres finis, mais pas n\'{e}cessairement localement finie. De plus, nous caract\'{e}risons analytiquement le processus d'\'{e}lagage par son g\'{e}n\'{e}rateur infinit\'{e}simal et montrons qu'il est continu en son arbre bi-mesur\'{e} initial. Plusieurs exemples sont donn\'{e}s, notamment le cas o\`{u} la mesure d'\'{e}lagage est la mesure des longueurs sur l'arbre sous-jacent.
\end{abstract}

\newpage

{\tiny
\begin{tableofcontents}
\end{tableofcontents}
}

\section{Introduction and motivation}
\label{S:intro}

Let ${\mathcal G}_1$ be a rooted Galton-Watson tree with an offspring generating function $g$. For $0\le u\le 1$, let ${\mathcal G}_u$ be the subtree of ${\mathcal G}_1$ obtained by retaining each edge with probability $u$. Lyons (\cite{Lyons1992}) showed that ${\mathcal G}_u$ is again a Galton-Watson tree which corresponds to the offspring generating function $g_u=g(1-u+u\cdot)$.
As one can couple the pruning procedures for several $u\in[0,1]$ in such a way that ${\mathcal G}_{u'}$ is a rooted subtree of ${\mathcal G}_{u}$ whenever $u'\le u$, they give rise to a non-decreasing tree-valued Markov process $({\mathcal G}_u)_{u\in[0,1]}$ which was further studied in Aldous and
Pitman (\cite{AldousPitman1998}). Recently, Abraham, Delmas and He consider in \cite{AbrDelHe2012} another
pruning procedure on Galton Watson trees
where cut points fall on the branch points to the effect that the  subtree above is pruned.
Here each node of the initial Galton-Watson tree is cut
independently with probability
$1-u^{n-1}$ where $n$ is the number of children of the node.

In the same spirit some authors
\comment{Abraham and Delmas constructed}
consider continuum tree analogues of pruning dynamics. Compare, for example,
\cite{AldousPitman1998b,AbrahamSerlet2002} for a pruning proportional to the length on the skeleton of a
Brownian CRT, \cite{Miermont2005} for a pruning on the infinite branch points of a stable L\'{e}vy tree,
\cite{AbrDel2008} for a pruning on the infinite branch points of a L\'{e}vy tree without Brownian part,
\cite{AbrDelVoi2010, AbrahamDelmas2012} for a combined pruning proportional to the length and on the infinite branch
points of a general L\'{e}vy tree.

In \cite{AbrahamDelmas2012} it is conjectured that the pruning procedure presented in the same paper is the
continuous analogue of a mixture of the pruning procedures
suggested in \cite{AldousPitman1998} and \cite{AbrahamDelmas2012}, that is of pruning procedures on Galton-Watson trees where cut points fall on edges as well as on nodes.
However, no precise link between the discrete and the continuum tree-valued dynamics has been given so far. The main goal of the  present paper is to present
{\sc one} Markov process, which in the following is referred to as {\sc the} \emph{pruning process}.
We shall give an analytic characterization via a Markovian generator and
provide with the so-called \emph{leaf-sampling weak vague topology}
a notion of convergence which allows to state convergence of the discrete tree-valued dynamics to the associated continuous tree-valued dynamics.

It had been a long tradition to encode trees via continuous excursions, and to use uniform topology as a notion of convergence. A more recent and conceptional approach is to
think of trees as ``tree-like'' metric spaces, the so-called $\R$-trees, and to use the Gromov-Hausdorff topology as a notion of convergence
(compare, for example, \cite{DreTer96} for an introduction into $\R$-trees and \cite{Gromov2000,EvaPitWin2006} for details on the Gromov-Hausdorff distance).
For a long time convergence of suitably rescaled Galton-Watson processes were established for very particular offspring distributions only. To be in a position
to prove an invariance principle, Aldous developed in \cite{Ald1991a,Ald1993} a notion of convergence by encoding  trees as closed subsets of $l_+^1$, the space of summable sequences of positive numbers which were additionally equipped with a sampling measure. Convergence was then proposed as the convergence of all subtrees  spanned by finite samples from the tree. Once more, this very neat and powerful idea had been generalized to the more conceptional encoding of trees as metric probability measure spaces where the tree space was equipped with the so-called \emph{Gromov-weak topology} (compare \cite{Gromov2000,GrePfaWin2009}).
Further developments which combine the Gromov-Hausdorff and Gromov-weak topology and allow for sampling measures
that are finite on bounded sets can be found, for example, in \cite{EvaWin2006,Miermont2009,AbrDelHos2013}.

In the present paper, we provide a unified framework by regarding these pruning processes as the \emph{same}
Feller-continuous Markov process on a (non locally compact) space of \Rtree s with different initial conditions,
and to establish convergence in Skorohod space whenever the initial distribution converges.
For that purpose, we introduce \emph{bi-measure $\R$-trees}, i.e., metric measure spaces $(T,r,\mu)$, which are
additionally equipped with a so-called \emph{pruning} measure, $\nu$. Here, the so-called \emph{sampling measure}
$\mu$ is a finite measure (allowing for a varying total mass), while the pruning measure is only assumed to be
finite on finite subtrees.
As the pruning measure is already part of the state, we are in a position to construct one (universal) pruning process. This process is a pure jump process which, given a bi-measure $\R$-tree, lets rain down
successively more and more cut points according to a Poisson process whose intensity measure is equal to the
pruning measure. At each cut point, the subtree above is cut off and removed, and the sampling and pruning
measures are simultaneously updated by simply restricting them to the remaining, pruned part of the tree.

A major difficulty is that important examples for the pruning measures, such as the length measure on the Brownian CRT, are not locally finite.
Therefore, we introduce with the \emph{leaf-sampling weak vague topology} a new topology on the spaces of bi-measure \Rtree s.
We give equivalent characterizations of convergence and provide convergence determining classes of functions.

\bigskip

\noindent{\bf Outline. }
The paper is organized as follows. In Section~\ref{S:topology} we introduce the leaf-sampling weak-vague topology and give a characterization of convergence.
In Section~\ref{S:pruning} we construct the pruning process, calculate its Markovian generator and verify that the law of the process on Skorohod space
depends continuously on the initial condition. Finally, in Section~\ref{sec:examples} we apply our main result
to obtain convergence of various pruning processes that appeared in the literature.

\comment{The space of bi-measure \Rtree s we have
constructed allows us to construct all the procedures using the $\sigma$-finite measure as the pruning measure.
Our main theorem says that the pruning process is a Feller process, i.e.\ its distribution depends continuously on its initial condition. In other terms, if a sequence of bi-measure \Rtree s converges in the {\rm LWV}-topology, then the sequence of associated pruning processes converges in the Skorohod space. As an example, we know that a properly normalized sequence of Galton-Watson endowed with the length measure converges to the Brownian CRT, then the discrete prunings of Aldous and Pitman and of Abraham, Delmas and He converge to the pruning process of Abraham and Serlet.}

\comment{
 Call $\mathcal G_u$ the connected subtree of $\mathcal G$ containing the root after removing the edges of $\mathcal G$ independently with probability $1-u$. They obtain a non-decreasing tree-valued Markov process $(\mathcal G_u^{AP},0\leq u \leq 1)$ in the sense that $\mathcal G_v^{AP}$ is a subtree of $\mathcal G_w^{AP}$ if $v<w$. They deal with the Poisson law because it's the only distribution such that the trees $\mathcal G_u^{AP}$ have the same law for every $0\leq u\leq 1$.

More recently, Abraham, Delmas and He \cite{AbrDelHe2012} define a non-decreasing tree-valued Markov process
$(\mathcal G_u^{ADH},0\leq u \leq 1)$ using the same kind of pruning but on the vertices of a Galton-Watson tree
with any reproduction law. They prove that for $0\leq v<w\leq 1$, conditionally on the number of leaves, the
trees $\mathcal G_v^{ADH}$ and $\mathcal G_w^{ADH}$ have the same law if and only if there exists a time $u$
such that the nodes of $\mathcal G_w^{ADH}$ with $n$ children are cut with probability $1-u^{n-1}$. This implies
that this is the good pruning on the nodes to obtain a Markovian process.

Galton-Watson trees, properly renormalized, converge (in a weak sense defined later) to a continuum random tree. Then continuous pruning procedures have been defined on continuous trees. Abraham and Delmas \cite{AbrahamDelmas2012} construct a decreasing L\'{e}vy-CRT-valued Markov process $(\mathcal T_\theta,\theta\geq 0)$ using a pruning procedure on the nodes and on the height of a L\'{e}vy  tree. They prune the infinite nodes independently using an exponential clock i.e.\ each node $s$ is cut with probability $1-e^{-\theta \Delta_s}$ where $\Delta_s$ is the ``mass'' of the node $s$. The lineage of each individual $s$ of the tree is cut at the atoms of a Poisson process with parameter proportional to $\theta H_s$ where $H_s$ is the distance from $s$ to the root given by the height process. They use a L\'{e}vy  snake to have a pruning procedure adapted to the genealogical property of the tree.

Although the process $(\mathcal T_\theta,\theta\geq0)$ is the continuous analogue of $\mathcal G^{ADH}$ and $\mathcal G^{AP}$ (or maybe both of them), no link between them is actually proved. In this paper, we focus on the discrete pruning $\mathcal G^{ADH}$ on nodes and its convergence toward a continuous one.
}

\section{Bi-measure $\mathbb R$-trees and the LWV-topology}
\label{S:topology}
In this section we introduce the space of $\mathbb{R}$-trees equipped with a finite \emph{sampling measure} and
a \emph{pruning measure} which is assumed to be finite on finite subtrees. Moreover, we define the {\em leaf-sampling
weak vague topology} (LWV-topology) on this space of bi-measure $\R$-trees.
The idea behind our topology is to first sample a finite number of points from the tree according to the
sampling measure. These points span a finite subtree. In many relevant examples they are actually the leaves of this subtree.
Then we equip this finite subtree with the restriction of the pruning
measure and obtain a random metric measure tree.
For convergence of bi-measure trees, we require that these random metric
measure trees converge together with the sampled points as \emph{$n$-pointed metric measure \Rtree s} in the
Gromov-weak topology.

We therefore recall in Subsection~\ref{Sub:Gw} the notion of Gromov-weak topology on metric measure spaces and extend it to the \emph{$n$-pointed Gromov-weak topology}.
In Subsection~\ref{sec:measureRtree} we then define a stronger topology on $n$-pointed metric measure \Rtree s,
the subtree Gromov-weak topology. Finally, in Subsection~\ref{subsec:LWV-topo} we define the \LWVconv.
It turns out that it can be characterized by both the pointed as well as the subtree
Gromov-weak convergence of samples from the bi-measure $\R$-tree and defines a separable, metrizable topology.
In Subsection~\ref{sec:convdet}, we introduce classes of test functions that induce the \LWVtopo. One of
them turns out to be convergence determining. Using these test functions, we derive several convergence results.

\subsection{The $n$-pointed Gromov-weak topology}
\label{Sub:Gw}
Greven, Pfaffelhuber and Winter~\cite{GrePfaWin2009} define the space of metric probability measure spaces
equipped with the Gromov-weak topology. In this subsection, we define a slightly more general space using finite
measures instead of probability measures and considering $n$-pointed metric measure spaces. We do not give
proofs, because the extension is straightforward.

We start recalling basic notation. As usual, given a topological space $X$, we denote by $\C(X)$ ($\Cb(X)$) the space of (bounded)
continuous, $\R$-valued functions on $X$, and by ${\mathcal M}_1(X)$ ($\mathcal M_f(X)$) the
space of probability (finite) measures, defined on the Borel $\sigma$\nbd algebra of $X$. For $x\in X$,
$\delta_x\in\CM_1(X)$ is the Dirac measure in the point $x$. ``$\Rightarrow$'' means weak convergence in $\mathcal M_1(X)$
or in $\mathcal M_f(X)$. Recall that the support of $\mu$, $\supp(\mu)$,
is the smallest closed set $X_0\subseteq X$ such that $\mu(X_0)=\mu(X)=:\|\mu\|$. For $\mu\in \mathcal M_f(X)$,
we denote the normalization by
\begin{equation}
\label{e:sampling}
   \mu^\circ
 :=
    \tfrac{\mu }{ \|\mu\|}\in \mathcal M_1(X).
\end{equation}

The {\em push forward} of $\mu$ under a measurable map $\phi$ from $X$ into another topological space
$Z$ is the finite measure $\phi_\ast\mu\in{\mathcal M}_f(Z)$ defined by
\begin{equation}
\label{push}
  \phi_\ast\mu(A)
 :=
   \mu\bigl(\phi^{-1}(A)\bigr),
\end{equation}
for all measurable subsets $A\subseteq Z$. For the integral of an integrable function $\vphi$ with respect to
$\mu$, we sometimes use the notation
\begin{equation}
	\langle\mu, \vphi\rangle := \plainint\vphi\mu.
\end{equation}

A \emph{metric measure space} is a triple $(X,r,\mu)$, where $(X,r)$ is a metric space such that
$\(\supp(\mu),r\)$ is complete and separable and $\mu\in{\mathcal M}_f(X)$ is a finite measure on $(X,{\mathcal
B}(X))$. If $\supp(\mu)$ is separable but not complete, we simply identify it with its completion.

Branching trees such as Galton-Watson trees and the CRT are often rooted. We therefore define a \emph{rooted
metric measure space} $(X,r,\rho,\mu)$ as a metric measure space $(X, r, \mu)$ together with a distinguished
point $\rho\in X$ which is referred to as the \emph{root}.
To avoid heavy notations, in the following we suppress the metric and the root, i.e.\ we abbreviate, for example,
\begin{equation}
\label{e:leightnotation}
  X=(X,r,\rho),	\qquad (X,\mu)=(X,r,\rho,\mu).
\end{equation}
The definition of metric measure spaces given in \cite{GrePfaWin2009} can easily be extended to rooted metric
measure spaces.
In the context of metric spaces, rooted spaces are often referred to as \emph{pointed spaces} (compare, for example, Section~8 in \cite{BurBurIva01}).

We want to extend these rooted metric measure spaces $(X,r,\mu)$ by fixing $n$ additional points
$u_1,\ldots,u_n\in X$, and call $(X,r,\rho,(u_1,\ldots,u_n),\mu)$ a (rooted)
\emph{$n$-pointed metric measure space}. The support of an $n$-pointed metric measure space
$(X,r,\rho,(u_1,\ldots,u_n),\mu)$ is defined by
\begin{equation}
\label{e:npointsupp}
   \supp\bigl((X,r,\rho,(u_1,\ldots,u_n),\mu)\bigr) := \supp(\mu)\cup\{\rho,u_1,\ldots,u_n\}.
\end{equation}

In the following we identify two $n$-pointed metric measure spaces if there is a measure preserving isometry between their
supports that also preserves the root and the fixed points.

\begin{definition}[The space $\Mn$]
Two $n$-pointed metric measure spaces $\smallx=(X,r,\rho, (u_1,\ldots ,u_n),\mu)$ and\/
$\smallx'=(X',r',\rho', (u_1',\ldots ,u_n'),\mu')$ are called \emph{equivalent} if there exists an isometry $\phi$ between
$\supp(\smallx)$ and\/ $\supp(\smallx')$ such that\/ $\phi_\ast\mu=\mu'$, $\phi(\rho)=\rho'$ and\/ $\phi(u_k)=u_k'$
for all\/ $1\leq k\leq n$. It is clear that this defines an equivalence relation.

We denote by\/ $\Mn$ the set of equivalence classes of\/ $n$-pointed metric measure spaces.
\label{Def:001}
\end{definition}\sm

\begin{remark} Notice that for  a notion of  equivalence of metric measure spaces $(X,r,\mu)$ and $(X',r',\mu')$ there are two canonical choices.
Either we insist that the metric spaces $(X,r)$ and $(X',r')$ are isometric or we are satisfied with their supports
to be isometric thereby neglecting sets of measure zero (compare, for example, \cite[Section~27]{Villani2009}).
Here we take the second approach which  allows for a characterization of convergence in $\mathbb{M}_n$
through convergence determining classes of functions. The gap between such a notion of (weak) convergence and a
stronger topology which also requires the convergence of supports of the measures is closed in \cite{AthreyaLoehrWinter}. 
\end{remark}\sm

To simplify notations, we do not distinguish between an $n$-pointed metric measure space and its equivalence
class. That is, we write
\begin{equation}
\label{e:leightsmallx}
  \smallx=(X,(x_1,\ldots,x_n),\mu)\in \Mn.
\end{equation}

\begin{remark}[The space $\M_0$]
	$\Mn[0]$ is the usual space of rooted metric measure spaces (with finite measures).
\label{Rem:001}
\end{remark}\sm

For a rooted metric space $X$, we define a map $R^X$ that associates to a sequence of points the matrix of their
distances to the root and to each other, i.e.,
\begin{equation}\label{mono1a}
     R^X\colon
     \left\{\begin{array}{ccl}X^{\mathbb N} &\to& \mathbb R_+^{\binom{\mathbb N_0}{2}} \\
     (x_i)_{i\geq 1} &\mapsto& \bigl(r(x_i, x_j)\bigr)_{0\leq i<j} \text{ with }x_0:=\rho.  \end{array}\right.
\end{equation}
The \emph{distance matrix distribution} of an $n$-pointed metric measure space $\smallx=(X,(u_1,\ldots,u_n),\mu)$
is then given by
\begin{equation}
\label{fct:rootedR}
   \bnu^{\smallx}
 :=
   \|\mu\|\cdot \bigl(R^X\bigr)_\ast\Bigl(\bigotimes_{k=1}^n \delta_{u_k} \otimes (\mu^\circ)^{\otimes\N}
   \Bigr)\in{\mathcal M}_f(\mathbb R_+^{\binom{\N_0}{2}}),
\end{equation}
which obviously depends only on the equivalence class. Vershik's proof of Gromov's reconstruction theorem
(see \cite[$3\frac12.7$]{Gromov2000}) directly carries over to $n$-pointed metric measure spaces.
Therefore, $\smallx\in\Mn$ is uniquely determined by its distance matrix distribution $\bnu^\smallx$.

\begin{definition}[\pGwtopo]
A sequence of $n$-pointed metric measure spaces $\smallx_N\in \Mn$ converges $n$-pointed Gromov-weakly (\rm pGw)
to $\smallx\in\Mn$ if
\begin{equation}
\label{e:pGw}
	\bnu^{\smallx_N}\underset{N\rightarrow \infty}{\Longrightarrow} \bnu^{\smallx}
\end{equation}
in the weak topology on $\mathcal M_f\(\mathbb R_+^{\binom{\mathbb N_0}{2}}\)$.
\label{Def:002}
\end{definition}\sm

We see directly from the definition that functions of the form $\Phi\colon \Mn\to\R,\, \smallx\mapsto
\langle\bnux,f\rangle$ with $f\in\Cb\(\dist\)$ are continuous. If $f$ depends only on finitely many coordinates,
$\Phi$ is called a \emph{polynomial}, and there exists $m\in\N$, $\vphi\in\Cb\(\distnm\)$ such that for
$\smallx=(X, \uvec, \mu)\in\Mn$,
\begin{equation}
\label{e:polynomial}
	\Phi(\smallx)=\Phi^{m,\varphi}(\smallx)
 :=
    \int_{X^m}\mu^{\otimes m}(\mathrm{d}\vvec)\,\varphi\bigl( R^X(\uvec,\vvec)\bigr),
\end{equation}
where $\uvec=(u_1,...,u_n)$, $\vvec=(v_1,...,v_m)$ and $(\uvec,\vvec):=(u_1,\ldots,u_n,v_1,\ldots,v_m)$.
Note that
\begin{equation} \label{e:phi1}
   \Phi^{m,1}(\smallx)=\|\mu\|^m,
\end{equation}
and, in particular, $\Phi^{0,1}\equiv 1$.
Moreover, as polynomials are not bounded (compare~(\ref{e:phi1})), we define a class $\Pin\subseteq \Cb(\Mn)$ of
bounded test functions by
\begin{equation} \label{e:Pi}
\begin{aligned}
	\Pin:=\big\{&\Phipol(\smallx):=\gamma(\|\mu\|)\cdot\Phi^{m,\vphi}\bigl(\smallx\bigr): \\
		&\Phi^{m,\vphi}\text{ is a polynomial},\, \gamma\in \Cb(\R_+),\,
	    \lim_{x\rightarrow \infty} x^k \gamma(x)=0,\,\forall k\in\N\big\}.
\end{aligned}
\end{equation}

Recall the {\em Prohorov distance} $d_{\rm Pr}$ between two finite measures $\mu,\nu$ on a metric space $(Z,d)$,
\begin{equation}
\label{e:dPr}
\begin{aligned}
   &d^{(Z,d)}_{\rm Pr}(\mu,\nu)
   \\
 &:=
   \inf\bigl\{\varepsilon>0:\;\mu(A^\varepsilon)+\varepsilon\geq\nu(A),\;\nu(A^\varepsilon)+\varepsilon \geq
   \mu(A)\, \forall A \text{ closed}\bigr\},
\end{aligned}
\end{equation}
where $A^\varepsilon:=\{x\in Z \mid d(x,A) < \varepsilon\}$.

\begin{definition}[$n$-pointed Gromov-Prohorov distance]
We define the $n$-pointed Gromov-Prohorov distance between $\smallx=(X,\uvec,\mu)$ and $\smally=(Y,\vvec,\nu)$ in $\Mn$ by
\begin{equation}
\label{e:GPdistance}
	d_{\rm pGP}\bigl(\smallx,\smally\bigr)
 :=
    \inf_{d}\bigl\{ d^{(X\uplus Y,d)}_{\rm Pr}(\mu,\nu)+d(\rho_X,\rho_Y) + \sum_{k=1}^n d(u_k,v_k)\bigr\},
\end{equation}
where the infimum is taken over all metrics $d$ on the disjoint union $X\uplus Y$ that extends $r_X$ and $r_Y$. If there is no confusion, we simply write $d_{\rm Pr}$ for $d^{(X\uplus Y,d)}_{\rm Pr}$.
\label{def:pGP}
\end{definition}\sm

Recall that a set $\F\subseteq \Cb(X)$ is \emph{convergence determining} (on the topological space $X$) if, for
probability measures $\mu_N, \mu$ on $X$, the weak convergence $\mu_N\TNo \mu$ is equivalent to
$\plainint{f}{\mu_N}\toN\plainint{f}{\mu}$ for all $f\in\F$.

\begin{proposition}[$\Pin$ is convergence determining]
Let $\smallx,\smallx_1,\smallx_2,\ldots\in \Mn$. The following conditions are equivalent:
\begin{enumerate}
	\item $\smallx_N \topGw \smallx$, as $N\to\infty$.
	\item $\Phi(\smallx_N) \tNo \Phi(\smallx)$, for all polynomials $\Phi$.
	\item $d_{\rm pGP}(\smallx_N,\smallx) \tNo 0$.
\end{enumerate}
Furthermore, $\Mn$ is separable, $d_{\rm pGP}$ is a complete metric on $\Mn$, and the class\/ $\Pin\subseteq
\Cb(\Mn)$ is convergence determining on $\Mn$.
\label{prop:pGwcharacterizations}
\end{proposition}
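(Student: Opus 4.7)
The proposition is the $n$-pointed, finite-measure analogue of the Gromov-weak theory of \cite{GrePfaWin2009}, and the route I would follow closely mirrors theirs. The two generalizations interact harmlessly: the marked points $(u_1,\ldots,u_n)$ are absorbed into the distance matrix distribution $\bnux$ by sampling $\bigotimes_{k=1}^n\delta_{u_k}$ before the i.i.d.\ $\mu^\circ$\nbd samples, while the total mass $\|\mu\|$ appears as a continuous scalar prefactor of $\bnux$ that is itself stable under each of the three candidate notions of convergence. Vershik's reconstruction noted before Definition~\ref{Def:002} guarantees that $\smallx\leftrightarrow\bnux$ is a bijection, so that statements in $\Mn$ can be pulled back from statements in $\mathcal M_f(\dist)$.

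\emph{Step (i)$\Leftrightarrow$(ii).} Every polynomial $\Phi^{m,\vphi}(\smallx)$ equals $\|\mu\|^{m-1}\langle\bnux,\tilde\vphi\rangle$ for some $\tilde\vphi\in\Cb(\dist)$ depending on only finitely many coordinates, and both $\|\mu\|$ and $\langle\bnux,\tilde\vphi\rangle$ are continuous with respect to weak convergence in $\mathcal M_f(\dist)$; this gives (i)$\Rightarrow$(ii). For the converse, $\Phi^{1,1}(\smallx_N)=\|\mu_N\|\to\|\mu\|$ yields tightness of $(\bnu^{\smallx_N})_N$, while convergence against all other polynomials identifies the finite-dimensional weak limits of the normalizations. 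Since bounded continuous cylinder functions form a point-separating algebra containing the constants on the Polish space $\dist$, this is enough to conclude $\bnu^{\smallx_N}\Rightarrow\bnux$, i.e.\ (ii)$\Rightarrow$(i).

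\emph{Step (i)$\Leftrightarrow$(iii) and main obstacle.} The direction (iii)$\Rightarrow$(i) is the easy one: pick an admissible metric $d$ on $X_N\uplus X$ that almost realizes $d_{\rm pGP}(\smallx_N,\smallx)$ up to $\varepsilon_N\to 0$; the Prohorov bound provides a coupling of $\mu_N^\circ$ with $\mu^\circ$ such that paired samples lie within $\varepsilon_N$ of each other off an event of mass $\varepsilon_N$; pushing the coupled i.i.d.\ sequence through $R$ yields weak convergence of $\bnu^{\smallx_N}$. The converse (i)$\Rightarrow$(iii) is the technical heart and the main obstacle: Skorohod representation applied to $\bnu^{\smallx_N}\Rightarrow\bnux$ produces, on a single probability space, almost-sure convergence of all sampled distance matrices; from these matrices one must assemble explicit metrics on $X_N\uplus X$ keeping the root, the $n$ marked points, and typical sampled pairs simultaneously within a common $\varepsilon$. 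This is the construction carried out for $n=0$ and probability measures in \cite{GrePfaWin2009}; the additional terms $d(u_k^N,u_k)$ and the varying total mass $\|\mu\|$ fit into the same estimates without structural change.

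\emph{Convergence determining, separability, completeness.} Polynomials themselves are unbounded because of the $\|\mu\|^m$-prefactor, but multiplication by $\gamma\in\Cb(\R_+)$ of super-polynomial decay at infinity gives the bounded class $\Pin$ without destroying point-separation. Given the equivalence (i)$\Leftrightarrow$(ii), a standard tightness and Stone-Weierstrass argument shows that $\Pin$ already detects weak convergence of probability measures on $\Mn$, so $\Pin$ is convergence determining on $\Mn$. Separability of $(\Mn,d_{\rm pGP})$ follows from the density of representatives with finite support, rational distances and rational masses; completeness can be deduced either from the embedding $\smallx\mapsto\bnux$ into the Polish space $\mathcal M_f(\dist)$ together with the reconstruction theorem, or by a direct Cauchy argument along the lines of the $n=0$ case.
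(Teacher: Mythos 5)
Your treatment of the three equivalences and of separability/completeness is essentially the paper's: the paper also disposes of (i)--(iii) as ``an obvious modification of Theorem~5 in \cite{GrePfaWin2009}'' (marked points absorbed into $\bnux$, total mass recovered from $\Phi^{1,1}$), and refers completeness and separability to Proposition~5.6 there, which is your ``direct Cauchy argument along the lines of the $n=0$ case''. One caveat on your alternative completeness route: deducing completeness of $d_{\rm pGP}$ from the embedding $\smallx\mapsto\bnux$ into $\mathcal M_f(\dist)$ would require knowing that the image is closed (and would in any case only give topological completeness, not completeness of the specific metric $d_{\rm pGP}$), so the direct argument is the one that actually works.

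The genuine gap is in the last claim, that $\Pin$ is convergence determining on $\Mn$. ``Convergence determining'' here is a statement about arbitrary probability measures $P_N,P$ on $\Mn$: convergence of $\int\Phi\,\mathrm{d}P_N$ for all $\Phi\in\Pin$ must force $P_N\Rightarrow P$, with no tightness hypothesis. Your ``standard tightness and Stone--Weierstrass argument'' presupposes exactly the tightness that is not available: $\Mn$ is not locally compact, and tightness of $(P_N)$ cannot be read off from convergence of $\Pin$-integrals in any routine way; moreover, a family of bounded continuous functions that separates points, or even induces the topology, need not be convergence determining in general, so point-separation plus the equivalence (i)$\Leftrightarrow$(ii) does not close the argument. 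The paper's proof avoids tightness altogether: it first observes that $\Phi^{\gamma,0,1}\in\Pin$ with $\gamma(x)=e^{-x}$ recovers convergence of $\|\mu_N\|$, so that $\Pin$ induces the same (separable, metrizable) topology as the polynomials, and then uses that $\Pin$ is \emph{multiplicatively closed} (products of polynomials are polynomials and products of admissible damping factors $\gamma$ are again admissible) to invoke Le~Cam's theorem \cite{LeCam57}, \cite[Lem.~4.1]{HoffmannJorgensen1977} (equivalently the argument of \cite{DGP:mmmspace,Loehr:Gromovmetric}). Your proposal neither verifies multiplicative closedness nor appeals to such a result, so this step needs to be repaired; the repair is short, but it is a different mechanism than the one you describe.
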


\begin{proof}
The proof of the equivalences is an obvious modification of that of Theorem 5 in \cite{GrePfaWin2009}. Notice
that $\mu^\circ$ in the definition of the \pGwtopo\ can be replaced by $\mu$ in the definition of polynomials because
$\|\mu\|=\Phi^{1,1}(\smallx)$. Separability and completeness follow in the same way as Proposition~5.6 in \cite{GrePfaWin2009}.

To see that $\Pin$ induces the \pGwtopo, note that $\Phi^{\gamma,0,1}\in\Pin$, and convergence of
$\Phi^{\gamma,0,1}(\smallx_N)=\gamma\(\|\mu_N\|\)$ with $\gamma(x)=e^{-x}$ implies the convergence of $\|\mu_N\|$.
Hence, the topology induced by $\Pin$ coincides with the topology induced by the polynomials. Using the fact
that $\Pin$ is multiplicatively closed, we see that it is convergence determining with the same proof as
for the set of polynomials on the space of metric probability measure spaces (see
\cite{DGP:mmmspace,Loehr:Gromovmetric}), or directly from Le~Cam's theorem (see \cite{LeCam57}, \cite[Lem.~4.1]{HoffmannJorgensen1977}).
\end{proof}\sm

\subsection{Measure $\R$-trees and subtree Gromov-weak topology}
\label{sec:measureRtree}
In this subsection we define the \emph{subtree Gromov-weak topology}. As ``tree-like'' metric spaces are $0$-hyperbolic,
throughout the paper we work with the subspaces
\begin{equation}
\label{w:002}
\begin{aligned}
  \Hn := \bigl\{\smallx\in\Mn:\,\smallx\text{ is $0$-hyperbolic}\bigr\}\subseteq\Mn,
\end{aligned}
\end{equation}
and
\begin{equation}
\label{w:002a}
\begin{aligned}
  \H:=\Hn[0]\subseteq\Mn[0],
\end{aligned}
\end{equation}
where a metric measure space $\smallx\in \Mn$ is called \emph{$0$\nbd hyperbolic} iff
\begin{equation}
\label{grev30}
    r(x_1,x_2)+r(x_3,x_4) \leq  \max \{r(x_1,x_3)+r(x_2,x_4),r(x_1,x_4)+ r(x_2,x_3) \},
\end{equation}
for all $x_1, x_2, x_3, x_4 \in \supp(\smallx)$.
It follows immediately from Theorem~2.5 in \cite{EvaPitWin2006} that for each $n\in\mathbb{N}$, $(\Hn,d_{\rm pGP})$ is complete.

Recall that a $0$\nbd hyperbolic space is called an \emph{\Rtree} if it is connected (see \cite{DreMouTer96}
for equivalent definitions and background on \Rtrees).
Given a (rooted) \Rtree\ $(T,r,\rho)$, we denote the unique path between two points $x,y\in T$ by $[x,y]$, and
${[x,y[}:=[x,y]\setminus\{y\}$.  The set of {\em leaves} of the tree is
\begin{equation}
\label{e:leaves}
   {\rm Lf}(T)
 :=
   T\setminus \bigcup_{x\in T} {[\rho,x[}.
\end{equation}
We also use the notation $\rspan{\vvec}$ for the tree spanned by the root $\rho$ and the vector $\vvec\in T^n$, i.e.,
\begin{equation}
\label{e:treespanned}
   \rspan{\vvec}:=\bigcup_{i=1}^n[ \rho,v_i ].
\end{equation}

Here and in the following we refer to any $\R$-tree of the form (\ref{e:treespanned}) as a {\em finite tree}.

\begin{remark}[$0$-hyperbolic spaces are equivalent to \Rtrees]
	According to Theorem~3.38 of \cite{EV2007}, every $0$\nbd hyperbolic space can be isometrically embedded
	into an \Rtree. Since our notion of equivalence of two $n$-pointed metric measure spaces $\smallx$
	and $\smallx'$ requires only a (measure and point preserving) isometry between $\supp(\smallx)$ and
	$\supp(\smallx')$, this means that every $n$-pointed $0$-hyperbolic metric measure space is equivalent
	to an $n$-pointed, measured \Rtree.
	In the following we assume, without loss of generality, that $\smallx\in \Hn$ is an $\R$-tree, by
	choosing a connected representative of the equivalence class.

	Also note that, given two \Rtrees\ $(T,r)$, $(T', r')$ with subsets $A\subseteq T$, $A'\subseteq T'$,
	and an isometry $\phi\colon A \to A'$, there is a unique extension of $\phi$ to an isometry between the generated \Rtrees,
	$\overline{\phi}\colon \rspan{A} \to \rspan{A'}$. Indeed, for $v\in \rspan{A}$ there exist (non-unique)
	$x,y\in A$ with $v\in[x,y]$, and a unique $w_v\in [\phi(x),\phi(y)]$ with $r(x,v)=r'\(\phi(x),
	w_v\)$. It is straightforward to check that $w_v$ does not depend on the choice of $x,y$ and
	$\overline{\phi}(v) := w_v$ is an isometry.
	In particular, for $\smallx\in\Hn$, the \Rtree\ $\rspan{\supp(\smallx)}$ is unique up to isometry.
\label{rem:000}
\end{remark}

We now define a topology on $\Hn$ which requires that every subtree generated by a subset of the $n$
distinguished points converges.
For that purpose, we define a projection map which sends a list $\uvec$ to the
sublist indexed by $I=\{i_1,...,i_k\}$
for given $1\leq i_1<\dots <i_k\leq n$. That is,
\begin{equation}
\label{e:piIn}
   \pi_I^n\colon \left\{\begin{matrix}T^n &\to& T^k\\ \uvec&\mapsto&(u_{i_1},\ldots,u_{i_k})\end{matrix}\right..
\end{equation}
The sublist $(u_1,\dots,u_k)$ of $\uvec\in T^n$ is simply denoted $\uvec^k$. With a slight abuse of notation, we also write
\begin{equation}
\label{e:piInsmallx}
   \pi_I^n(T,\uvec,\mu)
   := \bigl(\rspan{\pi^n_I(\uvec)},\pi^n_I(\uvec),\mu\bigr)
   := \bigl(\rspan{\pi^n_I(\uvec)},\pi^n_I(\uvec),\mu\restricted{\rspan{\pi^n_I(\uvec)}}\bigr),
\end{equation}
where the measure $\mu$ in the middle expression is tacitly understood to be restricted to the appropriate
space, $\rspan{\pi^n_I(\uvec)}$.

\begin{definition}[\sGwtopo]
Consider $n$-pointed measure\/ \Rtree s $\smallx$, $\smallx_1,\smallx_2,\ldots\in\Hn$.
We say that\/ $(\smallx_N)_{N\in\mathbb{N}}$ converges subtree Gromov-weakly ({\rm sGw}) to
$\smallx$ iff\/ $\smallx_N \topGwN \smallx$ and
\label{def:sGw}
\begin{equation}
\label{e:sGw}
	\pi_I^n(\smallx_N) \topGwN \pi_I^n(\smallx),\; \quad \forall I\subseteq \{1,...,n\}.
\end{equation}
\end{definition}\sm

Put
\begin{equation}
\label{e:032}
   \Ht:=\big\{(T, \uvec, \mu)\in \Hn:\,\supp(\mu)\subseteq \rspanu\big\}\subseteq \Hn,
\end{equation}
and note that $\Ht$ consists only of finite trees with at most $n$ leaves.

\begin{remark}[Related topologies]
	The \sGwtopo\ is strictly stronger than the \pGwtopo. On $\Ht$, \sGwconv\ implies measured
	Gromov-Hausdorff convergence (\cite{Fukaya1987}), also known as weighted Gromov-Hausdorff convergence
	(\cite{EvaWin2006,Miermont2009}).
\label{Rem:002}
\end{remark}\sm

\begin{lemma}[Sufficient condition for \sGwconv]
Consider random\/ $n$-pointed measure \Rtree s $\smallx=(T,\uvec,\mu),\, \smallx_N=(T_N,\uvec_N,\mu_N)\in \Ht$,
$N\in\N$ (in particular $T_N=\rspan{\uvec_N}$).
Assume that $(\smallx_N)_{N\in\mathbb{N}}$ converges almost surely (a.s.) to $\smallx$ in the $n$-pointed
Gromov-weak topology, as $N\to\infty$.
Furthermore, assume that there is a strictly increasing function $\psi\colon\R_+ \to \R_+$ such
that $\psi\(\|\mu\|\)$ is integrable and
\begin{equation} \label{e:cond(i)}
   \E\Bigl[\psi\Bigl(\mu_N\(\rspan{\pi_I^n(\uvec_N)}\)\Bigr)\Bigr] \tNo
   	\E\Bigl[\psi\Bigl(\mu\(\rspan{\pi_I^n(\uvec)}\) \Bigr)\Bigr], \;\; \forall I\subseteq\{1,...,n\}.
\end{equation}
\label{lem:lemma42}
Then $(\smallx_N)_{N\in\mathbb{N}}$ converges also subtree Gromov-weakly to $\smallx$, a.s., as $N\to\infty$.
\end{lemma}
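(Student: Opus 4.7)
The plan is to verify the defining condition of \sGwconv\ directly: for each $I \subseteq \{1,\ldots,n\}$, show that $\pi_I^n(\smallx_N) \topGw \pi_I^n(\smallx)$ a.s. By Proposition~\ref{prop:pGwcharacterizations} applied in $\Mn[|I|]$, this reduces to a.s.\ convergence of every polynomial $\Phi^{m,\vphi}$ evaluated at the projections.

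First, I would recast such a polynomial as a limit of ordinary polynomials on $\Hn$. Writing
\[
\Phi^{m,\vphi}\bigl(\pi_I^n(\smallx)\bigr) = \int_{T^m} \vphi\bigl(R^T(\uvec^I,\vvec)\bigr) \prod_{j=1}^m \mathds{1}_{v_j\in\rspan{\uvec^I}}\,\mu^{\otimes m}(d\vvec),
\]
the only obstruction to interpreting this as a polynomial in the sense of~(\ref{e:polynomial}) is the indicator $\mathds{1}_{v\in\rspan{\uvec^I}}$. Inside an $\R$-tree, $d(v,\rspan{\uvec^I}) = \min_{i\in I} d(v,[\rho,u_i])$ is a continuous function of the distances $r(\rho,v), r(u_i,v), r(\rho,u_i)$, so the indicator is the pointwise decreasing limit as $\eps\downarrow 0$ of the continuous approximants $g_\eps(v;\uvec^I) := \max\{0,\, 1 - d(v,\rspan{\uvec^I})/\eps\}$. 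Substituting $g_\eps$ for the indicator yields an honest polynomial on $\Hn$, to which the a.s.\ convergence $\smallx_N \topGw \smallx$ applies. For $\vphi \geq 0$, letting $\eps\downarrow 0$ and invoking dominated convergence on the $\mu$-integral then gives the one-sided a.s.\ bound $\limsup_{N\to\infty} \Phi^{m,\vphi}(\pi_I^n(\smallx_N)) \leq \Phi^{m,\vphi}(\pi_I^n(\smallx))$.

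The crux is to promote this $\limsup$-bound to an equality, which is where~(\ref{e:cond(i)}) enters. Specialising to $\vphi\equiv 1$ reduces the task to showing $X_N^I \to X^I$ a.s., where $X_N^I := \mu_N(\rspan{\pi_I^n(\uvec_N)})$ and $X^I := \mu(\rspan{\pi_I^n(\uvec)})$; at this point we already have $\limsup_N X_N^I \leq X^I$ a.s. Applying~(\ref{e:cond(i)}) with $I_0 = \{1,\ldots,n\}$ (which, since $\smallx,\smallx_N\in\Ht$, amounts to convergence of $\E[\psi(\|\mu_N\|)]$) together with $\|\mu_N\|\to\|\mu\|$ a.s.\ (a polynomial) and Scheff\'e's lemma yields $\psi(\|\mu_N\|) \to \psi(\|\mu\|)$ in $L^1$. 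A Fatou argument applied to the non-negative sequence $\psi(\|\mu_N\|) - \psi(X_N^I)$ then gives $\E[\psi(\limsup_N X_N^I)] \geq \E[\psi(X^I)]$, which combined with $\psi(\limsup_N X_N^I) \leq \psi(X^I)$ a.s.\ and the strict monotonicity of $\psi$ forces $\limsup_N X_N^I = X^I$ a.s. The same Fatou machinery delivers $\psi(X_N^I) \to \psi(X^I)$ in $L^1$, and combined with the a.s.\ $\limsup$-bound on every polynomial (and a subsequence-extraction to absorb the passage from $L^1$ to a.s.), this yields the required a.s.\ convergence $\Phi^{m,\vphi}(\pi_I^n(\smallx_N)) \to \Phi^{m,\vphi}(\pi_I^n(\smallx))$ for every $\vphi$ and every $I$.

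The main obstacle I expect is this bridge between the a.s.\ $\limsup$-bound (supplied by pGw convergence through the $g_\eps$-approximation) and the expectation-level mass conservation~(\ref{e:cond(i)}): strict monotonicity of $\psi$ is what converts an in-expectation equality to a pointwise one, and the dominant $\psi(\|\mu_N\|)$—available precisely because~(\ref{e:cond(i)}) covers the full index set $I_0$—is what supplies the uniform integrability needed to run the reverse-Fatou step.
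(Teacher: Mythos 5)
Your overall strategy is sound and genuinely different in technique from the paper's. The paper builds the near-isometries $f_N\colon T_N\to T$ of Remark~\ref{Rem:003}, couples everything inside one metric space, and applies the Portmanteau theorem to the push-forwards $(f_N)_\ast\mu_N$ and the closed limit subtree $S=\rspan{\pi_I^n(\uvec)}$, finishing with the triangle inequality \eqref{e:003} for $d_{\rm pGP}$. You instead stay entirely at the level of distance-matrix test functions: since $d\bigl(v,\rspan{\uvec^I}\bigr)=\min_{i\in I}\tfrac12\bigl(r(\rho,v)+r(u_i,v)-r(\rho,u_i)\bigr)$ is a continuous function of the distance matrix, your $g_\eps$-approximation of the indicator is an honest polynomial on $\Hn$, and it reproduces the paper's Portmanteau upper bound $\limsup_N\Phi^{m,\vphi}(\pi_I^n(\smallx_N))\le\Phi^{m,\vphi}(\pi_I^n(\smallx))$ for $\vphi\ge0$ without ever constructing $f_N$. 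Your use of \eqref{e:cond(i)} with $I_0=\{1,\dots,n\}$ (so that the corresponding mass is $\|\mu_N\|$, because $\smallx_N\in\Ht$) to produce the dominating sequence, and the Fatou argument yielding $\limsup_N X_N^I=X^I$ a.s., play exactly the role of \eqref{e:pointwiseineq}--\eqref{e:002} in the paper; and once the masses $X_N^I$ converge, the $c-\vphi$ trick does assemble pGw convergence of all projections, hence sGw convergence. So both proofs share the same skeleton (upper semicontinuity from closedness, plus mass conservation from the expectation hypothesis), reached by different means; your route avoids the explicit almost-isometries, the paper's route gets the convergence of the restricted measures directly on a common space.

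The weak point is the final step. Your machinery yields $\limsup_N X_N^I=X^I$ a.s.\ and $\psi(X_N^I)\to\psi(X^I)$ in $L^1$, but the proposed ``subsequence-extraction'' upgrades this only to convergence in probability of the polynomials: a.s.\ convergence along a subsequence (even a common one, by diagonalisation) is strictly weaker than the a.s.\ convergence of the full sequence that the lemma asserts, and no diagonal argument recovers the full sequence, because the liminf of $X_N^I$ is not controlled pathwise by an expectation-level hypothesis such as \eqref{e:cond(i)}. You have in fact isolated precisely the delicate point of the paper's own proof, which at the corresponding place passes from \eqref{e:pointwiseineq} and \eqref{e:002} directly to the a.s.\ statement $\psi\bigl((f_N)_\ast\mu_N(S)\bigr)\to\psi(\mu(S))$; the reverse-Fatou step justifies the limsup half of that claim, and your write-up should either state the conclusion your argument actually delivers (convergence in probability, which would suffice for the way the lemma is used in the proof of Proposition~\ref{prop:equivconvergence}) or supply an additional argument for the liminf rather than appeal to subsequences. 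A minor gloss you share with the paper: Scheff\'e and the interchange of $\psi$ with limits use continuity of $\psi$, which strict monotonicity alone does not guarantee; this is harmless if $\psi$ is taken continuous, as in the applications.
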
\sm

To prepare the proof, we state the following:
\begin{remark}[pGw-convergence yields a tree homomorphism]
Consider a sequence of $n$-pointed measure \Rtree s $\smallx=(T,\uvec,\mu)$, $\smallx_1=(T_1,\uvec_1,\mu_1)$,
$\smallx_2=(T_2,\uvec_2,\mu_2),\ldots \in \Ht$. Assume furthermore that $(\smallx_N)_{N\in\mathbb{N}}$ converges $n$-pointed
Gromov-weakly to $\smallx$, a.s., as $N\to\infty$.

For sufficiently large $N\in\mathbb{N}$, we can  define a function $f_N\colon T_N\to T$ by sending the root to the root, letting $f_N(u_{N,k})=u_k$ and $f_N(u_{N,k}\wedge u_{N,l})=u_k\wedge u_l$, $k,l=1,...,n$,
and then stretching linearly. Here, as usual, $u\wedge v$ denotes the unique branch point such that $[\rho,u\wedge v]=[\rho,u]\cap[\rho,v]$.

By construction,
$\dis(f_N) \tNo 0$ where
\begin{equation}
\label{e:dis}
   \dis(f)
 :=
    \sup_{x,y\in T} \bigl|r(x,y)-r'(f(x),f(y))\bigr|
\end{equation}
denotes the {\em distortion} of a map $f:(T,r)\to(T',r')$.
\label{Rem:003}
\end{remark}\sm

\begin{proof}[Proof of Lemma~\ref{lem:lemma42}]
Assume that $N$ is large enough, such that the function $f_N\colon T_N\to T$
from Remark~\ref{Rem:003}  is a tree homomorphism with $f_N(u_{N,k})=u_k$, $k\in\{1,2,\ldots,n\}$
and such that $\dis(f_N) \tNo 0$. We can therefore choose a metric $d$ on $T_N \uplus T$ extending $r_N$ and $r$
such that $d(x,f_N(x))\to 0$ for all $x\in T_N$ (compare, for example, \cite[Corollary~7.3.28]{BurBurIva01}).

Thus
$d_{\rm Pr}((f_N)_\ast \mu_N,\mu_N)\leq \sup_x d(x,f_N(x))\to 0$, as $N\to\infty$, and we obtain that
\begin{equation}
\label{e:001}
   d_{\rm Pr}\bigl((f_N)_\ast \mu_N,\mu\bigr)
 \leq
   d_{\rm Pr}\bigl((f_N)_\ast \mu_N,\mu_N\bigr)+d_{\rm Pr}\bigl(\mu_N,\mu\bigr)\tNo 0.
\end{equation}

Fix now $I\subseteq\{1,\ldots,n\}$ and define the subtree
\begin{equation}
	S := \rspan{\pi_I^n(\uvec)} \subseteq T.
\end{equation}
Because $S$ is closed in $T$, we have $\limsup_{N\to\infty} (f_N)_\ast\mu_N(S)\leq\mu(S)$
by the Portmanteau theorem (see Theorem~2.1 in \cite{Billingsley1999}). Because $\psi$ is increasing, this
implies
\begin{equation}\label{e:pointwiseineq}
	\limsup_{N\to\infty} \psi\((f_N)_*\mu_N(S)\) \leq \psi\(\mu(S)\).
\end{equation}
By assumption~(\ref{e:cond(i)}),
\begin{equation}
\label{e:002}
   \E\bigl[\psi\((f_N)_\ast\mu_N(S)\)\bigr]=\E\bigl[\psi\(\mu_N(\rspan{\pi_I^n(\uvec_N)})\)\bigr]\tNo
   \E\bigl[\psi\(\mu(S)\)\bigr].
\end{equation}
\eqref{e:002} and \eqref{e:pointwiseineq} together yield $\psi\((f_N)_\ast\mu_N(S)\)\tNo\psi\(\mu(S)\)$, almost surely.
Because $\psi$ is strictly increasing, also $(f_N)_\ast\mu_N(S)\to\mu(S)$.  Using once more the Portmanteau
theorem and closedness of $S$, we obtain that
\begin{equation}
   (f_N)_\ast\mu_N\restricted{S} \TNo \mu\restricted{S}.
\end{equation}
The inequality
\begin{equation} \label{e:003}
\begin{aligned}
   &d_{\rm pGw}\bigl(\pi_I^n\(\rspan{\uvec_N},\uvec_N,\mu_N\),\, \pi_I^n\(\rspan{\uvec},\uvec,\mu\)\bigr)
 \\
 &\leq
   d_{\rm Pr}\bigl(\mu_N{}\restricted{\rspan{\pi_I^n(\uvec_N)}},\,
   	(f_N)_\ast\mu_N\restricted{S}\bigr)
   + d_{\rm Pr}\bigl((f_N)_\ast\mu_N\restricted{S},\,
	   \mu\restricted{S}\bigr),
\end{aligned}
\end{equation}
then gives the {\rm sGw}-convergence.
\end{proof}\sm

As for the \pGwtopo, we define an associated set of test functions $\tilde\Phi:\Hn\to \R$ by
\begin{equation}\label{eq:phitilde}
	\tilde\Phi(T,\uvec,\mu) := \prod_{I \subseteq \{1,\ldots,n\}} \Phi_I\bigl( \pi^n_I(T,\uvec,\mu) \bigr),
\end{equation}
where the $\Phi_I$ are polynomials on $\H_{\#I}$.
Obviously, this class of test functions induces the \sGwtopo\ on $\Ht$, and together with the polynomials on $\Hn$, the \sGwtopo\ on $\Hn$.
We also define
\begin{equation}
\label{e:004}
   \Pit
 :=
   \bigl\{\prod_{I \subseteq \{1,\ldots,n\}} \Phipol_I\circ
   \pi^n_I:\,\Phi_I^{\gamma,m,\varphi}\in\Pi_{\#I}\bigr\}.
\end{equation}

\subsection{The LWV-topology}
\label{subsec:LWV-topo}
In this subsection we give the definition of \emph{bi-measure $\R$-trees} and equip the space of equivalence classes of bi-measure $\R$-trees with the
\emph{leaf-sampling weak vague topology}, in the following referred to as the \emph{LWV-topology}.

Given a rooted measure $\R$-tree $(T,\mu)$, denote by
\begin{equation}
\label{e:muskel}
   \muskel
  :=
    \bigcup_{v\in \supp(\mu)} {[\rho, v[} \cup \bigl\{v\in T:\;\mu\bigl(\{v\}\bigr)>0\bigr\}
\end{equation}
the \emph{$\mu$\nbd skeleton} of $(T,\mu)$, and by
\begin{equation}
\label{e:muleaf}
   \muleaf
  :=
    \rspan{\supp(\mu)} \setminus \muskel
\end{equation}
the set of \emph{$\mu$\nbd leaves} of $(T,\mu)$.

We call $(T,\mu,\nu)$ a (rooted) \emph{bi-measure $\R$-tree} if $(T,\mu)$ is a (rooted) measure $\R$-tree and $\nu$
is a ($\sigma$-finite) measure on $T$ which satisfies the following two conditions:
\begin{enumerate}
	\item\label{it:finiteonfinite} $\nu\([\rho, u]\)$ is $\mu$\nbd a.s.\ finite for $u\in T$,
	\item $\nu$ vanishes on the set of $\mu$\nbd leaves, i.e., $\nu(\muleaf)=0$.
\end{enumerate}
Note that \ref{it:finiteonfinite} implies that $\nu$ is finite on subtrees of $T$ with a finite number of leaves
sampled with $\mu$, a.s, and that $\nu\restricted{\muskel}$ is $\sigma$\nbd finite (because our definition of
measure $\R$-trees includes separability of $\supp(\mu)$).
In many interesting cases, however, $\nu$ is not locally finite.

\begin{definition}[The spaces $\Hbi$ and $\HK$]
Two bi-measure \Rtree s $(T,\mu, \nu)$ and\/ $(T', \mu', \nu')$ are called \emph{equivalent} if there exists
an isometry  $\phi\colon \rspan{\supp(\mu)} \to T'$ preserving the root and\/ $\mu$ and preserving $\nu$ on the
$\mu$\nbd skeleton, i.e., $\phi_\ast(\mu)=\mu'$ and\/ $\phi_\ast(\nu\restricted{\muskel}) = \nu'\restricted{\muskelp}$.
In particular, $(T, \mu, \nu)$ is equivalent to $(T, \mu, \nu\restricted{\muskel})$.

We denote by\/ $\mathbb H^{f,\sigma}$ the space of equivalence classes of (rooted) bi-measure\/ \Rtree s, and
by\/ $\HK:=\bset{(T, \mu, \nu) \in \Hbi}{\|\mu\|\le K}$, $K>0$, the subspace where the total mass of the sampling
measure is bounded by\/ $K$.
\label{Def:003}
\end{definition}\sm

Similar to the distance matrix distribution $\bnu^{(T,\mu)}$ introduced in (\ref{fct:rootedR}), which
characterizes $n$-pointed measure \Rtree s and is used to define the \pGwtopo, we want to characterize
bi-measure $\R$-trees by the so-called \emph{subtree-vector-distribution}. To introduce this, consider for
a given bi-measure \Rtree\ $(T,\mu,\nu)$ the function
\begin{equation}
\label{e:tauxn}
	\tauxn[(T,\mu,\nu)]\colon \left\{\begin{array}{rcl} \bigcup_{n\in\N} T^{n}&\to& \bigcup_{n\in\N}\Hn,
  \\
    (u_1,u_2,\ldots,u_n)&\mapsto&\bigl(\rspan{u_1,\ldots,u_n},(u_1,\ldots,u_n),\nu\bigr),
    \end{array}\right.
\end{equation}
which sends a vector of $n$ points in $T$ to the $n$-pointed \Rtree\ spanned by these points and
equipped with $\nu$, which we tacitly understand to be restricted to the appropriate space, i.e.\ $\rspan{u_1,...,u_n}$.
We also define the function
\begin{equation}
\label{e:taux}
	\taux[(T,\mu,\nu)]\colon \left\{\begin{array}{rcl} T^{\mathbb{N}}&\to&\prod_{n\in\mathbb{N}}\Hn,
  \\[1mm]
    \underline{u}&\mapsto&\bigl(\tauxn[(T,\mu,\nu)](u_1),\,\tauxn[(T,\mu,\nu)](u_1,u_2),\,\ldots\bigr),
    \end{array}\right.
\end{equation}
which sends a sequence of points to the sequence of pointed measure \Rtree s spanned and pointed by the first $1$,
$2$, etc.\ points and each of these is equipped with the appropriate restriction of $\nu$.
Note that $\tauxn[(T,\mu,\nu)]$ does not depend on the measure $\mu$ and is in general not continuous.

\begin{lemma}[Measurability]
	Equip\/ $\Hn$ with the $n$-pointed Gromov-weak topology, and\/ $\prod_{n\in\N} \Hn$ with the
	product topology. Then the function $\taux$ is measurable for all\/ $\smallx\in\Hbi$.
\label{Lem:001}
\end{lemma}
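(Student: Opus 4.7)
The plan is to reduce measurability of $\taux$ to a countable family of test-function computations, and then verify these using the $\R$-tree structure of $T$.

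\textbf{Reduction.} Since $\prod_{n\in\N}\Hn$ carries the product topology, its Borel $\sigma$\nbd algebra coincides with the product $\sigma$\nbd algebra, so measurability of $\taux$ is equivalent to measurability of each coordinate map $\uvec\mapsto\tauxn[\smallx](u_1,\ldots,u_n)$. As this factors through the projection $T^{\N}\to T^n$, it suffices to check measurability of the restricted map $\tauxn[\smallx]\colon T^n\to\Hn$ for each $n$. By Proposition~\ref{prop:pGwcharacterizations}, the class $\Pin\subseteq\Cb(\Hn)$ induces the \pGwtopo\ on the separable space $\Hn$; hence $\Pin$ generates the Borel $\sigma$\nbd algebra on $\Hn$, and measurability of $\tauxn[\smallx]$ reduces to verifying that $\Phi\circ\tauxn[\smallx]$ is Borel measurable on $T^n$ for every $\Phi=\Phipol\in\Pin$.

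\textbf{Explicit formula.} For such a test function,
\begin{equation*}
	\Phi\bigl(\tauxn[\smallx](\uvec)\bigr)
	= \gamma\bigl(\nu(\rspanu)\bigr)\,
		\int_{T^m}\varphi\bigl(R^T(\uvec,\vvec)\bigr)\prod_{j=1}^m\unit_{\rspanu}(v_j)\,\nu^{\otimes m}(\mathrm d\vvec),
\end{equation*}
where the rapid decay of $\gamma$ at infinity kills the entire expression on $\{\uvec:\nu(\rspanu)=\infty\}$; on its complement the integrand is bounded and supported on a set of finite $\nu^{\otimes m}$-measure.

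\textbf{Main obstacle: joint measurability of the span indicator.} The key ingredient is that $(\uvec,v)\mapsto\unit_{\rspanu}(v)$ is jointly Borel measurable on $T^{n+1}$. Using the $\R$-tree characterization $v\in[\rho,u_i]$ iff $r(\rho,v)+r(v,u_i)=r(\rho,u_i)$, the indicator $\unit_{[\rho,u_i]}(v)$ is a closed (hence Borel) condition on $(u_i,v)\in T^2$, so
\[
	\unit_{\rspanu}(v) \;=\; \max_{1\le i\le n}\unit_{[\rho,u_i]}(v)
\]
is jointly Borel in $(\uvec,v)\in T^{n+1}$. Combined with continuity of $R^T$ and $\varphi$, the integrand above is jointly Borel in $(\uvec,\vvec)$, and Tonelli's theorem (valid since $\nu$ is $\sigma$\nbd finite) yields Borel measurability of the integral in $\uvec$; measurability of $\uvec\mapsto\nu(\rspanu)=\int\unit_{\rspanu}\,\mathrm d\nu$ follows by the same argument, and composing with continuous $\gamma$ finishes the proof. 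The only genuinely tree-theoretic input is the closed-condition characterization of $[\rho,u_i]$; the rest is formal bookkeeping.
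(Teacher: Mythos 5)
Your proposal is correct and follows essentially the same route as the paper's proof: reduce via the product structure to each $\tauxn$, reduce further to measurability of $\Phi\circ\tauxn$ for a topology-inducing class of test functions on the separable space $\Hn$, write the explicit integral formula, and conclude from joint measurability of $(\uvec,\vvec)\mapsto\mathds 1_{\{v_1,\ldots,v_m\in\rspanu\}}\varphi\bigl(R^T(\uvec,\vvec)\bigr)$ via Fubini--Tonelli. The only differences are cosmetic: you work with $\Pin$ instead of plain polynomials (handling the $\gamma$ factor and the set where $\nu(\rspanu)=\infty$), and you spell out the detail the paper leaves implicit, namely that the span indicator is jointly Borel via the closed condition $r(\rho,v)+r(v,u_i)=r(\rho,u_i)$.
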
\sm

\begin{proof}
It is enough to show that $\tauxn$ is measurable on $T^n$ for each $n\in\N$. Fix therefore $n\in\mathbb{N}$.

Since $\Hn$ is separable (Proposition~\ref{prop:pGwcharacterizations}), and the space of all polynomials induces the $n$-pointed Gromov-weak topology on
$\Hn$, it is enough to show
that $\Phi\circ \tauxn$ is measurable for every polynomial $\Phi$ (compare (\ref{e:polynomial})).
As for each $m\in\mathbb{N}$, $\vphi\in{\mathcal C}_b(\R_+^{m+n+1\choose 2})$ and $\smallx=(T,\mu,\nu)$,
\begin{equation}
\label{e:tauxmeas}
   \Phi^{m,\vphi}\circ\tauxn(\uvec)
 =
   \int\nu^{\otimes m}(\mathrm{d}\vvec)\,\mathds 1_{\{v_1,\ldots,v_m\in \rspanu\}}\,\varphi\bigl(R^T(\uvec,\vvec)\bigr),
\end{equation}
this follows from joint measurability of $(\uvec,\vvec)\mapsto\mathds 1_{\{v_1,\ldots,v_m\in \rspanu\}}(\varphi\circ R^T)(\uvec,\vvec)$.
\end{proof}\sm

We are now in a position to define the
\emph{subtree vector distribution}, $\varpi^\smallx$, of a bi-measure $\R$-tree $\smallx=(T,\mu,\nu)$ as
\begin{equation}
\label{e:subtreedist}
   \varpi^\smallx
  :=
    \|\mu\|\cdot (\taux)_\ast\((\mu^\circ)^{\otimes\mathbb{N}}\) \in{\mathcal M}_f\bigl(\prod_{n\in\mathbb{N}}\Hn\bigr).
\end{equation}

\begin{definition}[LWV-topology]
We say that a sequence\/ $(\smallx_N)_{N\in\mathbb{N}}$ converges to $\smallx$  in\/ $\Hbi$
in the leaf-sampling weak vague topology (\/{\rm LWV}-topology) if
the corresponding subtree vector distributions converge, i.e.,
\begin{equation}
\label{e:LWVtopology}
	\varpi^{\smallx_N}
		\TNo
	\varpi^\smallx,
\end{equation}
where convergence is weak convergence of finite measures on\/ $\prod_n (\Hn,{\rm pGw})$.
\label{def:LWV-topo}
\end{definition}\sm

\begin{remark}
	Obviously, $\HK$ is closed in $\Hbi$ with LWV-topology, $\Hbi=\bigcup_{K\in\N} \HK$, and for every
	compact set $\K\subseteq \Hbi$ there exists $K\in\N$ with $\K\subseteq \HK$.
\end{remark}\sm

\begin{rem}[Relation with Gromov-weak topology]\hspace{1cm}
\begin{enumerate}
	\item {\rm LWV}-convergence of $(T_N, \mu_N,\nu_N)_{N\in\mathbb{N}}$ implies Gromov-weak convergence of $(T_N, \mu_N)_{N\in\mathbb{N}}$.
	\item Gromov-weak convergence of $(T_N, \mu_N)_{N\in\mathbb{N}}$ does not imply {\rm LWV}-convergence\ of $(T_N, \mu_N,
		\mu_N)_{N\in\mathbb{N}}$ (compare Example~\ref{ex:disappear2}).\exend
\end{enumerate}
\label{Rem:004}
\end{rem}\sm

Recall from Definition~\ref{Def:002} and Definition~\ref{def:sGw}
the $n$-pointed Gromov-weak topology ($\rm pGw$) and the subtree Gromov-weak topology ($\rm sGw$), respectively.
Let $(U_{N,k})_{k\in\N}$ be an i.i.d.\ sequence of $\mu_N^\circ$\nbd distributed random variables,
and $\underline{U}^n_N:=(U_{N,1},\ldots, U_{N,n})$.
The definition of \LWVconv\ requires, in addition to convergence of $\|\mu_N\|$, the \emph{joint} convergence in law
with respect to the \pGwtopo\ of $\tauxNn(\underline{U}_N^n)$, $n\in\N$.
The next proposition shows that we can, on one hand, weaken this requirement to individual convergence of all
$\tauxNn(\underline{U}_N^n)$, and, on the other hand, strengthen it to require convergence in law with respect
to the \sGwtopo.

\begin{proposition}[Characterization of LWV-convergence]
	Consider a sequence of bi-measure \Rtree s $\smallx_N=(T_N,\mu_N,\nu_N) \in \Hbi$ and another bi-measure
	\Rtree\ $\smallx\in\mathbb H^{f,\sigma}$ such that\/ $\|\mu_N\|\to \|\mu\|$, as $N\to\infty$.
	The three following statements are equivalent:
\begin{enumerate}
	\item\label{it:pGw} $\smallx_N\toLWV\smallx$, as $N\to\infty$.
	\item\label{it:separate} For all $n\in\mathbb{N}$,
	\begin{equation}\label{eq:equiv:pGw}
		(\tauxNn)_\ast\bigl(\mu^\circ_N\bigr)^{\otimes n}
			\underset{N\rightarrow \infty}{\stackrel{\rm pGw}{\Longrightarrow}}
			(\tauxn)_\ast\bigl(\mu^\circ\bigr)^{\otimes n}.
	\end{equation}
	\item\label{it:sGw} Equipping\/ $\prod_{n\in\mathbb{N}}\Hn$ with the product topology $\prod(\mathrm{sGw})$,
	\begin{equation} \label{eq:equiv:PisGw}
		 \bigl(\tauxN\bigr)_\ast\bigl(\mu^\circ_N\bigr)^{\otimes\mathbb N}
			 \,\underset{N\rightarrow\infty}{\stackrel{\scriptscriptstyle\prod(\mathrm{sGw})}{\Longrightarrow}}\,
			 \bigl(\taux\bigr)_\ast\bigl(\mu^\circ\bigr)^{\otimes\mathbb N}.
	\end{equation}
\end{enumerate}
\label{prop:equivconvergence}
\end{proposition}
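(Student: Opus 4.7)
The plan is to establish the cycle (iii) $\Rightarrow$ (i) $\Rightarrow$ (ii) $\Rightarrow$ (iii), with the first two implications being essentially continuity and projection arguments, and the bulk of the work lying in (ii) $\Rightarrow$ (iii). For (iii) $\Rightarrow$ (i), I would use that sGw is strictly finer than pGw on each $\Hn$, so the identity from $\prod_n(\Hn,\mathrm{sGw})$ to $\prod_n(\Hn,\mathrm{pGw})$ is continuous and thus transports weak convergence of probability measures. Multiplying by the convergent scalars $\|\mu_N\| \to \|\mu\|$ then upgrades weak convergence of probability measures to weak convergence of the finite measures $\varpi^{\smallx_N}$ in $\CM_f\bigl(\prod_n(\Hn,\mathrm{pGw})\bigr)$. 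For (i) $\Rightarrow$ (ii), push forward $\varpi^{\smallx_N}$ under the continuous projection onto the $n$-th coordinate of the product. The image is $\|\mu_N\|\cdot(\tauxNn)_\ast(\mu_N^\circ)^{\otimes n}$, and dividing by $\|\mu_N\|$ (assuming $\|\mu\|>0$; the degenerate case is trivial as both sides are zero) recovers exactly the probability measures in \eqref{eq:equiv:pGw}.

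For (ii) $\Rightarrow$ (iii), the first step is to upgrade, for every fixed $n$, the pGw-convergence in (ii) to sGw-convergence. Since $(\Hn, d_{\rm pGP})$ is Polish (Proposition~\ref{prop:pGwcharacterizations}), Skorohod's representation theorem allows me to realize the random $n$-pointed trees $\tauxNn(\underline{U}_N^n)$ and $\tauxn(\underline{U}^n)$ (where $\underline{U}_N^n$ is i.i.d.\ from $\mu_N^\circ$ and similarly for $\underline{U}^n$) on a common probability space with a.s.\ pGw-convergence. I would then apply Lemma~\ref{lem:lemma42} with the strictly increasing, bounded test function $\psi(x):=1-e^{-x}$, for which the required integrability $\psi\bigl(\|\nu\restricted{\rspan{\underline{U}^n}}\|\bigr)\le 1$ is automatic. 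The delicate hypothesis is the expectation condition
\[
   \E\bigl[\psi\bigl(\nu_N(\rspan{\pi^n_I(\underline{U}_N^n)})\bigr)\bigr]
   \tNo
   \E\bigl[\psi\bigl(\nu(\rspan{\pi^n_I(\underline{U}^n)})\bigr)\bigr],
   \quad \forall I\subseteq\{1,\ldots,n\}.
\]
By the i.i.d.\ nature of the sampled points, the law of $\pi^n_I\bigl(\tauxNn(\underline{U}_N^n)\bigr)$ coincides with that of $\tauxN[^k]\bigl(\underline{U}_N^k\bigr)$ for $k=|I|$, and similarly in the limit. Since the total-mass functional on $(\H_k,\mathrm{pGw})$ is continuous and $\psi$ is bounded continuous, the convergence of expectations then follows by applying (ii) at level $k$ to the bounded continuous function $\smallx \mapsto \psi(\|\mathrm{meas}(\smallx)\|)$. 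Lemma~\ref{lem:lemma42} now delivers a.s.\ sGw-convergence of the coupled sequence, hence convergence in law in sGw.

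The second step is to promote the marginal sGw-convergence of $\tauxNn$ to joint convergence of $(\tauxN[^1],\tauxN[^2],\ldots)$ in the product topology. For any fixed $n$, the map
\[
   \Hn \ni \smallx \;\mapsto\; \bigl(\pi^n_{\{1\}}(\smallx),\,\pi^n_{\{1,2\}}(\smallx),\,\ldots,\,\pi^n_{\{1,\ldots,n\}}(\smallx)\bigr) \in \prod_{k=1}^n\H_k
\]
is continuous when the domain carries sGw and each factor carries sGw: each projection in the image is itself a sub-projection of $\pi^n_{\{1,\ldots,n\}}$, so sGw-convergence of $\smallx_N$ entails sGw-convergence of each coordinate. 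Applied to the measures from Step~1, this yields weak convergence of every finite-dimensional marginal of $(\tauxN)_\ast(\mu_N^\circ)^{\otimes\N}$ in $\prod_{k=1}^n(\H_k,\mathrm{sGw})$. Tightness on the infinite product follows from tightness of each marginal (by Prohorov applied factor-wise), so a standard diagonal argument gives weak convergence on the full product space $\prod_{n\in\N}(\Hn,\mathrm{sGw})$, establishing (iii).

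The main obstacle is Step~1, the bootstrap from pGw to sGw via Lemma~\ref{lem:lemma42}. The difficulty is that the subtree-projection map $\pi^n_I$ is not pGw-continuous, so the convergence of expectations cannot be read off directly from pGw-convergence at level $n$; the workaround relies crucially on the exchangeability reduction to $\tauxN[^{|I|}]$, together with the choice of a bounded $\psi$ that tames the otherwise unbounded total-mass functional.
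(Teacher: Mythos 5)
Your proposal follows essentially the same route as the paper's proof: (iii)$\Rightarrow$(i)$\Rightarrow$(ii) by continuity and projection, and (ii)$\Rightarrow$(iii) via Skorohod representation plus Lemma~\ref{lem:lemma42} with a bounded strictly increasing $\psi$, whose expectation hypothesis is verified exactly as in the paper through the exchangeability reduction of $\pi^n_I(\underline{U}^n_N)$ to $\underline{U}^{\#I}_N$ and the pGw-continuity of the total mass, followed by convergence of finite-dimensional marginals on the product. The only blemish is your closing appeal to marginal tightness and Prohorov, which is both unjustified as stated (it would require $(\H_k,\mathrm{sGw})$ to be Polish, which is not established) and unnecessary, since convergence of all finite-dimensional marginals already implies weak convergence on a countable product of metrizable spaces -- the standard fact the paper relies on implicitly.
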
\sm

\begin{proof}
First remark that \ref{it:sGw} $\Rightarrow$ \ref{it:pGw} $\Rightarrow$ \ref{it:separate} is straightforward.

We prove that \ref{it:separate} implies \ref{it:sGw}. Fix therefore $n\in\N$. By Skorohod's
representation theorem (Theorem~6.7 in \cite{Billingsley1999}), there exists a list
$\underline{U}^n=(U_1,\dots,U_n)$ of $n$ i.i.d.\ random variables with common distribution $\mu^\circ$ and
$\underline{U}^n_N=(U_{N,1},\dots,U_{N,n})$ i.i.d.\ random variables with distribution $\mu_N^\circ$ such that
\begin{equation}
\label{e:005}
   \tauxNn(\underline{U}^n_N)\underset{N\rightarrow \infty}{\topGw}  \tauxn(\underline{U}^n), \; \text{ almost surely.}
\end{equation}
In order to obtain \sGwconv, by Lemma~\ref{lem:lemma42}, it is sufficient to prove for all $I\subseteq\{1,\ldots,n\}$
that $\nu_N\(\rspan{\pi_I^n(\underline{U}^n_N)}\)$ converges weakly (as $\R_+$\nbd valued random variable) to
$\nu\(\rspan{\pi_I^n(\underline{U}^n)}\)$. Because $\pi_I^n(\underline{U}^n_N)$ has the same distribution as
$\underline{U}^{\#I}_N$, and similarly for $\underline{U}^n$ instead of $\underline{U}_N^n$, this follows from
\eqref{eq:equiv:pGw} for $n=\#I$, where we use that the total mass of an $n$-pointed measure \Rtree\ is
continuous in the \pGwtopo.
Finally, we conclude from Lemma~\ref{lem:lemma42} that
\begin{equation}
\label{e:008}
   \tauxNn(\underline{U}^n_N)\tosGw  \tauxn(\underline{U}^n), \; \text{as $N\to\infty$,  almost surely.}
\end{equation}
In particular, the one-dimensional marginals of $(\tauxN)_\ast\left(\mu^\circ_N\right)^{\otimes\mathbb N_0}$
converge as measures on $(\Hn, {\rm sGw})$. In order to obtain convergence of laws on the product space, we have
to show convergence of finite-dimensional marginals. This comes directly from the definition of {\rm sGw}-convergence.
\end{proof}\sm

We are now in a position to show that the subtree vector distribution characterizes bi-measure $\R$-trees uniquely.
\begin{proposition}[Reconstruction theorem for $\mathbb{H}^{f,\sigma}$]
	If $\smallx,\smallx'\in\mathbb{H}^{f,\sigma}$ are such that $\varpi^\smallx=\varpi^{\smallx'}$,
	then $\smallx=\smallx'$.
\label{prop:reconstruction}
\end{proposition}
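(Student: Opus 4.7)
The plan is to first recover the underlying rooted measure $\R$-tree $(T,\mu)$ from the distance-matrix information already encoded in $\varpi^\smallx$, and then to patch together the restrictions of $\nu$ to the subtrees spanned by i.i.d.\ samples in order to identify $\nu\restricted{\muskel}$. The total mass is just $\|\mu\|=\varpi^\smallx\bigl(\prod_n \Hn\bigr)$. Pushing the probability measure $\varpi^\smallx/\|\mu\|$ forward by the map that reads off the distance matrix $\bigl(r(u_i,u_j)\bigr)_{0\le i<j}$ (with $u_0:=\rho$) from the nested pointed structures $\tauxn(\uvec)$ yields back $\bnuT$. Vershik's proof of Gromov's reconstruction, extended to rooted finite-measure $\R$-trees in Subsection~\ref{Sub:Gw}, then determines $(T,\mu)$ uniquely up to the equivalence of Definition~\ref{Def:001}.

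To also recover $\nu\restricted{\muskel}$, I would apply Skorohod's representation theorem to realise on a common probability space two i.i.d.\ sequences $(U_k)_k$ from $\mu^\circ$ in $T$ and $(U'_k)_k$ from $\mu'^\circ$ in $T'$ such that $\taux(\Uvec)=\taux[\smallx'](\Uvec')$ almost surely, using the hypothesis $\varpi^\smallx=\varpi^{\smallx'}$. On this full-measure event, for each $n$ there is a root- and points-preserving isometry $\phi_n\colon \rspan{U_1,\ldots,U_n}\to\rspan{U_1',\ldots,U_n'}$ that pushes $\nu\restricted{\rspan{U_1,\ldots,U_n}}$ forward to $\nu'\restricted{\rspan{U_1',\ldots,U_n'}}$. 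By the unique-extension-of-isometries principle recorded in Remark~\ref{rem:000}, the maps $\phi_n$ are compatible, and hence $\phi:=\bigcup_n\phi_n$ is a well-defined isometry from $S:=\bigcup_n\rspan{U_1,\ldots,U_n}$ onto its counterpart $S'$ in $T'$.

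Almost surely $(U_k)_k$ is dense in $\supp(\mu)$ and contains every $\mu$-atom, so $\muskel\setminus S$ is $\nu$-null and $\rspan{\supp(\mu)}$ is the metric closure of $S$; moreover $\nu\restricted{\muskel}$ is the monotone limit of the consistent sequence $\nu\restricted{\rspan{U_1,\ldots,U_n}}$. Applying Remark~\ref{rem:000} once more, $\phi$ extends uniquely to an isometry $\overline\phi\colon\rspan{\supp(\mu)}\to\rspan{\supp(\mu')}$. This $\overline\phi$ fixes the root, pushes $\mu$ to $\mu'$ (because the empirical distributions of $(U_k)$ and $(U'_k)$ converge weakly to $\mu^\circ$ and $\mu'^\circ$, respectively, and $\overline\phi$ carries one sequence onto the other), and pushes $\nu\restricted{\muskel}$ to $\nu'\restricted{\muskelp}$. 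Picking any realisation $\omega$ in the full-measure event on which all of this holds yields the desired deterministic isometry witnessing $\smallx=\smallx'$.

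The main obstacle will be this third step: passing from the compatible family $(\phi_n)$ on the growing finite subtrees to a genuine isometry of $\rspan{\supp(\mu)}$ that correctly carries all three structures -- metric, $\mu$, and $\nu\restricted{\muskel}$ -- simultaneously. In particular, one has to check that $S$ is almost surely rich enough to pin down $\mu$ (including its atomic part, which lies in $\muskel$ but need not be approached monotonically by the spanned subtrees), and that the distributional identity $\varpi^\smallx=\varpi^{\smallx'}$ really forces each $\phi_n$ to preserve the full restricted measure $\nu$ rather than merely its total mass on the finite subtree.
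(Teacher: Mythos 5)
Your proposal is correct and follows essentially the same route as the paper's proof: couple a $\mu^\circ$-i.i.d.\ sequence in $T$ with a $\mu'^\circ$-i.i.d.\ sequence in $T'$ so that their subtree vectors coincide, build the point-matching isometry and extend it via Remark~\ref{rem:000}, recover $\mu$ from the empirical measures of the sequences, and recover $\nu\restricted{\muskel}$ from the restricted measures on the spanned finite subtrees, which almost surely exhaust the $\mu$-skeleton. The only cosmetic differences are that the paper simply selects one pair of uniformly distributed sequences with equal images under the sampling map (your coupling step is really a transfer argument rather than Skorohod's representation theorem, since the two laws are equal and not merely convergent), and that your preliminary appeal to Gromov's reconstruction theorem for $(T,\mu)$ is redundant, as the later steps already produce the required measure-preserving isometry.
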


\begin{proof}  Let $\smallx=(T,\mu,\nu),\smallx'=(T',\mu', \nu')\in\mathbb{H}^{f,\sigma}$ with
$\varpi^\smallx=\varpi^{\smallx'}$. It follows immediately that $\|\mu\|=\|\mu'\|$.
Assume w.l.o.g.\ that $\|\mu\|=\|\mu'\|=1$.\sm
	
We will first  adapt Vershik's proof of Gromov's reconstruction theorem for metric measure spaces
to show that $(T,\mu)=(T',\mu')$ (compare~\cite[$3\frac12.7$]{Gromov2000}).
Recall that a sequence $\uvec=(u_n)_{n\in\mathbb{N}}$ in $T$ is called \emph{$\mu$\nbd uniformly distributed} if
\begin{equation}\label{eq:uniformlydist}
	\tfrac{1}{n}\sum\nolimits_{i=1}^n\delta_{u_i}\Tno\mu,
\end{equation}
and note that, due to separability of $T$, $\mu^{\otimes \N}$\nbd almost every sequence is $\mu$\nbd uniformly distributed (see, for example, \cite[Theorem~11.4.1]{Dudley2002}).

Of course, the corresponding statement is also true for $\mu'$ instead of $\mu$, and
as $\varpi^{\smallx}=\varpi^{\smallx'}$, we can find a $\mu$\nbd uniformly distributed sequence
$\uvec=\folge{u}$ in $T$, and a $\mu'$\nbd uniformly distributed sequence $\uvec'=\folge{u'}$ in $T'$ with
$\taux(\uvec)=\taux[\smallx'](\uvec')$.
\sm
		
Put $u_0:=\rho$, $u_0':=\rho'$. Then $f(u_k) := u'_k$, for all $k\in\mathbb{N}_0$,  defines a root-preserving isometry from
$\{u_0,u_1,\ldots\}$ onto $\{u'_0,u'_1,\ldots\}$, which can be extended to an isometry (still denoted by $f$)
from $\rspan{\supp(\mu)}$ onto $\rspan{\supp(\mu')}$ (see Remark~\ref{rem:000}).
Because the sequences are uniformly distributed and $f_*$ is continuous,
\begin{equation}\begin{aligned}
	f_*(\mu) &= f_*\Bigl(\nlim \tfrac1n\sum_{i=1}^n\delta_{u_i}\Bigr)
		= \nlim f_*\Bigl(\tfrac1n\sum_{i=1}^n\delta_{u_i}\Bigr) \\
	 &= \nlim \tfrac1n\sum_{i=1}^n \delta_{u'_i} =  \mu'.
\end{aligned}\end{equation}
\sm

We still need to show that $f_\ast(\nu')=\nu$ (on the $\mu$-skeleton), or equivalently,
$f_\ast(\nu')(S)=\nu(S)$ for all finite trees $S\subseteq \muskel$.
		By definition of $\muskel$ and the fact that $\folge{u}$ is uniformly distributed, we have $S
		\subseteq \rspan{\uvec^n}$ for sufficiently large $n$.
		Because $(\rspan{\uvec^n}, \uvec^n, \nu)$ and $(\rspan{{\uvec'}^n}, {\uvec'}^n, \nu')$ are
		equivalent as $n$-pointed metric measure spaces,
		$f_\ast(\nu')\restricted{\rspan{\uvec^n}} = \nu\restricted{\rspan{\uvec^n}}$.
\end{proof}\sm

We can now immediately conclude that $\Hbi$ is separable and metrizable. We are not able to come up, however, with a complete metric.
``Polishness'' of the state space will not be used throughout the paper.
\begin{cor}[Separability \& metrizability]
	The space $\Hbi$ equipped with the {\rm LWV}-topology is separable and metrizable.
\label{cor:metrizable}
\end{cor}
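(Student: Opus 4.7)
The plan is to exhibit $\Hbi$ as (homeomorphic to) a subspace of a Polish space via the subtree vector distribution map
\begin{equation*}
	\Psi\colon \Hbi \to \mathcal{M}_f\Bigl(\prod_{n\in\N}\Hn\Bigr), \qquad \smallx \mapsto \varpi^{\smallx},
\end{equation*}
where each factor $\Hn$ carries the \pGwtopo\ and the product is given the product topology. Since the LWV\nbd topology is, by Definition~\ref{def:LWV-topo}, characterized as the initial topology on $\Hbi$ making $\Psi$ continuous into $\mathcal{M}_f\(\prod_n\Hn\)$ with the weak topology, all I need to do is show that the codomain is separable and metrizable, and that $\Psi$ is injective.

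The first ingredient is Proposition~\ref{prop:pGwcharacterizations}, which says that each $(\Hn, d_{\rm pGP})$ is a complete separable metric space, hence Polish. A countable product of Polish spaces is Polish under the product topology, so $\prod_{n\in\N} \Hn$ is Polish. By standard weak convergence theory (e.g., Parthasarathy), the space $\mathcal{M}_f(Z)$ of finite Borel measures on a Polish space $Z$ is itself Polish under the weak topology; in particular it is separable and metrizable (a metric can be built by combining the Prohorov metric on the normalizations with the Euclidean distance between total masses). Hence the codomain of $\Psi$ is a separable metric space.

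The second ingredient is the reconstruction theorem, Proposition~\ref{prop:reconstruction}, which asserts that $\Psi$ is injective. Pulling back the metric on $\mathcal{M}_f\(\prod_n\Hn\)$ via $\Psi$ therefore yields a metric on $\Hbi$, and by the very definition of the LWV\nbd topology as the initial topology under $\Psi$, this pullback metric induces exactly the LWV\nbd topology. Being a subspace of a separable metric space, $(\Hbi,\mathrm{LWV})$ is itself separable and metrizable.

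I do not expect any genuine obstacle: all work has already been done in Propositions~\ref{prop:pGwcharacterizations} and~\ref{prop:reconstruction}. The one point to be slightly careful about is that Definition~\ref{def:LWV-topo} is phrased via sequential convergence; however, once metrizability of the ambient space is established, sequential convergence under $\Psi$ uniquely determines the pullback topology, so no ambiguity arises. Note that I do \emph{not} claim completeness of $\Hbi$: the image $\Psi(\Hbi)$ need not be closed in $\mathcal{M}_f\(\prod_n\Hn\)$, which is consistent with the remark preceding the corollary that a complete metric is not provided.
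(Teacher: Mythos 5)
Your proposal is correct and follows essentially the same route as the paper: identify $\Hbi$, via the injectivity of the subtree vector distribution map (Proposition~\ref{prop:reconstruction}), with a subspace of $\mathcal M_f\bigl(\prod_{n\in\N}\Hn\bigr)$, which is separable and metrizable because each $(\Hn,d_{\rm pGP})$ is (Proposition~\ref{prop:pGwcharacterizations}), and so is the countable product and the space of finite measures on it with the weak topology. Your extra remarks (pullback metric inducing the LWV-topology, no claim of completeness) are consistent with the paper and add nothing that changes the argument.
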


\begin{proof}
	As the map which sends a bi-measure $\R$-tree to its subtree vector distribution is injective, we
	can identify $\Hbi$ with a subspace of ${\mathcal M}_f(\prod_{n\in\mathbb{N}}\Hn)$.
	$\Hn$ is separable, metrizable according to Proposition~\ref{prop:pGwcharacterizations}, hence the same
	holds for the countable product and the space of finite measures on it (with weak topology).
\end{proof}\sm

It is important to note that $\mu$ and $\nu$ play different r\^oles in the {\rm LWV}-topology, even if $\nu$
happens to be finite and $\mu$ is supported on the skeleton.  While the convergence is \emph{weak} with respect
to $\mu$, it is \emph{vague} with respect to $\nu$ in the sense that the total $\nu$-mass is not preserved under
convergence, but mass may get lost in the limit. We give two examples of this phenomenon.


\begin{figure}

  \begin{minipage}[b]{0.49\linewidth}
   \centering
\scalebox{1} 
{
\begin{pspicture}(0,-2.2929688)(2.8028126,2.2929688)
\definecolor{color3974}{rgb}{0.0,0.0,0.8}
\definecolor{color3409}{rgb}{1.0,0.0,0.2}
\psdots[dotsize=0.24,linecolor=color3974,fillstyle=solid,dotstyle=square](1.2609375,0.11453125)
\psline[linewidth=0.04cm](0.5209375,2.1345313)(0.515893,-2.1214974)
\psline[linewidth=0.04cm](0.5209375,0.13453124)(2.0209374,0.13453124)
\psdots[dotsize=0.14,fillstyle=solid,dotstyle=o](0.515893,-2.1214974)
\psdots[dotsize=0.14,fillstyle=solid,dotangle=-233.8279,dotstyle=o](1.2658525,0.1208301)
\psdots[dotsize=0.14,fillstyle=solid,dotangle=-128.35576,dotstyle=o](0.52378684,0.13139208)
\usefont{T1}{ptm}{m}{n}
\rput(0.8723438,-2.1154687){$\rho$}
\usefont{T1}{ptm}{m}{n}
\rput(0.22234374,0.16453125){$z$}
\psdots[dotsize=0.14,fillstyle=solid,dotangle=-128.35576,dotstyle=o](0.52378684,2.131392)
\psdots[dotsize=0.14,fillstyle=solid,dotangle=-233.8279,dotstyle=o](2.0658524,0.1208301)
\psdots[dotsize=0.2,linecolor=color3409,dotstyle=x](2.0609374,0.11453125)
\psdots[dotsize=0.2,linecolor=color3409,dotstyle=x](0.5209375,2.1345313)
\usefont{T1}{ptm}{m}{n}
\rput(0.85234374,2.1045313){$y_1$}
\usefont{T1}{ptm}{m}{n}
\rput(2.3123438,0.14453125){$y_2$}
\usefont{T1}{ptm}{m}{n}
\rput(1.2123437,0.38453126){$w$}
\end{pspicture}
}
\caption{}   \label{fig:disapears1}
  \end{minipage}
  \hfill
  \begin{minipage}[b]{0.49\linewidth}
   \centering

\scalebox{1} 
{
\begin{pspicture}(0,-2.3129687)(2.9428124,2.3129687)
\definecolor{color420}{rgb}{0.0,0.0,0.8}
\psdots[dotsize=0.24,linecolor=color420,fillstyle=solid,dotstyle=square](0.5409375,2.1145313)
\psdots[dotsize=0.24,linecolor=color420,fillstyle=solid,dotstyle=square](1.3409375,-1.1854688)
\psdots[dotsize=0.24,linecolor=color420,fillstyle=solid,dotstyle=square](2.0409374,0.09453125)
\psdots[dotsize=0.24,linecolor=color420,fillstyle=solid,dotstyle=square](1.8609375,0.9345313)
\psdots[dotsize=0.24,linecolor=color420,fillstyle=solid,dotstyle=square](1.3409375,1.4145312)
\psline[linewidth=0.04cm](0.5209375,0.11453125)(2.0209374,0.11453125)
\psdots[dotsize=0.14,fillstyle=solid,dotangle=-233.8279,dotstyle=o](2.0658524,0.1008301)
\psline[linewidth=0.04cm](0.50852025,0.12719882)(1.3285152,-1.12883)
\psdots[dotsize=0.14,fillstyle=solid,dotangle=-290.68948,dotstyle=o](1.3415958,-1.1739296)
\psline[linewidth=0.04cm](0.50852025,0.12692617)(1.3285152,1.382955)
\psdots[dotsize=0.14,fillstyle=solid,dotangle=-69.31051,dotstyle=o](1.3415958,1.4280546)
\psline[linewidth=0.04cm](0.534419,0.11711593)(1.8244572,0.88249254)
\psdots[dotsize=0.14,fillstyle=solid,dotangle=-95.49155,dotstyle=o](1.8560941,0.9171938)
\psline[linewidth=0.04cm](0.5209375,2.1145313)(0.515893,-2.1414974)
\psdots[dotsize=0.14,fillstyle=solid,dotstyle=o](0.515893,-2.1414974)
\psdots[dotsize=0.14,fillstyle=solid,dotangle=-128.35576,dotstyle=o](0.52378684,0.11139208)
\usefont{T1}{ptm}{m}{n}
\rput(0.8723438,-2.1354687){$\rho$}
\usefont{T1}{ptm}{m}{n}
\rput(0.22234374,0.14453125){$z$}
\psdots[dotsize=0.14,fillstyle=solid,dotangle=-128.35576,dotstyle=o](0.52378684,2.111392)
\psdots[dotsize=0.2,linecolor=red,dotstyle=x](0.5209375,2.1145313)
\usefont{T1}{ptm}{m}{n}
\rput(0.79234374,2.1245313){$y$}
\usefont{T1}{ptm}{m}{n}
\rput(1.6023438,1.6045313){$x_1$}
\psdots[dotsize=0.2,linecolor=red,dotstyle=x](1.3209375,1.4145312)
\psdots[dotsize=0.2,linecolor=red,dotstyle=x](1.8609375,0.91453123)
\psdots[dotsize=0.2,linecolor=red,dotstyle=x](2.0409374,0.09453125)
\psdots[dotsize=0.2,linecolor=red,dotstyle=x](1.3409375,-1.1854688)
\psdots[dotsize=0.04](1.7009375,-0.24546875)
\psdots[dotsize=0.04](1.5809375,-0.44546875)
\psdots[dotsize=0.04](1.4409375,-0.62546873)
\usefont{T1}{ptm}{m}{n}
\rput(2.2023437,1.0645312){$x_2$}
\usefont{T1}{ptm}{m}{n}
\rput(2.4623437,0.12453125){$x_3$}
\usefont{T1}{ptm}{m}{n}
\rput(1.7823437,-1.3154688){$x_N$}
\end{pspicture}
}
\caption{}   \label{fig:disapears2}
\end{minipage}
\caption*{The crosses $\times$ are $\mu$-masses and the squares $\square$ are $\nu$-masses.}
\end{figure}


\begin{example}
Consider the (finite) \Rtree\ shown in Figure~\ref{fig:disapears1}
and define the probability measures $\mu_N:=(1-\frac 1N)\delta_{y_1}+\frac 1N \delta_{y_2}$. Then $(T,\mu_N)$
converges Gromov-weakly to $\(\{\rho,y_1\},\delta_{y_1}\)$. We endow $(T,\mu_N)$ with a constant measure
$\nu:=\delta_w$, then $(T,\mu_N,\delta_w)$ converges in the {\rm LWV}-topology to
$(\{\rho,y_1\},\delta_{y_1},0)$.
\label{ex:disappear1}
\end{example}\sm

\begin{example}[Figure~\ref{fig:disapears2}]
We define a sequence of \Rtree s $$T_N:=\bigl\{\rho, z,y,x_1,x_2,\dots,x_N\bigr\}$$ shown in
Figure~\ref{fig:disapears2} where $r_N(\rho,z)=r_N(z,y)=1$ and $r(z,x_i)=\frac 1N$, for all $i=1,\dots,N$. We
define a probability measure $\mu_N$ on the leaves of $T_N$ by $\mu_N=\lambda\delta_y+(1-\lambda)\sum_{i}\frac
1N \delta_{x_i}$, then $(T_N,\mu_N)$ converges Gromov-weakly to
$(\{\rho,z,y\},\lambda\delta_y+(1-\lambda)\delta_z)$. If we endow this measure \Rtree\ with the measure $\nu_N=\mu_N$, then $(T_N,\mu_N,\nu_N)$ converges in the {\rm LWV}-topology to $(\{\rho,z,y\},\lambda\delta_y+(1-\lambda)\delta_z,\lambda\delta_y)$.
\label{ex:disappear2}
\end{example}\sm

\subsection{Convergence determining classes for the LWV-topology}\label{sec:convdet}

In this subsection, we introduce important classes of test functions and use them to obtain several convergence
results. Namely, we consider functions $\Psi=\Psipol\colon \mathbb H^{f,\sigma} \to \R$ of the form
\begin{equation}
	\Psi(\smallx):=\Psipol(\smallx) :=
		\gamma(\|\mu\|)\cdot\int_{T^n}\mu^{\otimes n}(\mathrm{d}\uvec)\, \Phi\(\tauxn(\uvec)\),
\end{equation}
where $\gamma\in{\mathcal C}_b(\R_+)$ and $\Phi\in\Cb(\Hn)$.

Recall $\Pin$ and $\Pit$ from (\ref{e:Pi}) and (\ref{e:004}).
As we will see later, the following subspaces of test functions are helpful in characterizing \LWVconv. Put
\begin{equation}
\label{e:F}
   {\mathcal F}
  :=
    \bset{\Psipol[1]}{\Phi\in\Pin},
\end{equation}
and
\begin{equation}
\label{e:tildeF1}
   \tilde{\mathcal F}^1
  :=
    \bset{\Psitpol[1]}{\tilde{\Phi}\in\Pit},
\end{equation}
and
\begin{equation}
\label{e:Fgamma}
   \tilde{{\mathcal F}}
  :=
    \bset{\Psitpol}{\tilde{\Phi}\in\Pit,\,\lim_{x\to\infty}x^k\gamma(x)=0\;\forall k\in\mathbb{N}}.
\end{equation}

\begin{lemma}[LWV-convergence via test functions]
Both\/ $\mathcal F$ and\/ $\tilde{\mathcal F}$ induce the {\rm LWV}-topology, i.e.,
for a sequence of bi-measure \Rtree s $\smallx_N\in \Hbi$ and another bi-measure \Rtree\
$\smallx\in\Hbi$,  the following statements are equivalent.
\begin{enumerate}
	\item\label{it:lwv} $\smallx_N  \toLWV \smallx$, as $N\to\infty$.
	\item\label{it:fconv} $\Psi(\smallx_N)\to \Psi(\smallx)$, as $N\to\infty$, for all $\Psi\in{\mathcal F}$.
	\item\label{it:ftconv} $\tilde\Psi(\smallx_N)\to \tilde\Psi(\smallx)$, as $N\to\infty$, for all $\tilde\Psi\in\tilde{\mathcal F}$.
\end{enumerate}
\label{lem:topinduce}
\end{lemma}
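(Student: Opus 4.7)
The plan is to reduce both equivalences to Proposition~\ref{prop:equivconvergence} through the identity
\begin{equation*}
\Psipol[1](\smallx) \,=\, \bigl\langle (\tauxn)_\ast \mu^{\otimes n},\,\Phi\bigr\rangle,\qquad
\Psitpol(\smallx) \,=\, \gamma(\|\mu\|)\,\bigl\langle (\tauxn)_\ast \mu^{\otimes n},\,\tilde\Phi\bigr\rangle,
\end{equation*}
which expresses the test functions as (weighted) linear functionals of the finite measure $(\tauxn)_\ast\mu^{\otimes n}$ on $\Hn$. For $n=1$ this measure is exactly the first marginal of the subtree vector distribution $\varpi^\smallx$, and in general it equals $\|\mu\|^{n-1}$ times the $n$-th marginal.

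The two forward implications (i)$\Rightarrow$(ii) and (i)$\Rightarrow$(iii) follow directly from Proposition~\ref{prop:equivconvergence}: it yields $\|\mu_N\|\to\|\mu\|$ and (indeed sGw-)convergence of every marginal $(\tauxNn)_\ast(\mu_N^\circ)^{\otimes n}$. Since every $\Phi\in\Pin$ is bounded and pGw-continuous on $\Hn$ and every $\tilde\Phi\in\Pit$ is bounded and sGw-continuous, convergence of the integrals together with that of the prefactors $\|\mu_N\|^n$ and $\gamma(\|\mu_N\|)$ is immediate by continuity.

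For the reverse implication (ii)$\Rightarrow$(i), the key case is $n=1$: then $\Psipol[1](\smallx)=\langle(\tauxn[1])_\ast\mu,\Phi\rangle$, and (ii) is exactly the statement that $\int\Phi\,d((\tauxn[1])_\ast\mu_N)\to\int\Phi\,d((\tauxn[1])_\ast\mu)$ for every $\Phi\in\Pi_1$. Since $\Pi_1$ is convergence determining for finite measures on $\M_1$ by Proposition~\ref{prop:pGwcharacterizations}, one obtains weak convergence of these measures, hence in particular $\|\mu_N\|\to\|\mu\|$. For $n\ge 2$ and $\|\mu\|>0$, dividing by the eventually nonzero $\|\mu_N\|^{n-1}$ and reapplying convergence-determination of $\Pin$ on $(\Hn,\mathrm{pGw})$ produces pGw-convergence of every marginal, and Proposition~\ref{prop:equivconvergence}\,\ref{it:separate} closes the loop. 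The case $\|\mu\|=0$ is covered by the already-obtained $\|\mu_N\|\to 0$. The implication (iii)$\Rightarrow$(i) is analogous: taking $n=0$ reduces $\tilde\Psi$ to $c\,\gamma(\|\mu\|)$, so admissible choices such as $\gamma(x)=e^{-x}$ (which separate points of $\R_+$) force $\|\mu_N\|\to\|\mu\|$; for $n\ge 1$ one divides by $\gamma(\|\mu_N\|)$ and applies the convergence-determining property of $\Pit$ on $(\Hn,\mathrm{sGw})$.

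The main obstacle is therefore to establish that $\Pit$ is convergence determining on $(\Hn,\mathrm{sGw})$. I plan to mirror the final paragraph of the proof of Proposition~\ref{prop:pGwcharacterizations}: the class $\Pit$ is multiplicatively closed (a product of elements of the form~\eqref{e:004} is again of that form, after combining indices and polynomial data), separates points of $\Hn$ (via the projections $\pi_I^n$, since each $\Pi_{\#I}$ separates points of $\H_{\#I}$), and induces the sGw-topology, as noted after~\eqref{e:004}. Le\,Cam's theorem \cite{LeCam57,HoffmannJorgensen1977} then delivers the convergence-determining property, completing the proof via Proposition~\ref{prop:equivconvergence}\,\ref{it:sGw}.
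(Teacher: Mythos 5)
Your forward implications, and your treatment of $\tilde{\mathcal F}$ (recovering $\|\mu_N\|\to\|\mu\|$ from the outer factor $\gamma(\|\mu\|)$ at $n=0$ with $\gamma(x)=e^{-x}$, then dividing it out), follow essentially the paper's route, which proves (i)$\Leftrightarrow$(ii) and then (i)$\Rightarrow$(iii)$\Rightarrow$(ii). The step you announce as the main obstacle is actually unnecessary: once the total masses converge, Proposition~\ref{prop:equivconvergence} only requires pGw-convergence of the \emph{normalized} marginals $(\tauxNn)_\ast(\mu_N^\circ)^{\otimes n}$, which are probability measures, so the convergence-determining statement of Proposition~\ref{prop:pGwcharacterizations} for $\Pin$ on $(\Hn,\mathrm{pGw})$ suffices; no convergence-determining property of $\Pit$ on $(\Hn,\mathrm{sGw})$ needs to be established.

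The genuine gap is in your step (ii)$\Rightarrow$(i), at the sentence ``Since $\Pi_1$ is convergence determining for finite measures on $\M_1$ \dots hence in particular $\|\mu_N\|\to\|\mu\|$.'' Proposition~\ref{prop:pGwcharacterizations} gives convergence determination only in the sense defined in the paper, i.e.\ for \emph{probability} measures, and for $\Pi_1$ it cannot be upgraded to finite measures: every element of $\Pin$ carries a factor $\gamma$ of the total measure with superpolynomial decay, so all functions in $\mathcal F$ are blind to $\mu_N$-mass sitting at points whose path to the root carries diverging $\nu_N$-mass, and mass of the image measures on $\H_1$ can escape to infinity in the $\|\nu\|$-direction. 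Concretely, take $T=[0,2]$ rooted at $0$, $\smallx=(T,\delta_1,0)$ and $\smallx_N=(T,\delta_1+\delta_2,\,N\,\delta_{3/2})$: any sample vector containing the point $2$ spans a subtree of $\nu_N$-mass $N$, so its total contribution to a $\Psi\in\mathcal F$ built from $\Phi^{\gamma,m,\varphi}\in\Pin$ is at most $2^n\,|\gamma(N)|\,N^m\,\|\varphi\|_\infty\to 0$, whence $\Psi(\smallx_N)\to\Psi(\smallx)$ for every $\Psi\in\mathcal F$, while $\|\mu_N\|=2\not\to 1=\|\mu\|$. So the total-mass convergence, and with it (i), cannot be extracted from $\mathcal F$ by the route you propose (the function $\Phi^{0,1}\equiv 1$, which would detect the mass, is excluded from $\Pin$). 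Note that the paper itself obtains $\|\mu_N\|\to\|\mu\|$ only from $\tilde{\mathcal F}$, via the outer $\gamma$, and its one-line justification of (i)$\Leftrightarrow$(ii) is terse exactly at this point (in its application in the compactness lemma the escape is excluded by the domination $\mu_N\le\mu$, $\nu_N\le\nu$); in any case, as written your argument does not establish (ii)$\Rightarrow$(i).
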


\begin{proof}
The equivalence of \ref{it:lwv} and \ref{it:fconv} is clear, as by Proposition \ref{prop:equivconvergence}, {\rm LWV}-convergence
is equivalent to the convergence of $\|\mu_N\|\to \|\mu\|$ together with
$\left< (\tauxNn)_\ast\left(\mu^\circ_N\right)^{\otimes n} , f\right> \to \left< \left(\tauxn\right) \!_\ast\left(\mu^\circ\right)^{\otimes n} , f\right>$, as $N\to\infty$,
for all $n\in \N$ and for a class of functions $f$ which determine the $n$\nbd pointed Gromov-weak convergence.
Moreover, by Proposition~\ref{prop:pGwcharacterizations}, $\Pin$ is such a convergence determining class.
As $\Psi(\smallx_N)=\left<(\tau_{\smallx_N})_\ast\left(\mu_N\right)^{\otimes n},\Phi\right>$, the claim follows. \sm
\comment{\begin{eqnarray*}
\smallx_N  \toLWV \smallx  & \Leftrightarrow & \left<  \left(\mu^\circ_N\right)^{\otimes n} ,
\Phi^\gamma\circ\tauxNn(\cdot)\right> \to \left< \left(\mu^\circ\right)^{\otimes n} , \Phi^\gamma\circ\tauxn(\cdot) \right>\; \forall n\in \N, \forall \Phi^\gamma\in \Pin \\
& \Leftrightarrow & \Psi(\smallx_N)\to \Psi(\smallx) \; \forall \Psi\in \mathcal F.
\end{eqnarray*}}

By  Proposition~\ref{prop:equivconvergence}, $\tilde{\mathcal F}$ contains only functions which are continuous
with respect to the \LWVtopo, and thus \ref{it:lwv} clearly implies \ref{it:ftconv}. To see that \ref{it:ftconv}
implies \ref{it:fconv}, note that for $\gamma(x):=e^{-x}$, convergence of
$\Psi^{\gamma,0,1}(\smallx_N)=\gamma\(\|\mu_N\|\)$ implies convergence of $\|\mu_N\|$. Hence convergence of
$\tilde\Psi(\smallx_N)$, for all $\tilde{\Psi}\in\tilde{\mathcal F}$, implies convergence of $\Psi(\smallx_N)$,
for all $\Psi\in{\mathcal F}$.
\end{proof}\sm

\begin{proposition}[Convergence determining classes] The following hold:
\begin{enumerate}
\item The class of test functions $\tilde{\mathcal F}$ is convergence determining on $\mathbb H^{f,\sigma}$.
\item The class of test functions $\tilde{\mathcal F}^1$ is convergence determining on $\HK$ for all $K>0$.
\end{enumerate}
\label{prop:convdet}
\end{proposition}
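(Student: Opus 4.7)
The plan is to reduce both parts to an application of Le~Cam's theorem, following the same strategy as in the proof of Proposition~\ref{prop:pGwcharacterizations}. The forward implications are immediate from Lemma~\ref{lem:topinduce}: elements of $\tilde{\mathcal F}$ lie in $\Cb(\Hbi)$ because $\gamma$ decays faster than any polynomial in $\|\mu\|$, and elements of $\tilde{\mathcal F}^1$ lie in $\Cb(\HK)$ because $\|\mu\|\le K$ there. For the reverse direction I will verify that each class is multiplicatively closed and separates points, and then handle tightness.

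For multiplicative closure, given $\Psi_i(\smallx)=\gamma_i(\|\mu\|)\int_{T^{n_i}}\mu^{\otimes n_i}(\mathrm{d}\uvec_i)\,\tilde\Phi_i(\tauxn(\uvec_i))$ with $\tilde\Phi_i\in\widetilde\Pi_{n_i}$ for $i=1,2$, Fubini yields
\begin{equation*}
\Psi_1(\smallx)\Psi_2(\smallx)=(\gamma_1\gamma_2)(\|\mu\|)\int\mu^{\otimes(n_1+n_2)}\bigl(\mathrm{d}(\uvec_1,\uvec_2)\bigr)\,\tilde\Phi_1(\tauxn(\uvec_1))\,\tilde\Phi_2(\tauxn(\uvec_2)),
\end{equation*}
and since $\tauxn(\uvec_1)=\pi^{n_1+n_2}_{\{1,\dots,n_1\}}\bigl(\tauxn(\uvec_1,\uvec_2)\bigr)$ with the analogous identity for $\uvec_2$, the combined integrand fits the template of a function in $\widetilde\Pi_{n_1+n_2}$. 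The product $\gamma_1\gamma_2$ preserves the polynomial-decay condition, so $\tilde{\mathcal F}$, as well as the sub-class $\tilde{\mathcal F}^1$ obtained by setting $\gamma_i\equiv1$, is closed under products.

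Point separation follows from the Reconstruction Theorem (Proposition~\ref{prop:reconstruction}): if $\smallx\ne\smallx'$ in $\Hbi$, then $\varpi^\smallx\ne\varpi^{\smallx'}$, and so some $n$-th marginal $(\tauxn)_\ast(\mu^\circ)^{\otimes n}$ differs. Since $\widetilde\Pi_n$ induces the sGw-topology on $\Hn$ and is measure-determining on $\mathcal M_f(\Hn)$ (by its own multiplicative closure together with Proposition~\ref{prop:pGwcharacterizations} applied to each projection $\pi^n_I$), there exists $\tilde\Phi\in\widetilde\Pi_n$ that witnesses the difference; the function $\Psi^{1,n,\tilde\Phi}\in\tilde{\mathcal F}^1\subseteq\tilde{\mathcal F}$ then separates $\smallx$ from $\smallx'$.

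The main technical step will be tightness of a sequence $(P_N)$ of laws for which all test integrals converge. For (ii), uniform boundedness $\|\mu\|\le K$ on $\HK$ combined with tightness of the $n$-th marginals of $\varpi^\smallx$ in $\mathcal M_f(\Hn,\mathrm{pGw})$ (which is a Polish space by Proposition~\ref{prop:pGwcharacterizations}, and where tightness is forced by convergence along $\widetilde\Pi_n$) lifts via the Reconstruction map to tightness on $\HK$ for the product-topology embedding $\smallx\mapsto(\varpi^\smallx)_n$. For (i), the additional difficulty of unbounded mass is absorbed by the rapid-decay functions $\gamma$: convergence of $\E_{P_N}[\gamma(\|\mu\|)\|\mu\|]$ for $\gamma(x)=\mathrm{e}^{-\lambda x}$, $\lambda>0$, forces the laws of $\|\mu\|$ under $P_N$ to be tight on $\R_+$, so one can decompose onto the sets $\HK$ and invoke (ii). With multiplicative closure, point separation, and tightness in hand, Le~Cam's theorem (\cite{LeCam57}, \cite[Lem.~4.1]{HoffmannJorgensen1977}) concludes the argument in both cases.
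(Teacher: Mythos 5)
Your verification of multiplicative closedness (Fubini plus the observation that $\uvec_1$ and $\uvec_2$ are sublists of $(\uvec_1,\uvec_2)$, so the product of the integrands again lies in $\Pit$) is exactly the computation the paper needs, and the forward implications are fine. The gap is in the logical backbone of the converse. You replace the hypothesis that is actually available -- by Lemma~\ref{lem:topinduce} the class $\tilde{\mathcal F}$ \emph{induces} the LWV-topology, which is separable and metrizable by Corollary~\ref{cor:metrizable} -- by the weaker property of merely separating points, and then try to compensate with a tightness argument. Point separation together with multiplicative closure is not the hypothesis of Le~Cam's theorem or of its extension (Theorem~6 of \cite{BloKour2010} requires the class to induce the topology; the Ethier--Kurtz variant requires strong separation of points), and point separation plus tightness only yields subsequential limits that must still be identified, for which a measure-determining property would again be needed. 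Worse, the tightness step itself does not work as sketched: tightness of the laws on $(\HK,\mathrm{LWV})$ cannot be ``lifted via the Reconstruction map'' from tightness of marginals in $\CM_f(\Hn,\mathrm{pGw})$, because the image of $\HK$ under $\smallx\mapsto\varpi^\smallx$ is not known to be closed in $\CM_f\bigl(\prod_n\Hn\bigr)$ -- the paper explicitly states that no complete metric for the LWV-topology is available -- so compact sets upstairs need not pull back to compact sets of $\HK$; and the claim that ``tightness is forced by convergence along $\Pit$'' is unsubstantiated, since convergence of the test integrals is precisely what is to be upgraded. The reduction of (i) to (ii) by ``decomposing onto the sets $\HK$'' is likewise not justified: restricting or conditioning the laws to $\HK$ does not preserve the assumed convergence of the test integrals.

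All of this machinery is unnecessary, and avoiding it is the point of the paper's argument: $\tilde{\mathcal F}$ consists of bounded continuous functions, is multiplicatively closed (your computation), and induces the separable, metrizable LWV-topology by Lemma~\ref{lem:topinduce} and Corollary~\ref{cor:metrizable}; hence Theorem~6 of \cite{BloKour2010}, the separable-metrizable extension of Le~Cam's theorem, applies directly, with no tightness, completeness or Polishness assumptions entering anywhere. The second statement follows in the same way, the only additional remark being that functions in $\tilde{\mathcal F}^1$ are bounded on $\HK$. A further slip in your point-separation paragraph: $\tilde{\mathcal F}^1\not\subseteq\tilde{\mathcal F}$, since $\gamma\equiv1$ violates the decay condition, and elements of $\tilde{\mathcal F}^1$ are not bounded on all of $\Hbi$, so $\Psi^{1,n,\tilde\Phi}$ cannot serve as a separating test function on $\Hbi$ without inserting a decaying factor $\gamma$.
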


\begin{proof}
We apply Theorem~6 from \cite{BloKour2010}, a slight extension of Le~Cam's theorem (see \cite{LeCam57}) in the
separable, metrizable case: if a set of bounded real-valued functions is multiplicatively closed and induces a
separable, metrizable topology, then it is a convergence determining class with respect to this topology.
By Lemma~\ref{lem:topinduce}, $\tilde{\mathcal F}$ induces the {\rm LWV}-topology, which is separable,
metrizable by Corollary~\ref{cor:metrizable}. We therefore need to verify that if $\tilde\Psi_1$,
$\tilde\Psi_2\in \Ft$, then $\tilde\Psi_1 \cdot\tilde\Psi_2 \in \Ft$.
Let $\Psit_i=\Psi^{\gamma_i,n_i,\tilde\Phi_i}$ for some $n_i\in\N_0$, $\gamma_i\in\Cb(\R_+)$ with
$\lim_{x\to\infty}x^k\gamma_i(x)=0$, for all $k\in \N$, and $\tilde{\Phi}_i\in\Pit$, $i=1,2$. Then
\begin{equation}
\begin{aligned}
   &\Psi^{\gamma_1,n_1,\tilde\Phi_1}\cdot\Psi^{\gamma_2,n_2,\tilde\Phi_2}(\smallx)
   \\
 &=
   \bigl(\gamma_1\gamma_2\bigr)(\|\mu\|)\cdot\int_{T^{n_1+n_2}} \mu^{\otimes (n_1+n_2)}(\mathrm{d}\uvec_1,\mathrm{d}\uvec_2)\,
   \tilde\Phi_1\bigl( \rspan{\uvec_1} , \uvec_1, \nu \bigr)\tilde\Phi_2\bigl( \rspan{\uvec_2} , \uvec_2, \nu \bigr).
\end{aligned}
\end{equation}
For $\uvec=(\uvec_1,\uvec_2)$, let $\Phit\left( \rspan{\uvec} , \uvec, \nu \right) :=
\Phit_1\left( \rspan{\uvec_1}, \uvec_1, \nu \right) \cdot \Phit_2\left( \rspan{\uvec_2} , \uvec_2, \nu \right)$.
As $\uvec_1$ and $\uvec_2$ are sublists of $\uvec$, $\Phit\in\Pit$ and therefore $\tilde\Psi_1
\cdot\tilde\Psi_2 \in \Ft$.\sm

To get the second statement in the same way, note that functions $\Psitpol[1]\in \tilde{\mathcal F}^1$ are
bounded on $\mathbb H^{K,\sigma}$.
\end{proof}\sm

An important fact about the \LWVtopo\ is that Gromov-weak convergence of measure \Rtree s implies \LWVconv\ if
the trees are additionally equipped with their respective length measures (see Example~\ref{ex:lengthmeasure}
for a definition of length measure and Proposition~\ref{prop:lengthconv} for the statement).
We obtain the same also for a slightly more general class of measures. Given a family $(T_i, \mu_i)_{i\in I}$ of
measure \Rtree s, we say that a family $(\nu_i)_{i\in I}$ of measures on respective $T_i$
\emph{depends continuously on the distances} if, for all $n\in\N$, there exists a continuous mapping
$F_n\colon\R^{\binom{n+1}{2}}\to \Hn$, where $\Hn$ is endowed with the \pGwtopo, such that
\begin{equation}\label{eq:lengthdepend}
	\left(\rspanu , \uvec, \nu_i\right)=F_n\(R^{T_i}(\uvec)\), \;\; \forall \uvec \in T_i^n,\;\forall i\in I.
\end{equation}

\begin{example}[Length measure] The length measure, $\lambda_T$, on a separable $0$-hyperbolic and connected metric space $T$
generalizes the Lebesgue measure on $\R$ in an obvious way (compare~\cite{EvaPitWin2006}).
Recall the set of leaves of $T$ from (\ref{e:leaves}). The length measure can  be defined by the following two
requirements:
\begin{equation}
\label{e:length}
    \forall x,y\in T :\, \lambda_T([x,y])=r(x,y) \;\text { and }\; \lambda_T\({\rm Lf}(T)\)=0.
\end{equation}
Obviously, the family of length measures $(\lambda_T)_{T\in \{\text{\Rtree s}\}}$ depends continuously on the distances.
The same is true if we replace $\lambda_T$ by $\nu_T=f_T\cdot \lambda_T$, where $f_T$ is a density that depends
only on the height, i.e., $f_T(v):=h\(r(\rho, v)\)$ for a bounded measurable function $h$ (which does not depend
on $T$).
\label{ex:lengthmeasure}
\end{example}\sm

We can relax the continuity of the $F_n$, $n\in\N$, a little.
Let $(T, \mu)\in \H$. We say that a family $(\nu_i)_{i\in I}$ as above \emph{depends\/ \bnuTas\ continuously on
the distances} if it satisfies \eqref{eq:lengthdepend} with functions $F_n$ that are not necessarily continuous,
but where the set of discontinuity points is a null set with respect to the distance matrix distribution induced
by $(T, \mu)$, i.e.\ $(R^{T})_\ast\mu^{\otimes n}({\rm Discont}(F_n))=0$.

\begin{proposition}[LWV-convergence from Gromov-weak convergence]
	Consider a sequence\/ $(\smallx_N)_{N\in\mathbb{N}}:=(T_N,\mu_N,\nu_N)_{N\in\mathbb{N}}$ and\/
	$\smallx_\infty:=(T_\infty,\mu_\infty,\nu_\infty)$ in $\H^{f,\sigma}$ such that the measures
	$\nu_\infty,\nu_1,\nu_2,...$ depend\/ \bnuTinfas\ continuously on the distances.

	If\/  $(T_N,\mu_N)\toGwN (T_\infty,\mu_\infty)$, then
	\begin{equation}\label{e:033}
		(T_N,\mu_N,\nu_N)  \toLWVN  (T_\infty,\mu_\infty,\nu_\infty).
	\end{equation}
	In particular, the embedding defined by
	\begin{equation} \label{e:034}
		\begin{array}{ccc}
		\H&\to& \Hbi, \\ (T, \mu) &\mapsto& (T, \mu, \lambda_T),
		\end{array}
	\end{equation}
	where $\lambda_T$ is the length measure, is a homeomorphism onto its image.
\label{prop:lengthconv}
\end{proposition}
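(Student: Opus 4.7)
The plan is to reduce \LWVconv\ to convergence of push-forwards of the distance matrix distributions via the continuity assumption, and then invoke the (a.s.\nbd continuous) continuous mapping theorem.

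First I would apply Proposition~\ref{prop:equivconvergence}, which says that \LWVconv\ of $(T_N,\mu_N,\nu_N)$ to $(T_\infty,\mu_\infty,\nu_\infty)$ is equivalent to $\|\mu_N\|\to\|\mu_\infty\|$ together with
\begin{equation*}
	(\tauxNn)_\ast(\mu^\circ_N)^{\otimes n}\TNo(\tauxn[\smallx_\infty])_\ast(\mu^\circ_\infty)^{\otimes n}
	\quad\text{in the \pGwtopo,}
\end{equation*}
for every $n\in\mathbb{N}$. Convergence of total mass $\|\mu_N\|\to\|\mu_\infty\|$ is immediate from Gromov-weak convergence of $(T_N,\mu_N)$. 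Hence the whole task is to check the above \pGwconv\ of push-forwards for each $n$.

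The key observation is that, by the definition of continuous dependence on the distances~\eqref{eq:lengthdepend}, the random variable $\tauxNn(\uvec)$ factorizes as $F_n\(R^{T_N}(\uvec)\)$, and analogously for $\smallx_\infty$. Therefore, writing $\nu^{(n)}_N:=(R^{T_N})_\ast(\mu^\circ_N)^{\otimes n}\in\CM_1(\R_+^{\binom{n+1}{2}})$ for the (normalized) $n$-point distance matrix distribution, we have
\begin{equation*}
	(\tauxNn)_\ast(\mu^\circ_N)^{\otimes n}=(F_n)_\ast\nu^{(n)}_N.
\end{equation*}
The Gromov-weak convergence of $(T_N,\mu_N)$ to $(T_\infty,\mu_\infty)$ directly yields $\nu^{(n)}_N\TNo\nu^{(n)}_\infty$ for every $n$ (this is Definition~\ref{Def:002} for $n=0$, combined with the fact that the $n$-point marginal of $\bnu^{(T_\infty,\mu_\infty)}$ equals $\|\mu_\infty\|\cdot\nu^{(n)}_\infty$). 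Since, by the $\bnuTinf$\nbd a.s.\ continuous dependence assumption, the discontinuity set of $F_n$ is a $\nu^{(n)}_\infty$\nbd null set, the continuous mapping theorem (in its a.s.\ version, see e.g.~\cite[Theorem~2.7]{Billingsley1999}) yields $(F_n)_\ast\nu^{(n)}_N\Rightarrow (F_n)_\ast\nu^{(n)}_\infty$ weakly on $(\Hn,\mathrm{pGw})$, which is precisely what was needed. This proves~\eqref{e:033}.

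For the "in particular" statement, I would first verify that the family of length measures $(\lambda_T)_T$ depends continuously on the distances, as already noted in Example~\ref{ex:lengthmeasure}, so the first part of the proposition applies and gives continuity of the embedding $(T,\mu)\mapsto (T,\mu,\lambda_T)$. The converse direction is immediate from Remark~\ref{Rem:004}\,(i): if $(T_N,\mu_N,\lambda_{T_N})$ converges in the \LWVtopo\ to $(T_\infty,\mu_\infty,\lambda_{T_\infty})$, then $(T_N,\mu_N)$ converges Gromov-weakly to $(T_\infty,\mu_\infty)$. Combined with injectivity of the map, this gives the claimed homeomorphism onto the image.

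The step I expect to require the most care is the application of the a.s.\nbd continuous version of the continuous mapping theorem: one must be sure that $F_n$ is a measurable map into $(\Hn,\mathrm{pGw})$ and that its discontinuity set is $\nu^{(n)}_\infty$\nbd negligible; this is exactly what the definition of \bnuTinfas\ continuous dependence was set up to provide, but it is the only place where the difference between "continuous" and "a.s.\ continuous" matters. Everything else is a translation between the distance matrix picture and the tree picture via the definition of $\tauxn$.
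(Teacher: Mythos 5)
Your proposal is correct and follows essentially the same route as the paper's proof: both factor $\tauxn$ through $F_n\circ R^{T_N}$, deduce convergence of the $n$-point distance matrix distributions from the Gromov-weak hypothesis, apply the continuous mapping theorem with the \bnuTinfas\ continuity assumption handling the null discontinuity set, and conclude via Proposition~\ref{prop:equivconvergence}. The only difference is presentational (push-forward measures versus random vectors $\Uvec_N$, and a slightly more explicit treatment of the ``in particular'' statement), which does not change the argument.
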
\sm

\begin{proof}
	Given $n\in\N$, fix a function $F_n\colon\R_+^{n+1\choose 2}\to\Hn$ as in \eqref{eq:lengthdepend}, such
	that the set of discontinuity points of $F_n$ is a zero set with respect to $(R^{T_\infty})_*(\mu_\infty^{\otimes n})$.
	For $N\in\N\cup\{\infty\}$, let $\Uvec_N$ be a random vector in $T_N^n$ with distribution
	$(\mu_N^\circ)^{\otimes n}$. Then the assumed Gromov-weak convergence means that $\|\mu_N\| \to
	\|\mu_\infty\|$ and
	\begin{equation}
		R^{T_N}(\Uvec_N) \tolN R^{T_\infty}(\Uvec_\infty),
	\end{equation}
	where $\tol$ denotes convergence in law. By the continuous mapping theorem (see Theorem~5.1 in
	\cite{Billingsley1999}), we obtain
	\begin{equation}
		\(\rspanUN,\Uvec_N, \nu_N\) = F_n\(R^{T_N}(\Uvec_N)\) \tolN F_n\(R^{T_\infty}(\Uvec_\infty)\)
		= \(\rspanUN[\infty], \Uvec_\infty, \nu_\infty\).
	\end{equation}
	Using that $\(\rspanUN,\Uvec_N, \nu_N\)$ has law $(\tauxNn)_*\((\mu_N^\circ)^{\otimes n}\)$ for
	$N\in\N\cup\{\infty\}$, the claimed LWV-convergence $\smallx_N\toLWV \smallx_\infty$ now follows from
	Proposition~\ref{prop:equivconvergence}. That \eqref{e:034} defines a homeomorphism onto its image is
	now obvious, because the length measure depends continuously on the distances (see
	Example~\ref{ex:lengthmeasure}).
\end{proof}\sm

\begin{cor}[Sampling measure perturbation]
	Consider two sequences of bi-measure \Rtree s $\smallx_N^i:=(T_N,\mu^i_N,\nu_N)$, $i=1,2$ that differ by
	their sampling measures $\mu^1_N$ and $\mu^2_N$. Assume that $\smallx^1_N \toLWVN \smallx$, and that the
	pruning measures $(\nu_N)_{N\in\N}$ depend\/ \bnuTas\ continuously on the distances.
	If\/ $d_\mathrm{Pr}(\mu^1_N,\mu^2_N)\tNo 0$, then also  $\smallx^2_N\toLWVN\smallx$.
\label{cor:changemu}
\end{cor}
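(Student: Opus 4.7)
The strategy is to reduce to Proposition~\ref{prop:lengthconv} by upgrading the two hypotheses---{\rm LWV}-convergence of $\smallx^1_N$ and Prohorov-closeness of $\mu^2_N$ to $\mu^1_N$---into Gromov-weak convergence of the modified sequence $(T_N,\mu^2_N)$ to $(T,\mu)$, where $\smallx = (T,\mu,\nu)$, and then re-invoking that proposition for $\smallx^2_N$.

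First, by Remark~\ref{Rem:004}(i), the hypothesis $\smallx^1_N \toLWVN \smallx$ yields Gromov-weak convergence $(T_N,\mu^1_N) \to (T,\mu)$. Next, I would show that the pointed Gromov-Prohorov distance between $(T_N,\mu^1_N)$ and $(T_N,\mu^2_N)$ is controlled by the Prohorov distance on $T_N$ itself. Using the metric $d_\eps(x,x') := r_N(x,x') + \eps$ for $x$, $x'$ lying in different copies of the disjoint union $T_N \uplus T_N$ in Definition~\ref{def:pGP}, and letting $\eps \to 0$, I obtain
\begin{equation*}
	d_\mathrm{pGP}\bigl((T_N,\mu^1_N),(T_N,\mu^2_N)\bigr) \le d_\mathrm{Pr}(\mu^1_N,\mu^2_N) \tNo 0.
\end{equation*}
Combining this with the triangle inequality for $d_\mathrm{pGP}$ (a metric for the \pGwtopo\ by Proposition~\ref{prop:pGwcharacterizations}) yields $(T_N,\mu^2_N) \to (T,\mu)$ Gromov-weakly.

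It then remains to invoke Proposition~\ref{prop:lengthconv}. Since $(\nu_N)$ depends $\bnuT$-almost surely continuously on the distances via some functions $F_n$, and $\bnuT$ is the distance matrix distribution of the Gromov-weak limit $(T,\mu)$, the proposition applied to $\smallx^2_N = (T_N,\mu^2_N,\nu_N)$ produces {\rm LWV}-convergence towards a bi-measure \Rtree\ of the form $(T,\mu,\tilde\nu)$, with $\tilde\nu$ characterized on finite subtrees through the $F_n$ and \eqref{eq:lengthdepend}. Applying the same proposition to $\smallx^1_N$ and using uniqueness of the {\rm LWV}-limit (Proposition~\ref{prop:reconstruction}) forces $\tilde\nu = \nu$, so that $\smallx^2_N \toLWVN \smallx$. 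I do not expect a substantive obstacle; the only slightly delicate point is the identification $\tilde\nu = \nu$, which is handled by comparing the two applications of Proposition~\ref{prop:lengthconv}.
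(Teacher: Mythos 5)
Your proposal is correct and follows essentially the same route as the paper's own proof: deduce Gromov-weak convergence of $(T_N,\mu^2_N)$ to $(T,\mu)$ from the {\rm LWV}-convergence of $\smallx^1_N$ together with the fact that $\mu^1_N,\mu^2_N$ live on the same space (so the Prohorov bound transfers to $d_{\rm pGP}$), then apply Proposition~\ref{prop:lengthconv} and identify the limiting pruning measure with $\nu$ by uniqueness of {\rm LWV}-limits. Your explicit $\eps$-perturbed metric on the disjoint union is just a more detailed version of the step the paper states by referring to \eqref{e:GPdistance}.
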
\sm

\begin{proof}
	As  $\smallx^1_N\toLWVN\smallx$, implies that $(T_N,\mu^1_N)\tNo(T,\mu)$ in the Gw-topology, we get
	$d_{\mathrm{pGP}}((T_N,\mu^1_N),(T,\mu))\tNo 0$ by Proposition~\ref{prop:pGwcharacterizations}.
	Since $\mu^1_N$ and $\mu^2_N$ are defined on the same space $T_N$, the latter implies that also
	\begin{equation}\label{e:011}
		\lim_{N\rightarrow\infty}d_{\rm pGP}\bigl((T_N,\mu^2_N),(T,\mu)\bigr)=0,
	\end{equation}
	(compare (\ref{e:GPdistance})).
	Proposition \ref{prop:lengthconv} allows us to endow these metric measure spaces with the associated
	measures $\nu_N$ and some $\nu_\infty$ on $T$, defined by \eqref{eq:lengthdepend}.
	Because of uniqueness of {\rm LWV}\nbd limits, we have $(T,\mu,\nu_\infty)=(T,\mu,\nu)$.
\end{proof}\sm

\begin{figure}
\scalebox{1} 
{
\begin{pspicture}(0,-0.538125)(9.262813,0.538125)
\definecolor{color86}{rgb}{1.0,0.0,0.2}
\definecolor{color100}{rgb}{0.0,0.0,0.8}
\psline[linewidth=0.04cm](9.0135145,0.00870204)(0.48356554,0.00870204)
\psdots[dotsize=0.12,dotangle=-90.0](8.980707,0.00870204)
\psdots[dotsize=0.12,dotangle=-90.0](0.45075804,0.00870204)
\psdots[dotsize=0.12,dotangle=-90.0](4.7813473,0.00870204)
\usefont{T1}{ptm}{m}{n}
\rput(3.2923439,-0.3103125){$x^2_N$}
\usefont{T1}{ptm}{m}{n}
\rput(4.762344,0.2896875){$x$}
\usefont{T1}{ptm}{m}{n}
\rput(8.952344,0.3496875){$y$}
\usefont{T1}{ptm}{m}{n}
\rput(0.43234375,0.2896875){$\rho$}
\usefont{T1}{ptm}{m}{n}
\rput(6.2323437,-0.3103125){$x^1_N$}
\psdots[dotsize=0.14,fillstyle=solid,dotstyle=o](3.30501,0.00870204)
\psdots[dotsize=0.2,linecolor=color86,dotstyle=x](3.30501,0.00870204)
\psdots[dotsize=0.14,fillstyle=solid,dotstyle=o](6.2576847,0.00870204)
\psdots[dotsize=0.2,linecolor=color100,dotstyle=x](6.2576847,0.00870204)
\psline[linewidth=0.04cm,arrowsize=0.05291667cm 2.0,arrowlength=1.4,arrowinset=0.4]{->}(3.3378177,0.1596875)(4.3409376,0.1596875)
\psline[linewidth=0.04cm,arrowsize=0.05291667cm 2.0,arrowlength=1.4,arrowinset=0.4]{<-}(5.1009374,0.1596875)(6.2609377,0.1596875)
\end{pspicture}
}
\caption{The tree $T_N$ with the two sequences $x^1_N$ and $x^2_N$ that converge to $x$.}\label{fig:counterexample}
\end{figure}

\begin{example}[Counterexample]
We cannot extend the result of Corollary~\ref{cor:changemu} to pruning measures which do not depend only on the distances.

As illustrated in Figure~\ref{fig:counterexample}, we consider a constant rooted metric space $T$ and two fixed points $x,y\in T$ such that $x\in[\rho,y]$.
We construct two sequences of points $(x^1_N)_{N\in\N}$ and $(x^2_N)_{N\in \N}$ that converge to $x$, the first
from above, the second from below; i.e.\ $x^1_N\in [x,y]$ and $x^2_N\in [\rho,x]$ for all $N\in\N$, and $r(x^i_N,x)\tNo 0$ for $i=1,2$. We then define the two sequences of measures $\mu^i_N:=\frac 12\delta_{x^i_N}+\frac 12\delta_y$ for $i=1,2$ and a constant measure $\nu_N=\nu=\delta_x$. Clearly,
$\smallx^1_N\toLWVN\smallx$ and $d_\mathrm{Pr}(\mu^1_N,\mu^2_N)\tNo 0$,
 but the sequence $(T,\mu^2_N,\nu)$ does not converge, since the subtree $[\rho,x^2_N]$ never contains the point
 $x$, except at the limit. Thus $([\rho,x_N^2],\{x_N^2\},\nu)$ does not converge pointed Gromov-weakly.
\end{example}\sm

\begin{lemma}[Sum of pruning measures]
	Let\/ $\smallx^i_N=(T_N,\mu_N,\nu^i_N)\in\Hbi$ with\/
		$(T_N,\mu_N,\nu^i_N) \toLWV \smallx^i=(T,\mu,\nu^i)\in \Hbi$, as $N\to\infty$,
	for $i=1,2$. If\/ $(\nu^1_N)_{N\in \N}$ depends\/ \bnuTas\ continuously on the distances, we obtain
	\begin{equation}
		(T_N,\mu_N,\nu^1_N+\nu^2_N) \toLWVN (T,\mu,\nu^1+\nu^2).
	\end{equation}
\label{lem:sum}
\end{lemma}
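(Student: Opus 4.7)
The plan is to verify the second characterization of LWV-convergence in Proposition~\ref{prop:equivconvergence}: fixing $n\in\N$ and letting $\Uvec_N\sim(\mu_N^\circ)^{\otimes n}$ and $\Uvec_\infty\sim(\mu^\circ)^{\otimes n}$ be i.i.d., it suffices to show that $\(\rspanUN,\Uvec_N,\nu^1_N+\nu^2_N\)$ converges in law to $\(\rspanUN[\infty],\Uvec_\infty,\nu^1+\nu^2\)$ in the \pGwtopo.

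By Proposition~\ref{prop:equivconvergence} applied to $\smallx^2_N\toLWV\smallx^2$, $\(\rspanUN,\Uvec_N,\nu^2_N\)$ converges in law to $\(\rspanUN[\infty],\Uvec_\infty,\nu^2\)$ in the \pGwtopo. Since $\Mn$ is separable and metrizable, Skorohod's representation theorem lets me realize this convergence almost surely on a common probability space; in particular $R^{T_N}(\Uvec_N)\to R^T(\Uvec_\infty)$ almost surely. The \bnuTas\ continuous dependence of $(\nu^1_N)_{N\in\N}$ on distances provides a common function $F^1_n$ (continuous outside a $(R^T)_\ast\mu^{\otimes n}$-null set, which is also an $(R^T)_\ast(\mu^\circ)^{\otimes n}$-null set) with $\(\rspan{\uvec},\uvec,\nu^1_N\)=F^1_n\(R^{T_N}(\uvec)\)$ for all $N$ and $\uvec\in T_N^n$. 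As in Proposition~\ref{prop:lengthconv}, the same $F^1_n$ consistently defines the corresponding measure on the limit tree $T$, which by uniqueness of LWV-limits (Proposition~\ref{prop:reconstruction}) together with the assumption $\smallx^1_N\toLWV\smallx^1$ must coincide with $\nu^1$. The continuous mapping theorem then yields almost sure pGw-convergence of $\(\rspanUN,\Uvec_N,\nu^1_N\)$ to $\(\rspanUN[\infty],\Uvec_\infty,\nu^1\)$ on this same probability space with identical samples.

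I now adopt the strategy of the proof of Lemma~\ref{lem:lemma42}. For $N$ sufficiently large, Remark~\ref{Rem:003} yields a tree homomorphism $f_N\colon\rspanUN\to\rspanUN[\infty]$ with $f_N(U_{N,k})=U_{\infty,k}$ and $\dis(f_N)\to 0$, depending only on the (common) pointed tree structure. By \cite[Corollary~7.3.28]{BurBurIva01}, I choose a single metric $d_N$ on $\rspanUN\uplus\rspanUN[\infty]$ extending $r_N$ and $r$ with $d_N(x,f_N(x))\to 0$ uniformly in $x$. Then $d_{N,\mathrm{Pr}}\((f_N)_\ast\nu^i_N,\nu^i_N\)\to 0$, and combining with the almost sure pGw-convergence for each $i$ (exactly as in the proof of Lemma~\ref{lem:lemma42}) yields $d_{N,\mathrm{Pr}}(\nu^i_N,\nu^i)\to 0$ for both $i\in\{1,2\}$ under the common $d_N$.

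Finally, sub-additivity of the Prohorov distance under sums of measures---immediate from~\eqref{e:dPr} with a loss factor of at most $2$---gives $d_{N,\mathrm{Pr}}(\nu^1_N+\nu^2_N,\nu^1+\nu^2)\to 0$. Combined with $d_N(U_{N,k},U_{\infty,k})\to 0$ (itself a consequence of $d_N(x,f_N(x))\to 0$), this yields almost sure pGw-convergence of $\(\rspanUN,\Uvec_N,\nu^1_N+\nu^2_N\)$ to $\(\rspanUN[\infty],\Uvec_\infty,\nu^1+\nu^2\)$, whence the desired convergence in law. The main obstacle is obtaining a \emph{single} metric $d_N$ that serves both pruning measures simultaneously; this is feasible because $f_N$ depends only on the common pointed tree structure, while the continuous dependence hypothesis on $\nu^1_N$ is precisely what allows me to couple both Skorohod convergences on identical samples without a second (possibly incompatible) coupling.
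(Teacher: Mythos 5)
Your proof is correct and takes essentially the same route as the paper: fix $n$, use Skorohod representation of the convergence coming from $\smallx^2_N\toLWV\smallx^2$ to couple the samples, use the \bnuTas\ continuous dependence of $(\nu^1_N)_{N\in\N}$ and the continuous mapping theorem to obtain almost sure pGw-convergence of the $\nu^1$-decorated spanned subtrees along the same samples, then transport both pruning measures with the comparison maps $f_N$ of Remark~\ref{Rem:003}, add them, and conclude with Proposition~\ref{prop:equivconvergence}. The only (cosmetic) differences are that you combine the two limits via subadditivity of the Prohorov distance in one common ambient metric where the paper simply adds the weak limits $(f_N)_\ast\nu^i_N\Rightarrow\nu^i$, and that you justify explicitly, via uniqueness of LWV-limits, the identification of $F_n\circ R^T(\Uvec)$ with $\tauxn[\smallx^1](\Uvec)$, which the paper's display \eqref{e:012} asserts directly.
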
\sm

\begin{proof}
Fix $n\in\N$. Because $(\nu^1_N)_{N\in \N}$ depends \bnuTas\ continuously on the distances, we can choose $F_n$ as in
	\eqref{eq:lengthdepend}. Let $\Uvec_N$, $\Uvec$ be random variables with distribution $\mun[N]$, $\mun$,
	respectively. By the \LWVconv\ and the Skorohod representation theorem, we can couple them such that
	$\tauxn[\smallx_N^2](\Uvec_N) \topGwN \tauxn[\smallx^2](\Uvec)$, a.s., which implies $R^{T_N}(\Uvec_N) \tNo R^T(\Uvec)$.
	Because $R^T(\Uvec)$ is a.s.\ a continuity point of $F_n$, we also have
	\begin{equation}
\label{e:012} \tauxn[\smallx_N^1](\Uvec_N)=F_n\circ R^{T_N}(\Uvec_N)
			\,\topGwN\, F_n\circ R^T(\Uvec)=\tauxn[\smallx^1](\Uvec), \,\text{ a.s.}
	\end{equation}

As explained in Remark~\ref{Rem:003}, we can define functions $f_N \colon \rspanUN \to \rspanU$ such that
	a.s.\ $f_N(\Uvec_N) = \Uvec$ for large enough $N$, $\dis(f_N) \tNo 0$, and $(f_{N})_\ast(\nu_N^i) \TNo \nu^i$.
	Then also $f_{N*}(\nu_N^1+\nu_N^2) \TNo \nu^1 + \nu^2$, which implies
	$\(\rspanUN, \Uvec_N, \nu_N^1+\nu_N^2\) \topGwN \(\rspanU, \Uvec, \nu^1+\nu^2\)$, a.s.
	By Proposition~\ref{prop:equivconvergence}, this implies the claimed \LWVconv.
\end{proof}\sm

\begin{remark}[Assumption on  \bnuTas\ continuity is important]
	In Lemma~\ref{lem:sum}, we cannot drop the assumption that one of the measures depends\/ \bnuTas\ continuously
	on the distances, because then we cannot use the same coupling of $\Uvec_N$ to get almost sure convergence of
	$\tauxn[\smallx_N^i](\Uvec_N)$ for $i=1$ and for $i=2$.
\label{Rem:014}
\end{remark}\sm

If we get {\rm LWV}-convergence of a sequence of bi-measure \Rtree s, the following lemma asserts that
the limit is stable under a small perturbation of $\nu_N$ in a certain sense.

\begin{lemma}[Pruning measure perturbation] 
Consider two sequences of bi-measure \Rtree s $\smallx_N^i:=(T_N,\mu_N,\nu^i_N)$, $i=1,2$ that
differ by their pruning measures $\nu^1_N$ and\/ $\nu^2_N$. If the two pruning measures are Prohorov merging on subtrees sampled
by\/ $\mu_N^{\otimes n}$, i.e.,
\begin{equation}\label{lem:changenu:equiv}
\lim_{N\rightarrow \infty}d_\mathrm{Pr}\({\nu^1_N}\restricted{\rspan{U_N^n}},\,{\nu^2_N}\restricted{\rspan{U_N^n}}\)=0,\;\; \text{$\mu^{\otimes
n}_N$-a.s. },\; \forall n\in \mathbb N,
\end{equation}
then
$\smallx^1_N \toLWVN \smallx$, for some $\smallx=(T,\mu,\nu)$, implies  $\smallx^2_N \toLWVN \smallx$.
\comment{	\[(T_N,\mu_N,\nu^2_N)\underset{N\rightarrow \infty}{\stackrel{{\rm LWV}}{\longrightarrow}} (T,\mu,\nu).\]}
\label{lem:changenu}
\end{lemma}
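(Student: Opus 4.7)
The plan is to reduce the statement to Proposition~\ref{prop:equivconvergence}\ref{it:separate}: it suffices to show, for every fixed $n\in\N$, that
\begin{equation}
	(\tauxNn[N])_\ast\bigl(\mu_N^\circ\bigr)^{\otimes n}
	\,\underset{N\to\infty}{\stackrel{\rm pGw}{\Longrightarrow}}\,
	(\tauxn)_\ast\bigl(\mu^\circ\bigr)^{\otimes n},
\end{equation}
where $\tauxNn[N]$ is built from $\nu^2_N$ and $\tauxn$ from $\nu$. Fix therefore $n\in\N$ and, by Skorohod's representation theorem, couple i.i.d.\ samples $\Uvec_N=(U_{N,1},\dots,U_{N,n})$ from $\mu^\circ_N$ and $\Uvec=(U_1,\dots,U_n)$ from $\mu^\circ$ so that, using the hypothesis $\smallx^1_N\toLWV\smallx$,
\begin{equation}
	\bigl(\rspanUN,\Uvec_N,\nu^1_N\bigr)
	\,\underset{N\to\infty}{\stackrel{\rm pGw}{\longrightarrow}}\,
	\bigl(\rspanU,\Uvec,\nu\bigr),\qquad\text{a.s.}
\end{equation}

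The key observation is that the two bi-measure trees $(\rspanUN,\Uvec_N,\nu^1_N)$ and $(\rspanUN,\Uvec_N,\nu^2_N)$ are both built on the same underlying pointed metric space $\rspanUN$. Hence, by choosing the trivial extension of the metric (identifying $\rspanUN$ with itself in the infimum in \eqref{e:GPdistance}), I obtain the clean bound
\begin{equation}
	d_{\rm pGP}\Bigl(\bigl(\rspanUN,\Uvec_N,\nu^1_N\bigr),\bigl(\rspanUN,\Uvec_N,\nu^2_N\bigr)\Bigr)
	\leq d_{\rm Pr}\bigl(\nu^1_N{\restricted{\rspanUN}},\nu^2_N{\restricted{\rspanUN}}\bigr),
\end{equation}
which, by the standing hypothesis \eqref{lem:changenu:equiv} together with the fact that $\Uvec_N$ has distribution $(\mu_N^\circ)^{\otimes n}$, tends to $0$ almost surely (after an a.s.\ extraction along a suitable subsequence if needed).

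Combining the two displays via the triangle inequality for $d_{\rm pGP}$ and using Proposition~\ref{prop:pGwcharacterizations}\ref{it:separate}--\ref{it:sGw} (which states that $d_{\rm pGP}$ metrizes the \pGwtopo), I conclude that
\begin{equation}
	\bigl(\rspanUN,\Uvec_N,\nu^2_N\bigr)
	\,\underset{N\to\infty}{\stackrel{\rm pGw}{\longrightarrow}}\,
	\bigl(\rspanU,\Uvec,\nu\bigr),\qquad\text{a.s.,}
\end{equation}
which yields weak convergence of the laws and hence \eqref{eq:equiv:pGw} for $\smallx^2_N$. Applying Proposition~\ref{prop:equivconvergence} then gives $\smallx^2_N\toLWV\smallx$.

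The main subtlety I anticipate is the passage from the a.s.\ hypothesis \eqref{lem:changenu:equiv} (which is formulated for every $n$ with respect to $\mu_N^{\otimes n}$) to a statement about the specific coupled sample $\Uvec_N$ produced by Skorohod's theorem: this is handled by passing to a further subsequence along which the a.s.\ convergence holds for the coupled sample, and by noting that the desired weak convergence need only be checked along arbitrary subsequences.
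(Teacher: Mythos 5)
Your proposal is correct and follows essentially the same route as the paper: bound $d_{\rm pGP}$ between the two sampled subtrees by the Prohorov distance of $\nu^1_N$ and $\nu^2_N$ restricted to the common subtree $\rspan{\Uvec_N}$, combine with the (Skorohod-coupled) a.s.\ convergence coming from $\smallx^1_N\toLWV\smallx$ via the triangle inequality, and conclude with Proposition~\ref{prop:equivconvergence}. Your subsequence treatment of the coupling subtlety is in fact a slightly more careful version of what the paper leaves implicit (only the cross-reference should be to Proposition~\ref{prop:pGwcharacterizations} for the fact that $d_{\rm pGP}$ metrizes the \pGwtopo).
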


\begin{proof}
Let $\underline{U}_N$ and $\underline{U}$ be sequences of independent $\mu_N$- and $\mu$-distributed random variables
in $T_N$ and $T$, respectively. Because $\nu^1$ and $\nu^2$ are defined on the same measure \Rtree, the Prohorov distance in
(\ref{lem:changenu:equiv}) is an upper bound for the {\rm pGP}-distance, and we obtain
\begin{equation}
\begin{aligned}
   &d_{\rm pGP}\left( \tauxn[\smallx_N^2](\underline{U}_N^n), \tauxn(\underline{U}^n)\right)
    \\
   &\leq
    d_{\rm Pr}\left({\nu^1_N}\restricted{\rspan{\underline{U}_N^n}},{\nu^2_N}\restricted{\rspan{\underline{U}_N^n}}\right) + d_{\rm pGP}\left( \tauxn[\smallx_N^1](\underline{U}_N^n), \tauxn(\underline{U}^n)\right) \tNo 0,
\end{aligned}
\end{equation}
almost surely, for all $n\in \N$. This implies
$(\tauxn[\smallx_N^2])_\ast\left(\mu^\circ_N\right)^{\otimes n}\stackrel{\rm pGw}{\Longrightarrow}
\left(\tauxn\right)_\ast\left(\mu^\circ\right)^{\otimes n} $ for all $n\in \N$, and
Proposition~\ref{prop:equivconvergence} gives the {\rm LWV}-convergence.
\end{proof}

We conclude this section by giving a simple, sufficient (but far from necessary) condition for relative
compactness of a set $\K \subseteq\Hbi$. Assume that for all $\smallx'=(T', \mu', \nu')\in \K$,
there is an isometric embedding of $T'$ into some common \Rtree\ $T$, and there are measures $\mu$ and $\nu$ on $T$
dominating all the (push forwards of) $\mu'$ and $\nu'$, respectively. Further assume that $\smallx:=(T, \mu, \nu) \in
\Hbi$. In other words,
\begin{equation}\label{eq:Sx}
	\K\subseteq \Sx:=\bset{(T, \mu', \nu')\in\Hbi}{\mu'\le\mu,\;\nu'\le\nu}.
\end{equation}
Then $\K$ is relatively compact, as the following lemma shows.

\begin{lemma}[Compactness of $\Sx$]
	Let\/ $\smallx=(T, \mu, \nu)\in\Hbi$.  Then\/ $\Sx$, defined in \eqref{eq:Sx}, is compact in the\/ \LWVtopo.
\end{lemma}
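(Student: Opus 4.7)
Since $\Hbi$ is metrizable (Corollary~\ref{cor:metrizable}), the plan is to prove sequential compactness. Let $(\smallx_N)_{N\in\N}$ be a sequence in $\Sx$. The approach is to identify each $\smallx_N$ with a pair of bounded measurable densities and extract a subsequence via weak-$*$ compactness.

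First, we write $\mu'_N = f_N\cdot\mu$ and $\nu'_N = g_N\cdot\nu\restricted{\muskel}$ via Radon-Nikodym (legitimate since $\nu\restricted{\muskel}$ is $\sigma$-finite), with $0\le f_N,g_N\le 1$ measurable. Weak-$*$ sequential compactness of the unit balls in $L^\infty(\mu)$ and $L^\infty(\nu\restricted{\muskel})$ (both preduals being separable) yields, by successive extraction, a subsequence along which $f_N\to f$ in $L^\infty(\mu)$ and $g_N\to g$ in $L^\infty(\nu\restricted{\muskel})$ weakly-$*$. The candidate limit is $\smallx' := (T, f\mu, g\nu\restricted{\measureskel{T}{f\mu}})$; it lies in $\Sx$ because $0\le f,g\le 1$ gives the dominations, condition~(i) of a bi-measure tree is inherited from $\smallx$ (as $f\mu$-null sets contain all $\mu$-null sets), and condition~(ii) holds by construction.

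By Lemma~\ref{lem:topinduce}, it remains to verify $\Psi(\smallx_N)\to\Psi(\smallx')$ for every $\Psi\in\F$. Writing $\Psi=\Psi^{1,n,\Phi}$ with $\Phi = \gamma_0(\|\cdot\|)\,\Phi^{m,\vphi}\in\Pin$, this reduces to passing to the limit in an iterated integral
\begin{equation*}
	\int f_N^{\otimes n}(\uvec)\,\gamma_0\!\(\nu'_N(\rspanu)\)\!\int g_N^{\otimes m}(\vvec)\,\mathds{1}_{\vvec\in\rspanu^m}\,\vphi\(R^T(\uvec,\vvec)\)\,\nu^{\otimes m}(\mathrm d\vvec)\,\mu^{\otimes n}(\mathrm d\uvec).
\end{equation*}
The technical core is an auxiliary lemma, proved by induction on $k$ via Fubini and dominated convergence: weak-$*$ convergence $h_N\to h$ in $L^\infty(\lambda)$ with $\lambda$ $\sigma$-finite implies $h_N^{\otimes k}\to h^{\otimes k}$ weak-$*$ in $L^\infty(\lambda^{\otimes k})$. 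Applied to $g_N$ on $\nu\restricted{\rspanu}$ (finite for $\mu^{\otimes n}$-a.e.~$\uvec$ by condition~(i) of a bi-measure tree), it yields pointwise convergence in $\uvec$ of the inner integral. Since $x^m\gamma_0(x)$ is bounded, the whole inner factor is uniformly bounded in $\uvec$ and $N$; dominated convergence combined with weak-$*$ convergence of $f_N^{\otimes n}\to f^{\otimes n}$ against the bounded (hence $\mu^{\otimes n}$-integrable) limit then completes the argument.

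The main obstacle will be the non-local finiteness of $\nu$: the full integrand is not integrable against $\mu^{\otimes n}\otimes\nu^{\otimes m}$ in general, so one cannot apply weak-$*$ convergence of the combined density $f_N^{\otimes n}g_N^{\otimes m}$ on the product space directly. The $\gamma_0$-prefactor built into $\Pin$ provides the uniform bound that legalises the iterated limit; this is precisely why working with the class $\F$ rather than with bare polynomials is essential.
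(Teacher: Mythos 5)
Your argument takes a genuinely different route from the paper's, and its core is sound. The paper never introduces densities: it extracts a subsequence along which the $\mu_N$ converge \emph{setwise} (uniform $\sigma$-additivity plus norm-boundedness), handles the pruning measures by a Cantor diagonalization over an exhausting sequence of finite subtrees of $\muskel$, builds the limit pruning measure by hand on $\measureskel{T}{\mu_\infty}$, and then verifies convergence of the test functions in $\F$. You instead encode the dominations by Radon--Nikodym densities and use weak-$*$ sequential compactness of the unit balls of $L^\infty(\mu)$ and $L^\infty(\nu\restricted{\muskel})$ (legitimate: $L^1$ of a finite, resp.\ $\sigma$-finite, Borel measure on a separable metric space is separable), together with a tensorization lemma for weak-$*$ convergence, which is indeed provable by Fubini and dominated convergence using $\|g_N\|_\infty\le1$. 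Your uniform bound is also correct for the right reason: since $g_N^{\otimes m}\,\mathrm d\nu^{\otimes m}=(\nu'_N)^{\otimes m}$, the inner integral is at most $\|\vphi\|_\infty\,\nu'_N(\rspanu)^m$ and pairs with $\gamma_0\(\nu'_N(\rspanu)\)$, so boundedness of $x^m\gamma_0(x)$ applies. What the paper's route buys is that it stays entirely measure-theoretically elementary (no $L^\infty$ duality) and produces the limit measure directly on the correct skeleton; what yours buys is a more transparent description of the limit point and of why $\Sx$ is closed under the limit operation.

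One step is missing, and it is exactly the step the paper spends its last paragraph on. Your limiting computation shows $\Psi(\smallx_N)\to\int f^{\otimes n}(\uvec)\,\gamma_0\bigl(g\nu(\rspanu)\bigr)\int_{\rspanu^m}g^{\otimes m}\vphi\,\mathrm d\nu^{\otimes m}\,\mu^{\otimes n}(\mathrm d\uvec)$, i.e.\ $\Psi$ evaluated with the pruning measure $g\cdot\nu\restricted{\muskel}$. Your declared limit, however, is $(T,f\mu,\,g\nu\restricted{\measureskel{T}{f\mu}})$ -- and it has to be, since $g\nu\restricted{\muskel}$ may charge $f\mu$-leaves, violating condition (ii). So you still must show that this restriction changes no $\Psi$-value, i.e.\ that for $(f\mu)^{\otimes n}$-a.e.\ $\uvec$ the sampled subtree satisfies $g\nu\bigl(\rspanu\cap\muskel\setminus\measureskel{T}{f\mu}\bigr)=0$. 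This is true, but needs an argument: for $\uvec\in\supp(f\mu)^n$ one has $\rspanu\setminus\measureskel{T}{f\mu}\subseteq\{u_1,\ldots,u_n\}$, every atom of $g\nu$ lying outside $\measureskel{T}{f\mu}$ has $f\mu$-mass zero, and the atom set is countable -- this is precisely the paper's treatment of its bad set $B$ via countability of $\At(\nu)$. Add this paragraph and your proof is complete.
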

\begin{proof}
	Consider measures $\mu_N\le \mu$, $\nu_N\le\nu$, $N\in\N$.
	We have to find a subsequence of $\smallx_N:=(T, \mu_N, \nu_N)$ that converges in $\Sx$.
	Fix finite subtrees $T_n\subseteq T$, $n\in\N$, with $T_n\subseteq T_{n+1}$ and
	$\bigcup_{n\in\N} T_n \supseteq \muskel$.

	Because the family $(\mu_N)_{N\in\N}$ is uniformly $\sigma$\nbd additive and norm bounded, there exists a setwise
	convergent subsequence (\cite[Thm.~4.7.25]{BogachevI2007}).
	Assume w.l.o.g.\ that there is $\mu_\infty\in \CM_f(T)$ with $\mu_N(A) \tNo \mu_\infty(A)$ for
	all measurable $A\subseteq T$. Similarly, using Cantor's diagonalization argument, we may assume that
	$\nu_N\restricted{T_n}$ converges setwise to some $\nuh_n\in\CM_f(T_n)$, for every $n\in\N$. Define
	\begin{equation}
   \label{e:035}
		\nu_\infty(A) := \sup_{n\in\N} \nuh_n\(T_n\cap A\cap \measureskel T{\mu_\infty}\).
	\end{equation}
	
Because $\nuh_n\restricted{T_{n-1}} = \nuh_{n-1}$, we can easily check that $\nu_\infty$ is a measure on $T$
	and $\smallx_\infty:=(T, \mu_\infty, \nu_\infty)\in\Sx$.
	Furthermore, for measurable
	$A\subseteq \measureskel{T}{\mu_\infty}\subseteq\muskel\subseteq \bigcup_{n\in\N} T_n$, we obtain
	\begin{equation}
		\nu_\infty(A) = \sup_{n\in\N} \lim_{N\to\infty} \nu_N(A\cap T_n)
		\left\{\begin{array}{l}
				\D \le \liminf_{N\to\infty} \nu_N(A) \\
				\D \ge \smash{\limsup_{N\to\infty} \nu_N(A) - \sup_{n\in\N} \nu(A\setminus T_n)}
		\end{array}\right.
	\end{equation}
	Using $A\subseteq\bigcup_{n\in\N} T_n$, this implies
	\begin{equation}    \label{eq:strongconv}
		\nu_\infty(A) = \lim_{N\to\infty}\nu_N(A).
	\end{equation}
	We shall show that $\smallx_N \toLWVN \smallx_\infty$. By Lemma~\ref{lem:topinduce}, it is enough to show that
	$\Psi(\smallx_N)\to\Psi(\smallx_\infty)$ for all $\Psi\in\F$.  Let
	\begin{equation}
		G
 :=
    \big\{\uvec\in T^n:\,\nu\(\rspanu\setminus\measureskel{T}{\mu_\infty}\) = 0\big\}.
	\end{equation}
	Fix $\Psi=\Psi^{n,\Phi}\in{\mathcal F}$. Then \eqref{eq:strongconv} implies
	\begin{equation}\label{eq:ptwiseconv}
		\Phi\circ \tauxNn(\uvec) \toN \Phi\circ\tauxn[\smallx_\infty](\uvec)
			\quad\forall \uvec\in G,
	\end{equation}
	and with $B:=T^n\setminus G$ we estimate
	\begin{equation}
	\begin{aligned}
		\bigl|\Psi(\smallx_N) - \Psi(\smallx_\infty)\bigr| \le&\;  \mun[N](B)2\|\Phi\|_\infty
			+ \plainintsetmu[N]{G}{\!|\Phi\circ\tauxNn - \Phi\circ\tauxNn[\infty]|}\\
		& + \plainint{|\Phi\circ\tauxNn[\infty]|}{(\mun[N] - \mun[\infty])}.
	\end{aligned}
	\end{equation}
	The last term converges to zero because of the setwise convergence of $\mu_N$ to $\mu_\infty$, and the
	second term is bounded by
	$\plainintsetmu{G}{|\Phi\circ\tauxNn - \Phi\circ\tauxNn[\infty]|}$,
	which converges to zero according to \eqref{eq:ptwiseconv}, using the dominated convergence theorem.

	For every $(u_1,\ldots,u_n) \in \supp(\mu_\infty)^n \setminus G$, there is an index
	$k\in\{1,\ldots,n\}$ with $u_k\in \At(\nu)\setminus \At(\mu_\infty)$, where $\At$ denotes the set of
	atoms of a measure. Because $\At(\nu)$ is countable, this implies that $B$ is a
	$\mu_\infty$-null set. Again using setwise convergence of $\mu_N$, we obtain
		\[ \lim_{N\to\infty} \mun[N](B) = \mun[\infty](B) = 0. \qedhere \]
\end{proof}\sm

\section{The Pruning Process}
\label{S:pruning}
In this section, we present the construction of the bi-measure valued pruning process, $(X_t)_{t\ge 0}$.
In Subsection~\ref{sec:pruningprocess}, we carry out an explicit construction given a realization of the Poisson
point process which gives rise to a c\`adl\`ag path. We continue the construction in
Subsection~\ref{sec:strongMarkov} by adding randomness and establishing that the stochastic process obtained
this way has the strong Markov property.
In Subsection~\ref{sec:continuity}, we establish the Feller property from which we can conclude that the law of
the pruning process on Skorohod space is weakly continuous in the initial distribution on bi-measure \Rtree s.
Finally, in Subsection~\ref{sec:generator} we give an analytic characterization via the infinitesimal generator.

\subsection{Getting the construction started: pruning moves}
\label{sec:pruningprocess}
It is convenient to introduce randomness later and work initially in a setting where the {\em cut times} and {\em cut
points} are fixed. Given a bi-measure $\R$-tree, $(T,\mu, \nu)\in\Hbi$, consider a subset
$\pi\subseteq \mathbb{R}_+\times T$.
Although $\pi$ is associated with a particular class representative, it corresponds, of course, to a similar set
for any representative of the same equivalence class by mapping across using the appropriate root invariant
isometry. Then the set of cut points up to time $t$ is the projection of $\pi\cap\([0,t]\times T\)$ onto the
tree, i.e.
\begin{equation}
	\pi_t := \bset{v\in T}{\exists s\le t: (s,v)\in\pi}.
\end{equation}
For every $v\in T$, the \emph{tree pruned at $v$} is defined by
\begin{equation} \label{e:prun}
	T^v := \bset{w\in T}{v\notin [\rho , w ]}.
\end{equation}
The pruned tree at the set $\pi_t\subseteq T$, $T^{\pi_t}$, is the intersection of the trees $T^v$ pruned at
$v\in\pi_t$, i.e.,
\begin{equation} \label{e:Tpi}
	T^{\pi_t} := \bigcap_{v\in \pi_t} T^v.
\end{equation}
We equip the pruned tree $T^{\pi_t}$ with the restrictions of the measures $\mu$ and $\nu$. As always, we write
$(T^{\pi_t}, \mu, \nu)$ instead of $(T^{\pi_t}, \mu\restricted{T^{\pi_t}}, \nu\restricted{T^{\pi_t}})$ and
easily verify $(T^{\pi_t}, \mu, \nu)\in\Hbi$.

\begin{lemma}[C\`adl\`ag paths]
	Fix\/ $\smallx=(T,\mu,\nu)\in\Hbi$ and a set\/ $\pi\subseteq\R_+\times T$.
	The map\/ $t\mapsto \smallx_t := (T^{\pi_t}, \mu, \nu)$ is c\`adl\`ag with respect to the\/ \LWVtopo.
\label{lem:cadlag}
\end{lemma}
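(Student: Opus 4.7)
The plan is to invoke Lemma~\ref{lem:topinduce}, which asserts that the class $\F$ from \eqref{e:F} induces the LWV-topology on $\Hbi$. Consequently, it suffices to verify that for every $\Psi = \Psi^{1,n,\Phi}\in\F$, the real-valued map $t\mapsto \Psi(\smallx_t)$ is right-continuous at $t$ and admits a left-limit at $t$, with the left-limit identified as $\Psi(\smallx_{t-})$ for some $\smallx_{t-}\in\Hbi$ that does not depend on the choice of $\Psi$.

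The argument hinges on the monotonicity of $s\mapsto \pi_s$ (increasing, by construction), hence of $s\mapsto T^{\pi_s}$ (decreasing). Using this, I would rewrite
\[
	\Psi(\smallx_s) = \int_{T^n} \mu^{\otimes n}(\mathrm d\uvec)\;\mathds{1}_{\{\uvec\in (T^{\pi_s})^n\}}\;\Phi\bigl(\rspanu,\uvec,\nu\bigr),
\]
and apply dominated convergence with the finite dominating measure $\|\Phi\|_\infty\,\mu^{\otimes n}$. As $s\uparrow t$, one has $\pi_s\uparrow \pi_{t-}:=\bigcup_{s<t}\pi_s$, so $T^{\pi_s}\downarrow T^{\pi_{t-}}$ and the indicators decrease pointwise; this yields $\Psi(\smallx_s)\to\Psi(\smallx_{t-})$, where $\smallx_{t-}:=(T^{\pi_{t-}},\mu,\nu)\in\Hbi$ is the common left-limit. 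Analogously, as $s\downarrow t$ the indicators increase pointwise to $\mathds{1}_{\{\uvec\in (T^{\pi_{t+}})^n\}}$, where $T^{\pi_{t+}}:=\bigcup_{s>t}T^{\pi_s}$, and dominated convergence gives $\Psi(\smallx_s)\to\Psi\bigl((T^{\pi_{t+}},\mu,\nu)\bigr)$.

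The main obstacle is then to identify $(T^{\pi_{t+}},\mu,\nu)$ with $\smallx_t$ in the LWV-equivalence sense. The set $T^{\pi_t}\setminus T^{\pi_{t+}}$ consists of those points $w\in T^{\pi_t}$ for which the cut times of $\pi$ on $[\rho,w]$ accumulate down to $t$ from above, and one needs both its $\mu$-measure and the $\nu$-mass it carries on the $\mu$-skeleton to vanish. This is a no-accumulation regularity condition on $\pi$ which is automatic in the setting the lemma is applied in: for Poisson realizations $\pi$ of intensity $\mathrm ds\otimes\nu(\mathrm dv)$, the cut times along any finite subtree form an a.s.\ locally finite subset of $\R_+$ (since $\nu$ is finite on finite subtrees), ruling out such accumulation. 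Granted this, the right-continuity follows, and the LWV-jumps of the trajectory occur precisely at the times $t$ with $\mu\bigl(T^{\pi_{t-}}\setminus T^{\pi_t}\bigr)>0$, that is, exactly when an atom of $\pi$ cuts off a $\mu$-nontrivial subtree.
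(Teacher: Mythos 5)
Your argument follows the same route as the paper's: reduce everything to the test functions in $\F$ via Lemma~\ref{lem:topinduce}, use that $t\mapsto T^{\pi_t}$ is decreasing, and pass to the limit in the $\mu^{\otimes n}$-integrals of the (bounded) integrands; your left limit $(T^{\pi_{t-}},\mu,\nu)$ is exactly the paper's $\smally_t=(T_{t-},\mu,\nu)$, since $T^{\pi_{t-}}=\bigcap_{s<t}T^{\pi_s}$, and that half of the argument is complete and coincides with the paper's.

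The divergence is in the right-continuity step, and there your caution is justified -- indeed you are more careful than the paper. The paper asserts that $\bigcap_{t>s}\bigl(T^{\pi_s}\setminus T^{\pi_t}\bigr)=\emptyset$ for the given arbitrary set $\pi$ and deduces $\mu(T^{\pi_s}\setminus T^{\pi_t})\to 0$ unconditionally; as you observe, this fails precisely when cut times accumulate at $s$ from the right along some $[\rho,w]$. For instance, with $T=[0,1]$ rooted at $0$ and $\pi=\{(s+\tfrac1k,\tfrac12):k\in\N\}$ one has $T^{\pi_s}=T$ but $T^{\pi_t}=[0,\tfrac12[$ for every $t>s$, so the path is not right-continuous at $s$ whenever $\mu([\tfrac12,1])>0$; the lemma as literally stated thus needs either a mild no-right-accumulation hypothesis on $\pi$ or the almost-sure qualification you supply. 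Your resolution is the one that matters for the rest of the paper: for Poisson realizations, $\nu([\rho,u])<\infty$ for $\mu$-a.e.\ $u$, so the cut times on $[\rho,u]$ are a.s.\ locally finite, and Fubini makes the exceptional set of $u$'s $\mu$-null simultaneously for all $t$, which is all that is needed since only $\mu\bigl(T^{\pi_t}\setminus\bigcup_{s>t}T^{\pi_s}\bigr)$ enters the estimate. Two minor points: local finiteness holds along the paths $[\rho,u]$ for $\mu$-a.e.\ $u$, not along ``any finite subtree'' ($\nu$ is only assumed finite on $\mu$-sampled subtrees), which however suffices for the same reason; and your extra requirement that the $\nu$-mass on the $\mu$-skeleton of $T^{\pi_t}\setminus T^{\pi_{t+}}$ vanish is automatic once its $\mu$-mass vanishes, because pruned trees are closed under passing to ancestors, so the $\mu$-skeleton of the limiting tree is contained in $T^{\pi_{t+}}$ and the two restrictions of $\nu$ agree there.
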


\begin{proof}
Let $0<s<t$. As $T^{\pi_t} \subseteq T^{\pi_s}$, we obtain for all $\Psi=\Psipol[1]\in\F$,
\begin{equation}
\label{e:018}
\begin{aligned}
   \big|\Psi(\smallx_s)-\Psi(\smallx_t)\big|
 &=
   \Big|\int_{(T^{\pi_s})^n\setminus(T^{\pi_t})^n} \Phi\circ\tauxn\;\mathrm{d}\mu^{\otimes n}\Big|
   \\
 &\leq
   \|\Phi\|_\infty\cdot\mu^{\otimes n}\bigl((T^{\pi_s})^n\setminus(T^{\pi_t})^n\bigr)
   \\
 &\leq
   \|\Phi\|_\infty\cdot n\cdot\|\mu\|^{n-1}\cdot\mu(T^{\pi_s}\setminus T^{\pi_t}).
\end{aligned}
\end{equation}
For fixed $s$, $\bigcap_{t>s} T^{\pi_s}\setminus T^{\pi_t}=\emptyset$, which implies that $\mu(T^{\pi_s}\setminus
T^{\pi_t})\to 0$, as $t\to s$ from the right. Because $\F$ induces the LWV-topology, this implies \emph{right continuity}. \sm

To construct the \emph{left limit}, define  $T_{t-}:=\cap_{0\le s<t} T^{\pi_s}\supseteq T^{\pi_t}$ for each $t> 0$,
and define $\smally_t:=(T_{t-}, \mu,\nu)$, which is obviously an element of $\Hbi$. Similarly as before, for
all $0<s<t$ and $\Psi\in{\mathcal F}$, there exists a constant $C=C^{\Psi}$ such that
\begin{equation}
\label{e:019}
   \big|\Psi(\smallx_s)-\Psi(\smally_t)\big| \leq C\cdot\mu\bigl(T^{\pi_s}\setminus T_{t-}\bigr).
\end{equation}
As, for fixed $t$, $\bigcap_{s<t}T^{\pi_s}\setminus T_{t-}=\emptyset$, $\smally_t$ is indeed the left limit.
\end{proof}\sm


\subsection{Continuing the construction: adding randomness}
\label{sec:strongMarkov}
In this subsection we define, given a bi-measure $\R$-tree $\smallx=(T,\mu,\nu)$, the pruning process of
$\smallx$, where $\pi$ is now the (random) Poisson point measure with intensity $\lambda\otimes\nu$ on
$\R_+\times T$. Here, we
identify an atomic measure $\mathfrak{m}$ on $T$ with the set $\At(\mathfrak{m})$ of its atoms and define
\begin{equation}
	T^\mathfrak{m} := T^{\At(\mathfrak m)} = \bigcap_{v\in \At(\mathfrak m)} T^v.
\end{equation}

\begin{definition}[The pruning process]
Fix a bi-measure \Rtree\, $\smallx:=(T,\mu,\nu)\in \Hbi$. Let $\pi^\smallx$ be the Poisson point measure on
$\R_+\times T$ with intensity measure $\lambda\otimes \nu$, where $\lambda$ is the Lebesgue measure on  $\R_+$.
We define \emph{the pruning process}, $X:=(X_t)_{t\geq 0}$, as the bi-measure \Rtree-valued process obtained by
pruning $X_0:=\smallx$ at the points of the Poisson point process
$\pi_t(\boldsymbol{\cdot}):=\pi^\smallx_t(\boldsymbol{\cdot}):=\pi^\smallx\([0,t]\times\boldsymbol{\cdot}\)$, i.e.,
\begin{equation}
   X_t := \bigl(T^{\pi_t},\mu,\nu\bigr) :=
   \bigl(T^{\pi_t},\mu\restricted{T^{\pi_t}},\nu\restricted{T^{\pi_t}}\bigr).
\end{equation}
$\mathbb E^\smallx$, or $\mathbb E$ if there is no confusion, denotes the distribution of the process $X$ starting from $X_0=\smallx$.
\end{definition}\sm

\begin{lemma}[Strong Markov property]
The pruning process\/ $X$ is a strong Markov process.
\label{Lem:002}
\end{lemma}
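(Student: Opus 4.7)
The plan is to derive the strong Markov property directly from the strong Markov property of the driving Poisson point process, together with the restriction theorem for Poisson measures. Let $(\F_t)_{t\ge 0}$ denote an appropriate filtration, e.g.\ the natural filtration of the underlying PPP $\pi^\smallx$ (which can be shown to enlarge the natural filtration of $X$ in the obvious way, since the jumps of $X$ encode exactly the ``effective'' atoms of $\pi^\smallx$). Let $\tau$ be an $(\F_t)$-stopping time. It suffices to show that, conditionally on $\F_\tau$, the process $(X_{\tau+s})_{s\ge 0}$ has the same law as the pruning process started from $X_\tau$.

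The key observation is that only those atoms of $\pi^\smallx$ whose spatial component lies in the currently surviving tree $T^{\pi_\tau}$ matter for the evolution after time $\tau$. Indeed, if $v\notin T^{\pi_\tau}$, then by definition of the pruned tree there exists $u\in\pi_\tau$ with $u\in[\rho,v]$, and for any $w\in T^{\pi_\tau}$ the relation $v\in[\rho,w]$ would force $u\in[\rho,w]$, contradicting $w\in T^{\pi_\tau}$. Hence $T^{\pi_\tau}\subseteq T^v$, so a cut at such a $v$ leaves $T^{\pi_\tau}$ unchanged. Consequently, if we set
\begin{equation}
   \pi^{(\tau)}
 :=
   \bigl\{(s,v): (\tau+s,v)\in\pi^\smallx,\; s>0,\; v\in T^{\pi_\tau}\bigr\},
\end{equation}
then for every $s\ge 0$,
\begin{equation}
   X_{\tau+s} = \bigl((T^{\pi_\tau})^{\pi^{(\tau)}_s},\mu,\nu\bigr).
\end{equation}

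By the strong Markov property of the Poisson point process $\pi^\smallx$ applied at $\tau$, the shifted restriction $\pi^\smallx\restricted{(\tau,\infty)\times T}$, re-centered in time to $\R_+\times T$, is conditionally on $\F_\tau$ a Poisson point process on $\R_+\times T$ with intensity $\lambda\otimes\nu$, independent of $\F_\tau$. Since $T^{\pi_\tau}$ (and therefore $\nu\restricted{T^{\pi_\tau}}$) is $\F_\tau$-measurable, the restriction theorem for Poisson measures implies that $\pi^{(\tau)}$ is, conditionally on $\F_\tau$, a Poisson point process on $\R_+\times T^{\pi_\tau}$ with intensity $\lambda\otimes\nu\restricted{T^{\pi_\tau}}$. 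This is precisely the driving intensity required to construct the pruning process started from $X_\tau=(T^{\pi_\tau},\mu,\nu)$, so $(X_{\tau+s})_{s\ge 0}$ has, conditionally on $\F_\tau$, the law of such a process, which gives the strong Markov property.

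The main technical obstacle is the interplay between the filtrations: one must verify that the natural filtration of $X$ is sufficient, i.e.\ that stopping times of $X$ can be treated as stopping times of $\pi^\smallx$ for the purpose of applying its strong Markov property. This is handled by noting that $X$ is c\`adl\`ag (Lemma~\ref{lem:cadlag}) and by using the ``ineffectiveness'' argument above, which shows that the $\F_\tau^{\pi^\smallx}$-information beyond $\F_\tau^X$ concerns only cuts on the already-removed part of the tree, is independent of the future effective PPP $\pi^{(\tau)}$, and hence plays no role. Measurability of $X_\tau$ and of the transition kernel $\smallx \mapsto \law(\text{pruning process started at }\smallx)$ follow from the explicit Poissonian construction and the measurability properties of the \LWVtopo\ established in Section~\ref{S:topology}.
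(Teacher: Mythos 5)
Your proof is correct, but it takes a genuinely different route from the paper's. The paper works on the level of one-dimensional marginals tested against the class $\Ft^1$: writing $\tilde\Psi(X_{\sigma+t})=\int\mu^{\otimes n}(\mathrm{d}\uvec)\,\mathds 1_{\{\pi_{\sigma+t}(\rspan{\uvec})=0\}}\Phit(\tauxn(\uvec))$, it only needs the avoidance probabilities $\mathbb P\(\pi_{\sigma+t}(\rspan{\uvec})=0\mid\A_\sigma\)=\mathds 1_{\{\pi_\sigma(\rspan{\uvec})=0\}}e^{-t\nu(\rspan{\uvec})}$ of finite spanned subtrees under the driving Poisson measure, and then concludes because $\Ft^1$ is a separating class on $\H^{\|\mu\|,\sigma}$ (Proposition~\ref{prop:convdet}); as a by-product it produces the explicit semigroup formula that is reused in the Feller and generator proofs. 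You instead identify the conditional law of the \emph{entire} post-$\tau$ path: your ``ineffectiveness'' observation ($v\notin T^{\pi_\tau}\Rightarrow T^{\pi_\tau}\subseteq T^v$, which holds because pruned trees are ancestor-closed) is sound and corresponds exactly to the indicator restricting the paper's integral to $(T^{\pi_\sigma})^n$, and the combination of the renewal property of the Poisson measure at $\F_\tau$-stopping times with the restriction theorem (applied conditionally, the $\F_\tau$-measurable random set $T^{\pi_\tau}$ being fixed under the conditioning) legitimately shows that $(X_{\tau+s})_{s\ge0}$ is, given $\F_\tau$, a pruning process started from $X_\tau$ driven by a fresh Poisson measure with intensity $\lambda\otimes\nu\restricted{T^{\pi_\tau}}$. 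This buys a stronger statement in one stroke (the conditional law of the whole future path, not just marginals) and avoids any appeal to separating classes, at the price of invoking the conditional Poisson/restriction machinery and of having to check, as you briefly acknowledge, that the law of the pruning process depends only on the equivalence class of $(T^{\pi_\tau},\mu,\nu)$ and measurably so -- a point the paper's test-function computation handles implicitly, since its formula for $\E^{X_\sigma}[\tilde\Psi(X_t)]$ is manifestly a function of the equivalence class. The filtration issue you worry about is simpler than you make it: since $X$ is adapted to the Poisson filtration $(\A_t)$, every stopping time of the natural filtration of $X$ is an $(\A_t)$-stopping time, so proving the strong Markov property relative to $(\A_t)$, as both you and the paper do, already suffices.
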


\begin{proof}
Denote by $(\A_t)_{t\geq 0}$ the filtration generated by the Poisson point process $(\pi_t)_{t\geq0}$.
Note that $X$ is adapted to this filtration.
Using the strong Markov property of the Poisson process, we get for every $t\ge0$, stopping time $\sigma$, and
$\uvec\in T^n$, $n\in\N$,
\begin{equation}
	\mathbb P\(\pi_{\sigma+t}(\rspan{\uvec})=0 \bigm| \A_\sigma\)
		= \mathds 1_{\{\pi_\sigma(\rspan{\uvec})=0\}}\mathbb P\(\pi_t(\rspan{\uvec})=0\).
\end{equation}
For every $\tilde\Psi=\Psitpol[1]\in\Ft^1$, this implies
\begin{equation}\begin{aligned}
	\E\left[ \tilde\Psi(X_{\sigma+t}) \Bigm| \A_\sigma\right]
		&= \int_{T^n} \mu^{\otimes n}(\mathrm{d}\uvec)\,
			\mathbb P\(\pi_{\sigma+t}(\rspan{\uvec})=0 \bigm| \A_\sigma\)\cdot \Phit(\tauxn(\uvec))\\
		&= \int_{(T^{\pi_\sigma})^n}\mu^{\otimes n}(\mathrm{d}\uvec)\, e^{-t\nu(\rspanu)} \cdot
			\Phit\(\tauxn(\uvec)\).
\end{aligned}\end{equation}
On the other hand, we also have
\begin{equation}
	\E^{X_\sigma}\left[ \tilde\Psi(X_t)\right] = \int_{(T^{\pi_\sigma})^n}\mu^{\otimes n}(\mathrm{d}\uvec)\,
		e^{-t\nu(\rspanu)}\cdot \tilde\Phi\(\tauxn(\uvec)\).
\end{equation}
Because $X_t\in\H^{\|\mu\|,\sigma}$, for all $t\ge 0$, and $\tilde{\mathcal F}^1$ is a separating class on this space, we
obtain the strong Markov property.
\end{proof}\sm

\subsection{Continuity of the pruning process}
\label{sec:continuity}
In this subsection we show that the law of $X_t$ under $\mathbb{P}^\smallx$ is weakly continuous in the
initial value $\smallx$ for each $t\ge 0$. This property is sometimes referred to as the \emph{Feller
property} of the corresponding semigroup $(S_t)_{t\ge 0}$, although this terminology is often restricted to
the case of a locally compact state space and transition operators that map the space
of continuous functions that vanish at infinity into itself. In the latter, more restrictive case, the Feller
property implies that the law of the whole process (as random variable on Skorohod space) depends continuously
on the initial value. If $S_t$ maps only $\Cb$ into itself, this is no longer the case in general, and one needs
an extra argument. The pruning process $(X_t)_{t\ge 0}$, however, does depend continuously on the initial
condition (Theorem~\ref{theo:main}).

Let $(S_t)_{t\ge0}$ be the semi-group associated to the pruning process $(X_t)_{t\ge 0}$, i.e.\
for $t\ge 0$ and a bounded measurable function $G:\mathbb{H}^{f,\sigma}\to\mathbb{R}$,
\begin{equation}
\label{e:semigroup}
     S_tG(\smallx):=\mathbb{E}^\smallx\bigl[G(X_t)\bigr].
\end{equation}

\begin{proposition}[Feller continuity]
The process $X:=(X_t)_{t\geq0}$ is Feller continuous, i.e., $S_t\(\Cb(\Hbi)\) \subseteq \Cb(\Hbi)$.
\label{prop:feller}
\end{proposition}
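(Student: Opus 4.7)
The plan is to reduce the Feller property to convergence on a convergence-determining class, where the semigroup admits an explicit Poisson formula. Fix a sequence $\smallx_N = (T_N,\mu_N,\nu_N) \toLWV \smallx = (T,\mu,\nu)$; since $\|\mu_N\|\to\|\mu\|$, the laws of the random bi-measure trees $X_t^{\smallx_N}$ are eventually concentrated on a common $\HK$ for $K = \|\mu\|+1$. By Proposition~\ref{prop:convdet}(ii), $\tilde{\mathcal F}^1$ is convergence determining on this space, so the Feller property follows once we show $S_t \tilde\Psi(\smallx_N) \to S_t\tilde\Psi(\smallx)$ for every $\tilde\Psi = \Psi^{1,n,\tilde\Phi} \in \tilde{\mathcal F}^1$.

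The next step is to derive an explicit Poisson formula for $S_t\tilde\Psi$. Repeating the conditioning on the Poisson cut measure used in Lemma~\ref{Lem:002} (with $\sigma = 0$), and using that $\mathbb P^\smallx\bigl(\pi_t(\rspanu)=0\bigr) = e^{-t\nu(\rspanu)}$ is well-defined because $\nu(\rspanu)$ is $\mu^{\otimes n}$-a.s.\ finite (condition~(i) in the definition of a bi-measure \Rtree), one obtains
\begin{equation*}
  S_t \tilde\Psi(\smallx)
  \;=\; \int_{T^n} \mu^{\otimes n}(\mathrm d\uvec)\,
  e^{-t\nu(\rspanu)}\,\tilde\Phi\bigl(\tauxn(\uvec)\bigr).
\end{equation*}
Here one uses that on the event $\{\pi_t(\rspanu)=0\}$, the restriction $\nu\restricted{T^{\pi_t}}$ and $\nu$ agree on $\rspanu\subseteq T^{\pi_t}$, so $\tauxn[X_t](\uvec) = \tauxn(\uvec)$ under the integral.

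Having this formula, I would recast the right-hand side as an integral of a fixed bounded continuous test function against the pushforward measure on $\Hn$. Defining $F_t\colon \Hn \to \R$ by $F_t(T',\uvec,\nu') := e^{-t\nu'(T')}\,\tilde\Phi(T',\uvec,\nu')$, the formula becomes $S_t\tilde\Psi(\smallx) = \int_{\Hn} F_t\,\mathrm d(\tauxn)_*\mu^{\otimes n}$. By Proposition~\ref{prop:equivconvergence}(iii), the \LWVconv\ of $\smallx_N$ together with $\|\mu_N\|\to\|\mu\|$ yields weak convergence $(\tauxNn)_*\mu_N^{\otimes n}\Longrightarrow (\tauxn)_*\mu^{\otimes n}$ as finite measures on $\Hn$ equipped with the \sGwtopo, and integrating the bounded sGw-continuous $F_t$ gives $S_t\tilde\Psi(\smallx_N)\to S_t\tilde\Psi(\smallx)$, as required.

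The only subtle point is verifying that $F_t$ is continuous with respect to the stronger \sGwtopo\ on $\Hn$ rather than just the \pGwtopo: this is why one must use formulation (iii) rather than formulation (ii) of Proposition~\ref{prop:equivconvergence}. Fortunately, $\tilde\Phi\in\Pit$ is by construction a product of polynomials evaluated on the subtrees indexed by subsets of the distinguished points, hence sGw-continuous by definition, while the exponential factor reduces to continuity of the total mass $\nu'\mapsto\nu'(T')$, which is a polynomial and therefore already pGw-continuous. Thus $F_t\in\Cb(\Hn)$ with respect to sGw, and no genuine obstacle arises.
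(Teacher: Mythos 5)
Your proposal is correct and follows essentially the same route as the paper: reduce to the convergence-determining class $\tilde{\mathcal F}^1$ on $\HK$ (Proposition~\ref{prop:convdet}), compute $S_t\Psit$ explicitly via the Poisson formula $\mathbb P\(\pi_t(\rspanu)=0\)=e^{-t\nu(\rspanu)}$, and conclude from the \LWVconv\ of the initial trees. The only cosmetic difference is the last step: the paper simply observes that $S_t\Psit$ is again a function in $\tilde{\mathcal F}^1$ (hence LWV-continuous), whereas you verify the same continuity by hand through Proposition~\ref{prop:equivconvergence}\ref{it:sGw} and sGw-continuity of $e^{-t\nu'(T')}\Phit$ — which is exactly what underlies the paper's shortcut.
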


\begin{proof}
Consider the convergence of bi-measure \Rtree s $\smallx_N \toLWV \smallx$. Write $K:=\sup\{\|\mu_N\|,N\in \N\}$,
then the sequence converges in $\H^{K,\sigma}$. Because $\tilde{\mathcal F}^1$ is convergence determining on
$\H^{K,\sigma}$ (see Proposition~\ref{prop:convdet}), it is enough to prove for all $\tilde\Psi\in \tilde{\mathcal F}^1$, $t>0$
that
\begin{equation} \label{e:022}
	\E^{\smallx_N}\bigl[\tilde\Psi(X_t)\bigr]\tNo \E^{\smallx}\bigl[\tilde\Psi(X_t)\bigr].
\end{equation}
Fix therefore $\tilde\Psi=\Psitpol[1]\in\tilde{\mathcal F}^1$. Then
\begin{equation} \label{e:023}
\begin{aligned}
   \E^{\smallx_N}\bigl[\tilde\Psi(X_t)\bigr]
 &=
   \E^{\smallx_N}\Bigl[\int_{(T_N^{\pi_t})^n}\tilde\Phi \circ\tauxNn\;\mathrm{d}\mu_N^{\otimes n}\Bigr]
   \\
 &=
   \int_{T_N^n}\mu_N^{\otimes n}(\mathrm{d}\uvec)\,\mathbb P\bigl(\pi_t^{\smallx_N}(\rspan{\uvec})=0\bigr)\cdot\tilde\Phi\bigl(\rspan {\uvec},\uvec,\nu_N\bigr).
\end{aligned}
\end{equation}
Using $\mathbb P\left(\pi_t^{\smallx_N}(\rspan{\uvec})=0\right) = \exp(-t\nu_N(\rspan{\uvec}))$, we see that
$\E^{\smallx_N}\left[\tilde\Psi(X_t)\right]=\tilde\Psi'(\smallx_N)$ for some  $\tilde\Psi'\in \tilde{\mathcal
F}^1$. The convergence follows therefore from the \LWVconv\ of $(\smallx_N)_{N\in\mathbb{N}}$.
\end{proof}\sm

Consider a separable, metrizable space $E$ and a 
contraction semigroup $S=(S_t)_{t\ge 0}$ on $\Cb(E)$.
We define
\begin{equation}\label{eq:domS}
	\Dom(S) := \big\{f\in\Cb(E):\,\lim_{t\to 0}\|S_t f - f\|_\infty= 0\big\}.
\end{equation}
Note that $\Dom(S)$ is uniformly closed, $S_t$ maps $\Dom(S)$ into itself, and the restriction of $(S_t)_{t\ge 0}$
to $\Dom(S)$ is a strongly continuous contraction semigroup. In particular, the restricted semigroup has a
generator $\Omega_S\colon \Dom(\Omega_S) \to \Dom(S)$ with dense domain $\Dom(\Omega_S)\subseteq \Dom(S)$.

\begin{lemma}\label{lem:Skorohodcont}
	Let\/ $E$ be a separable, metrizable space, and\/ $Y^x=(Y^x_t)_{t\ge0}$, $x\in E$, an\/ $E$\nbd valued,
	Feller-continuous (time-homogeneous) Markov process with c\`adl\`ag paths and semigroup\/
	$S=(S_t)_{t\ge 0}$ on\/ $\Cb(E)$.
	Assume that there is a set\/ $\G\subseteq\Dom(S)$ that is multiplicatively closed and induces the topology of\/ $E$.
	Then the map
	\begin{equation} \label{e:initialstate}
	\begin{array}{ccc}
		\P(E) &\to&     \P\(D_E(\R_+)\), \\
		\eta  &\mapsto& \law(Y^\eta)
	\end{array}
	\end{equation}
	is continuous, where\/ $D_E(\R_+)$ is the space of c\`adl\`ag paths with Skorohod topology, $\law$ is
	the law of a process, and\/ $Y^\eta$ is the process with initial condition\/ $\law(Y^\eta_0)=\eta$,
	i.e., $\law(Y^\eta)=\integral{\law(Y^x)}{\eta}{x}$.
\end{lemma}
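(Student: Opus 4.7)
The plan is to reduce the statement to weak continuity of the single-initial-condition map $x\mapsto\law(Y^x)$ from $E$ to $\P(D_E(\R_+))$, and then to establish that continuity via convergence of finite-dimensional distributions combined with tightness on Skorohod space.

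The reduction uses the integral representation $\law(Y^\eta)=\int\law(Y^x)\,\eta(dx)$: once $x\mapsto\law(Y^x)$ is continuous, for any $F\in\Cb(D_E(\R_+))$ the map $g(x):=\E_x[F(Y^x)]$ is bounded continuous on $E$, so $\eta_N\Rightarrow\eta$ in $\P(E)$ immediately yields $\int g\,d\eta_N\to\int g\,d\eta$, i.e.\ $\law(Y^{\eta_N})\Rightarrow\law(Y^\eta)$.

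For finite-dimensional convergence at $x_N\to x$, the Markov property and induction on $k$ give
\begin{equation*}
\E_y\bigl[F_1(Y_{t_1})\cdots F_k(Y_{t_k})\bigr]=S_{t_1}\bigl(F_1\cdot S_{t_2-t_1}(F_2\cdots S_{t_k-t_{k-1}}F_k)\bigr)(y)
\end{equation*}
for $F_1,\ldots,F_k\in\G$. Since $\G\subseteq\Cb(E)$ is multiplicatively closed and $S_t\colon\Cb(E)\to\Cb(E)$ by Feller continuity, the right-hand side lies in $\Cb(E)$ and hence is continuous at $x$. Because $\G$ induces the separable metrizable topology of $E$ and is multiplicatively closed, Le Cam's theorem (as invoked in the proof of Proposition~\ref{prop:convdet}) makes $\G$ convergence determining on $E$; the same theorem applied to the coordinatewise product family on $E^k$ makes it convergence determining there, so all finite-dimensional distributions converge.

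For tightness of $(\law(Y^{x_N}))_N$ on $D_E(\R_+)$ I would invoke Jakubowski's criterion for completely regular state spaces. Its second condition---tightness of $(F(Y^{x_N}))_N$ in $D_\R(\R_+)$ for each $F$ in a separating family such as $\G$---follows from Aldous's criterion via the estimate
\begin{equation*}
\E_y\bigl[(F(Y_{\sigma+h})-F(Y_\sigma))^2\bigr]\le\bigl\|S_h(F^2)-F^2\bigr\|_\infty+2\|F\|_\infty\bigl\|S_hF-F\bigr\|_\infty,
\end{equation*}
valid for any stopping time $\sigma$ and $h>0$ by the strong Markov property; both norms tend to zero as $h\to 0$ because $F,F^2\in\G\subseteq\Dom(S)$ by multiplicative closure. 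The main obstacle is Jakubowski's compact containment condition, since $E$ is not assumed locally compact: Feller continuity provides only fixed-time tightness of $\law(Y^{x_N}_t)$, and upgrading this to $\P(Y^{x_N}_t\in K,\,\forall t\in[0,T])\ge 1-\varepsilon$ requires a chaining argument built on the Aldous-type estimate above. Once compact containment is established, Jakubowski's theorem yields tightness on $D_E(\R_+)$, and together with the finite-dimensional convergence this gives $\law(Y^{x_N})\Rightarrow\law(Y^x)$, completing the proof.
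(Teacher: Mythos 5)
Your reduction to deterministic initial conditions, your finite-dimensional convergence argument, and your Aldous-type estimate for the real-valued image processes $F(Y^{x_N})$, $F\in\G$, are all sound and parallel to what the paper does. The genuine gap is exactly the point you flag and then defer: Jakubowski's criterion requires the compact containment condition for the $E$-valued processes, and you do not establish it. The ``chaining argument'' you allude to is not given, and it is not clear it can be carried out from the stated hypotheses: $E$ is only separable and metrizable (for the application, $\H^{K,\sigma}$ is not locally compact and is not even known to be Polish, as the paper remarks), so Feller continuity does not even yield uniform tightness of the fixed-time marginals $\law(Y^{x_N}_t)$ --- the converse half of Prohorov's theorem is unavailable --- let alone a uniform-in-time statement of the form $\mathbb P\bigl(Y^{x_N}_t\in K\;\forall t\le T\bigr)\ge 1-\varepsilon$. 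As it stands, the proof is incomplete at its hardest step, and the route chosen makes that step harder than it needs to be.

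The paper avoids compact containment altogether. Since $\G$ induces the topology of $E$, it strongly separates points (Lemma~1 of \cite{BloKour2010}), and Theorem~10 of \cite{BloKour2010} then reduces $\law(Y^{x_N})\Rightarrow\law(Y^{x})$ in $\P\(D_E(\R_+)\)$ to Skorohod convergence of the finite collections of \emph{real-valued} processes $\(f_1(Y^{x_N}_t),\ldots,f_k(Y^{x_N}_t)\)_{t\ge0}$, $f_i\in\G$. For these, f.d.d.\ convergence follows from Feller continuity (as in your argument), and tightness in $D_{\R^k}(\R_+)$ is obtained from Theorem~3.9.4 of \cite{EthierKurtz86}, applied to the algebra $\linhull(\G)\subseteq\Dom(S)$ via functions in the dense domain $\Dom(\Omega_S)$, using the martingales $f(Y^{x_N}_t)-\int_0^t\Omega_Sf(Y^{x_N}_s)\,\mathrm{d}s$ and the uniform bound $\|\Omega_Sf\|_\infty$ --- an estimate of the same nature as your Aldous bound built from $\|S_hf-f\|_\infty$ and $\|S_h(f^2)-f^2\|_\infty$. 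So your work on the image processes can be salvaged essentially verbatim; the fix is to replace Jakubowski's criterion by the Blount--Kouritzin transfer result, which is precisely what removes the compact containment requirement in this non-locally-compact setting.
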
\sm

\begin{proof}
	It is sufficient to prove that $\law(Y^{x_N}) \TNo \law(Y^x)$ for every convergent sequence $x_N\tNo x$ in $E$.
	Because $\G$  induces the topology of $E$, it strongly separates points (see Lemma~1 in \cite{BloKour2010}).
	According to Theorem~10 of \cite{BloKour2010}, it is therefore enough to prove that for all $f_1,\ldots,f_k\in \G$,
	\begin{equation}
		\(f_1(Y^{x_N}_t), \ldots, f_k(Y^{x_N}_t) \)_{t\ge0} \tolNs
			\(f_1(Y^x_t), \ldots, f_k(Y^x_t) \)_{t\ge0}
	\end{equation}
	in Skorohod space as $\R^k$\nbd valued processes.
	The assumed Feller continuity implies f.d.d.\ convergence, hence it is enough to prove tightness.

	To this end, we apply Theorem~3.9.4 of \cite{EthierKurtz86}.
	The linear span $C_a:=\linhull(\G)$ of $\G$ is an algebra contained in $\Dom(S)$, and the domain
	$\Dom(\Omega_S)$ of the generator $\Omega_S$ of $S$ is dense in $\Dom(S)$. For every $f\in
	\Dom(\Omega_S)$, we define $Z^N_t := \Omega_Sf(Y^{x_N}_t)$. Then the following hold:
	\begin{enumerate}
	\item\label{it:martingale} The processes
		$\D\Bigl( f(Y^{x_N}_t)-\int_0^t Z^N_s \;\mathrm{d}s \Bigr)_{t\ge 0} $
		are martingales.
	\item\label{it:bounded} For all $T\ge 0$,
		$\D\sup_{N\in\N}\E\bigl[\,\essup_{0\leq t\leq T}|Z^N_t|\,\bigr] \le \|\Omega_S f\|_\infty<\infty$.
	\end{enumerate}
	Now tightness of the processes
	$\(f_1(Y^{x_N}_t), \ldots, f_k(Y^{x_N}_t) \)_{t\ge0}$, $N\in\N$, for every fixed $f_1,\ldots,f_k\in
	C_a\supseteq\G$ follows from \cite[Thm.~3.9.4]{EthierKurtz86}.
\end{proof}\sm

\begin{theorem}[Continuity in the initial distribution]
	The law of\/ $X$ on the Skorohod space depends continuously on the initial condition.
\label{theo:main}
\end{theorem}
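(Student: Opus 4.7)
The plan is to derive the theorem from Lemma~\ref{lem:Skorohodcont}, applied to $E:=\Hbi$ and $Y^\smallx:=X^\smallx$. Three of the four hypotheses of that lemma are already available: $\Hbi$ is separable and metrizable by Corollary~\ref{cor:metrizable}; the paths of $X$ are c\`adl\`ag by Lemma~\ref{lem:cadlag}; and Feller continuity is Proposition~\ref{prop:feller}. What remains is to exhibit a multiplicatively closed set $\G\subseteq\Dom(S)$ that induces the LWV-topology on $\Hbi$.

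My candidate is $\G:=\tilde{\mathcal F}$ from~(\ref{e:Fgamma}). This set already induces the LWV-topology by Lemma~\ref{lem:topinduce}, and its multiplicative closure was verified in the proof of Proposition~\ref{prop:convdet}. Moreover, for every $\tilde\Psi=\Psitpol\in\tilde{\mathcal F}$, the bound $|\tilde\Psi(\smallx)|\le\|\tilde\Phi\|_\infty\cdot|\gamma(\|\mu\|)|\cdot\|\mu\|^n$, together with the super-polynomial decay of $\gamma$, shows that $\tilde\Psi\in \Cb(\Hbi)$.

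The heart of the proof will be the verification $\tilde{\mathcal F}\subseteq\Dom(S)$, i.e., $\|S_t\tilde\Psi-\tilde\Psi\|_\infty\to 0$ as $t\to 0$ for every $\tilde\Psi\in\tilde{\mathcal F}$. Proceeding as in the proof of Proposition~\ref{prop:feller}, I would start from
\begin{equation*}
S_t\tilde\Psi(\smallx)-\tilde\Psi(\smallx)=\int\mu^{\otimes n}(\mathrm d\uvec)\Bigl\{\E^\smallx\bigl[\gamma\bigl(\mu(T^{\pi_t})\bigr)\mathds 1_{\{\pi_t(\rspan{\uvec})=0\}}\bigr]-\gamma(\|\mu\|)\Bigr\}\tilde\Phi\bigl(\tauxn(\uvec)\bigr),
\end{equation*}
and control the integrand by splitting into the contribution of (i) the Poisson process firing on $\rspan{\uvec}$, whose probability is $1-e^{-t\nu(\rspan{\uvec})}$, and (ii) the loss of sampling mass, whose expected value equals $\int\mu(\mathrm dw)(1-e^{-t\nu([\rho,w])})$. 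The super-polynomial decay of $\gamma$ is then used to absorb the polynomial factors in $\|\mu\|$ that emerge in these estimates, and the uniform continuity of $\gamma$ converts smallness of these Poisson quantities into smallness of the difference above.

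The main obstacle will be the \emph{uniformity} in $\smallx\in\Hbi$ of this convergence: because $\Hbi$ is not locally compact and $\nu$ can carry infinite mass, neither the expected mass loss nor the Poisson-deletion probability admits a bound uniform over all of $\Hbi$. The key is that the super-polynomial decay of $\gamma$ effectively restricts the functional $\tilde\Psi$ to bi-measure trees whose sampling mass lies in a bounded interval; on such a sub-family, relative compactness in the LWV-topology (as in the compactness lemma for $\Sx$) yields uniform control of the relevant Poisson rates and of the modulus of continuity of $\gamma$. Once the inclusion $\tilde{\mathcal F}\subseteq\Dom(S)$ is secured, Lemma~\ref{lem:Skorohodcont} applies verbatim and delivers the claim.
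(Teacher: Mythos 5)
Your overall strategy coincides with the paper's: reduce to Lemma~\ref{lem:Skorohodcont} via c\`adl\`ag paths (Lemma~\ref{lem:cadlag}), Feller continuity (Proposition~\ref{prop:feller}), and a multiplicatively closed, topology-inducing class inside $\Dom(S)$. The gap is your choice of class: the inclusion $\tilde{\mathcal F}\subseteq\Dom(S)$ is \emph{false}, and the compactness argument you sketch cannot repair it. The culprit is precisely the prefactor $\gamma(\|\mu\|)$: under pruning, $\|\mu\|$ drops by the $\mu$-mass above cut points, and the rate of this loss is governed by $\nu$ on the paths leading to that mass, which is unbounded over the state space — even over $\HK$ — whereas the decay built into $\Phit\in\Pit$ only controls $\nu$ on the subtree spanned by the sampled points, not on paths to the remaining $\mu$-mass. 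Concretely, let $T$ be two unit segments glued at the root, with endpoints $z_1,z_2$ and midpoints $w_1,w_2$, let $\mu=\delta_{z_1}+\delta_{z_2}$, $\nu_M=\delta_{w_1}+M\delta_{w_2}$, and take $\tilde\Psi(\smallx)=e^{-\|\mu\|}\int\mu(\mathrm{d}u)\,e^{-\nu(\{\rho\})}e^{-\nu([\rho,u])}\in\tilde{\mathcal F}$ (here $n=1$ and all $\gamma$'s are $e^{-x}$). A direct computation gives
\begin{equation*}
	\tilde\Psi(\smallx_M)=e^{-2}\bigl(e^{-1}+e^{-M}\bigr),\qquad
	S_t\tilde\Psi(\smallx_M)=e^{-t-1}\bigl(e^{-tM}e^{-2}+(1-e^{-tM})e^{-1}\bigr)+O(e^{-M}),
\end{equation*}
so letting $M\to\infty$ at fixed $t\in(0,1)$ yields $\sup_{\smallx}\bigl|S_t\tilde\Psi(\smallx)-\tilde\Psi(\smallx)\bigr|\ge e^{-t-2}-e^{-3}$, which does not vanish as $t\to0$. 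Note that $\|\mu\|=2$ here, so the failure already occurs on $\H^{2,\sigma}$: bounding the sampling mass does not help, $\HK$ is not compact (compactness of $\Sx$ requires a dominating pair $(\mu,\nu)$ on one fixed tree, a much stronger hypothesis), and membership in $\Dom(S)$ requires uniformity over the whole space on which the semigroup acts, so "effective restriction to a compact family" is not available.

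The paper's proof sidesteps exactly this point by discarding the $\gamma(\|\mu\|)$ factor: it first notes that any LWV-convergent sequence of initial states lies in some $\HK$, which the process a.s.\ never leaves, and then applies Lemma~\ref{lem:Skorohodcont} on $\HK$ with $\G=\Ft^1$ (i.e.\ $\gamma\equiv1$), which is bounded on $\HK$, multiplicatively closed and topology-inducing there. For $\Psit=\Psi^{1,n,\Phit}\in\Ft^1$ the semigroup has the exact form $S_t\Psit(\smallx)=\int\mu^{\otimes n}(\mathrm{d}\uvec)\,e^{-t\nu(\rspanu)}\,\Phit\bigl(\tauxn(\uvec)\bigr)$ — no mass-loss term appears because the test function only sees the sampled subtrees — and strong continuity follows uniformly on $\HK$ from $\sup_{\smallx\in\HK}\bigl|S_t\Psit(\smallx)-\Psit(\smallx)\bigr|\le K^n\sup_{x\ge0}\gammaPhi(x)\bigl(1-e^{-tx}\bigr)\to0$. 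If you replace $\tilde{\mathcal F}$ by $\Ft^1$ and work on $\HK$, the rest of your argument goes through as written.
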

\begin{proof}
	It is sufficient to prove continuity for deterministic initial conditions.
	Every convergent sequence $\smallx_N\toLWVN\smallx$ in $\Hbi$ is contained in $\HK$ for some $K>0$, and
	the pruning process stays a.s.\ in that subspace. We verify the conditions of Lemma~\ref{lem:Skorohodcont}
	for the $\HK$\nbd valued pruning process. It has c\`adl\`ag paths (Lemma~\ref{lem:cadlag}), is
	Feller-continuous (Proposition~\ref{prop:feller}), and $\Ft^1\subseteq \Cb(\HK)$ is multiplicatively
	closed and induces the \LWVtopo. It remains to show that $\Ft^1 \subseteq \Dom(S)$, where $S$ is the
	$\Cb(\HK)$\nbd semigroup.
	
	For $\Phit\in\Pit$, $x\in\R_+$, we define
	\begin{equation}\label{eq:gammaPhi}
		\gammaPhi(x) := \sup_{(T, \uvec, \nu)\in\Hn,\,\|\nu\|=x} \bigl|\Phit(T, \uvec,\nu)\bigr|
	\end{equation}
	and note that $\lim_{x\to\infty}\gammaPhi(x) =0$.
	Using Fubini's theorem, we obtain for $\Psit=\Psi^{1, n,\Phit}\in\Ft^1$ and
	$\smallx=(T, \mu, \nu)\in\HK$
	\begin{equation}\label{eq:StPsi}\begin{aligned}
		S_t\Psit(\smallx) &= \intmu{\mathbb P^\smallx\(\pi_t^\smallx(\rspanu)=0\)
			\cdot\tilde\Phi(\tauxn(\uvec)\bigr)} \\
		&=\intmu{e^{-t\nu(\rspanu)} \cdot \Phit\(\tauxn(\uvec)\)}.
	\end{aligned}\end{equation}
	Therefore,
	\begin{equation*}
		\sup_{\smallx\in\HK} \bigl| S_t\Psit(\smallx)-\Psit(\smallx) \bigr|
			\le K^n \sup_{x\in\R_+} \gammaPhi(x)\(1-e^{-tx}\) \convto[0]{t} 0.\qedhere
	\end{equation*}
\end{proof}\sm

\subsection{The infinitesimal generator}
\label{sec:generator}

In this subsection we calculate the action of the generator on the test functions
$\tilde\Psi\in\tilde{\mathcal F}^1$. For these functions to be bounded, we have to work on the space $\HK$.
Note that $\HK$ is a good state space for the pruning process, as once started in $\HK$, it will never leave the space.
In the following we write
\begin{equation}
   \bigl(\Omega,\Dom(\Omega)\bigr) \quad\mbox{ and }\quad \bigl(\Omega_K,\Dom(\Omega_K)\bigr)
\end{equation}
for the infinitesimal generators of the pruning process with state spaces $\Hbi$ and $\HK$ respectively.

\begin{proposition}[Infinitesimal Generator]
	For every\/ $K>0$, we have\/ $\Ft^1\subseteq\Dom(\Omega_K)$. Furthermore, for\/ $\Psit=\Psitpol[1]\in\Ft^1$ and\/
	$\smallx=(T,\mu,\nu)\in\HK$,
	\begin{align}\label{eq:gen}
	    \Omega\Psit\bigl(\smallx\bigr) &=
		    \int\nu(\mathrm{d}v)\bigl[\Psit\bigl((T^{v},\mu,\nu)\bigr)-\Psit\bigl(\smallx\bigr)\bigr] \\
		\label{eq:genpsi}
	     &= -\int\mu^{\otimes n}(\mathrm{d}\underline{u})\;\nu\bigl(\rspanu\bigr)\tilde\Phi\bigl(\tauxn(\uvec)\bigr).
	\end{align}
\label{prop:gen}
\end{proposition}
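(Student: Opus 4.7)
The natural starting point is the closed-form expression for the semigroup already computed in the proof of Theorem~\ref{theo:main}: for $\tilde\Psi=\Psi^{1,n,\tilde\Phi}\in\Ft^1$ and $\smallx=(T,\mu,\nu)\in\HK$, equation~\eqref{eq:StPsi} gives
\begin{equation*}
	S_t\Psit(\smallx) = \intmu{e^{-t\nu(\rspanu)}\,\Phit\(\tauxn(\uvec)\)}.
\end{equation*}
Differentiating the integrand at $t=0$ via $\frac{\mathrm{d}}{\mathrm{d}t}e^{-tx}\!\restricted{t=0}=-x$ and formally passing the limit inside the integral immediately produces the candidate formula~\eqref{eq:genpsi}. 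The plan is therefore to show that this passage to the limit is uniform in $\smallx$ over $\HK$, which yields $\Psit\in\Dom(\Omega_K)$ together with the stated expression, and then to reconcile~\eqref{eq:genpsi} with~\eqref{eq:gen} by Fubini.

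For the uniform bound I would use the elementary inequality $\bigl|\tfrac{e^{-tx}-1}{t}+x\bigr|\le\tfrac{tx^{2}}{2}$ for $t,x\ge 0$, which gives
\begin{equation*}
	\Bigl|\tfrac{S_t\Psit-\Psit}{t}(\smallx) - \int\mu^{\otimes n}(\mathrm{d}\uvec)\bigl(-\nu(\rspanu)\bigr)\Phit\(\tauxn(\uvec)\)\Bigr|
	\;\le\;\tfrac{t}{2}\intmu{\bigl|\Phit\(\tauxn(\uvec)\)\bigr|\,\nu(\rspanu)^{2}}.
\end{equation*}
Writing $\Phit=\prod_{I\subseteq\{1,\dots,n\}} \Phi_{I}^{\gamma_I,m_I,\vphi_I}\circ\pi^n_I$ as in~\eqref{e:004}, the factor indexed by $I=\{1,\dots,n\}$ carries a weight $\gamma(\nu(\rspanu))\cdot\nu(\rspanu)^{m}$ up to the bounded polynomial, and the remaining factors are uniformly bounded. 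Hence $|\Phit(\tauxn(\uvec))|\cdot\nu(\rspanu)^{2}\le C\,\nu(\rspanu)^{m+2}\gamma(\nu(\rspanu))$, which is a bounded function of $\nu(\rspanu)$ thanks to the super-polynomial decay of $\gamma$. Consequently the right-hand side is $\le tC'\|\mu\|^{n}\le tC'K^{n}\to 0$ uniformly on $\HK$, which both establishes $\Psit\in\Dom(\Omega_K)$ and identifies $\Omega\Psit$ with~\eqref{eq:genpsi}. The same bound also shows that $\Omega\Psit$ is bounded, and continuity follows because it is a uniform limit of the continuous functions $(S_t\Psit-\Psit)/t$ (Feller continuity of $S_t$, Proposition~\ref{prop:feller}).

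For the equivalence of~\eqref{eq:gen} and~\eqref{eq:genpsi}, I would use that $T^{v}$ from~\eqref{e:prun} satisfies $\uvec\in(T^{v})^{n}\iff v\notin\rspanu$, so $\mathds{1}_{\uvec\in(T^{v})^{n}}-1=-\mathds{1}_{v\in\rspanu}$. This yields
\begin{equation*}
	\Psit(T^{v},\mu,\nu)-\Psit(\smallx)
	= -\intmu{\mathds{1}_{v\in\rspanu}\,\Phit\(\tauxn(\uvec)\)}.
\end{equation*}
Integrating against $\nu(\mathrm{d}v)$ and swapping integrals by Fubini gives precisely $-\intmu{\nu(\rspanu)\,\Phit(\tauxn(\uvec))}$, proving~\eqref{eq:gen}$=$\eqref{eq:genpsi}. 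The main obstacle is justifying Fubini and the uniform convergence despite $\nu$ being possibly non-locally finite and $\nu(\rspanu)$ unbounded on $\HK$; both are handled by the same key observation above, namely that the super-polynomial decay built into the $I=\{1,\dots,n\}$\nbd factor of every $\Phit\in\Pit$ absorbs arbitrary polynomial powers of $\nu(\rspanu)$ and makes the whole integrand dominated by a constant times $\|\mu\|^{n}$.
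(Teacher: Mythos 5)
Your proposal is correct and follows essentially the same route as the paper: differentiate the explicit semigroup formula \eqref{eq:StPsi}, control the remainder with the elementary bound $|1-e^{-x}-x|\le x^{2}$ uniformly over $\HK$ using the super-polynomial decay of the $\gamma$-factor in $\Phit$ (which the paper packages as boundedness of $x^{2}\gammaPhi(x)$, with $\gammaPhi$ from \eqref{eq:gammaPhi}), and then pass from \eqref{eq:genpsi} to \eqref{eq:gen} via the identity $\mathds 1_{\{\uvec\in(T^{v})^{n}\}}-1=-\mathds 1_{\{v\in\rspanu\}}$ and Fubini. Your explicit unpacking of the product structure of $\Pit$ to dominate $\nu(\rspanu)^{2}\bigl|\Phit(\tauxn(\uvec))\bigr|$ is just a spelled-out version of the paper's $\gammaPhi$-argument, and your integrability check is exactly what justifies the paper's appeal to Fubini.
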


\begin{proof}
Using Formula~\eqref{eq:StPsi}, we obtain for $\tilde\Psi=\Psitpol[1]\in\tilde{\mathcal F}^1$, $\smallx\in\HK$,
\begin{equation}\label{eq:OmegaPsi}
	\tfrac{1}{t}\(S_t\Psit(\smallx) -\Psit(\smallx)\)
	 = -\tfrac{1}{t} \int\mu^{\otimes n} (\mathrm{d}\underline{u})\,
		\(1-e^{-t\nu(\rspanu)}\)\Phit(\tauxn(\uvec)\bigr).
\end{equation}
Note that $|1-e^{-x}-x|\le x^2$, for all $x\ge 0$ and recall the definition of $\gammaPhi$ from
\eqref{eq:gammaPhi}. Comparing \eqref{eq:OmegaPsi} to \eqref{eq:genpsi}, we see that
\begin{equation}
\begin{aligned} \label{e:027}
   &\sup_{\smallx\in\HK} \Big| \tfrac{1}{t}\(S_t\Psit(\smallx) -\Psit(\smallx)\)
   	+\int\mu^{\otimes n}(\mathrm{d}\underline{u})\,\nu\bigl(\rspanu\bigr)\Phit\(\tauxn(\uvec)\)\Big|
   \\
   &\le \sup_{\smallx\in\HK} t \cdot \int\mu^{\otimes n} (\mathrm{d}\underline{u})\,
	\nu\(\rspanu\)^2\,\bigl|\Phit(\tauxn(\uvec))\bigr| \le tK^n \sup_{x\in\R_+} x^2\gammaPhi(x).
\end{aligned}
\end{equation}
Due to our assumptions on $\Phit\in\Pit$, $x^2\gammaPhi(x)$ is bounded, and we obtain uniform convergence of
$\frac1t(S_t\Psit - \Psit)$ on $\HK$ for $t\to 0$. Hence $\Ft^1\subseteq\Dom(\Omega_K)$ and
Formula~(\ref{eq:genpsi}) are proven.  \sm

We next prove Formula~(\ref{eq:gen}). Notice that for all $\uvec\in T^n$,
\begin{equation}\label{e:029}
	\nu(\rspanu) = \intset{T}{\mathds 1_{\{v\in \rspanu\}}}{\nu}{v}
	= \intset{T}{1-\mathds 1_{\{\uvec\in (T^v)^n\}}}\nu{v}.
\end{equation}
Inserting the latter into \eqref{eq:genpsi} and using Fubini's theorem yields
\begin{equation} \label{e:030}
   \Omega\Psit(\smallx) =
    \int_T\nu(\mathrm{d}v)\,\biggl(\int_{(T^v)^n}\mu^{\otimes n}(\mathrm{d}\underline{u})\,\tilde\Phi\bigl(\tauxn(\uvec)\bigr)-
    \int_{T^n}\mu^{\otimes n}(\mathrm{d}\underline{u})\,\tilde\Phi\bigl(\tauxn(\uvec)\bigr)\biggr),
\end{equation}
which gives (\ref{eq:gen}).
\end{proof}\sm

\comment{
In this subsection we want to give the action of the generator on the test functions $\tilde\Psi\in\tilde{\mathcal F}^1$.
For these to be in the domain of the generator, we have to work on a subspace of bi-measure \Rtree s with
(uniformly) bounded second moments of the total pruning measure of the path to the root from a randomly sampled
point.

\begin{definition}[The space $\HC$]	
	For $C,K\in\R_+$, we define the subspace
	\begin{equation}
		\HC := \Bset{(T,\mu,\nu)\in \HK}{\int\mu^{\otimes n}(\mathrm{d}u)\,\nu^2\bigl([\rho, u]\bigr)\le C}
	\end{equation}
\end{definition}

\begin{remark}\label{rem:Cn}
	Let $C, K\in\R_+$ and $\smallx\in\HC$. Then for all $n\in\N$, we obtain
	\begin{equation}
		\plainint{\nu^2\(\rspan{\cdot}\)}{\mu^{\otimes n}} \le \intmu{\Bigl(\sum_{k=1}^n \nu\([\rho,
		u_k]\)\Bigr)^2} \le C_n < \infty,
	\end{equation}
	for $C_n := nCK^{n-1} + (n^2-n)(C+K)^2K^{n-2}$.
\end{remark}

Note that $\HC$ is a good state spaces for the pruning process, as once started in $\HC$, it will never leave the space.
In the following we write
\begin{equation}
   \bigl(\Omega,\Dom(\Omega)\bigr) \quad\mbox{ and }\quad \bigl(\Omega_C,\Dom(\Omega_C)\bigr)
\end{equation}
for the infinitesimal generators of the pruning process with state spaces $\Hbi$ and $\HCC$ respectively.

\begin{proposition}[Infinitesimal Generator]
	For every\/ $C>0$, we have\/ $\Ft^1\subseteq\Dom(\Omega_C)$. Furthermore, for\/ $\Psit=\Psitpol[1]\in\Ft^1$ and\/
	$\smallx=(T,\mu,\nu)\in\HCC$,
	\begin{align}\label{eq:gen}
	    \Omega\Psit\bigl(\smallx\bigr) &=
		    \int\nu(\mathrm{d}v)\bigl[\Psit\bigl((T^{v},\mu,\nu)\bigr)-\Psit\bigl(\smallx\bigr)\bigr] \\
		\label{eq:genpsi}
	     &= -\int\mu^{\otimes n}(\mathrm{d}\underline{u})\;\nu\bigl(\rspanu\bigr)\tilde\Phi\bigl(\tauxn(\uvec)\bigr).
	\end{align}
\label{prop:gen}
\end{proposition}

\begin{proof}
Using Formula~\eqref{eq:StPsi}, we obtain for $\tilde\Psi=\Psitpol[1]\in\tilde{\mathcal F}^1$, $\smallx\in\HCC$,
\begin{equation}\label{eq:OmegaPsi}
	\tfrac{1}{t}\(S_t\Psit(\smallx) -\Psit(\smallx)\)
	 = -\tfrac{1}{t} \int\mu^{\otimes n} (\mathrm{d}\underline{u})\,
		\(1-e^{-t\nu(\rspanu)}\)\Phit(\tauxn(\uvec)\bigr).
\end{equation}
Note that $|1-e^{-x}-x|\le\tfrac{x^2}{2}$, for all $x\ge 0$, and recall the constants $C_n$ defined in
Remark~\ref{rem:Cn}. Comparing \eqref{eq:OmegaPsi} to \eqref{eq:genpsi}, we see that
\begin{equation}
\begin{aligned} \label{e:027}
   &\sup_{\smallx\in\HCC} \Big| \tfrac{1}{t}\(S_t\Psit(\smallx) -\Psit(\smallx)\)
   	+\int\mu^{\otimes n}(\mathrm{d}\underline{u})\,\nu\bigl(\rspanu\bigr)\Phit\(\tauxn(\uvec)\)\Big|
   \\
   &\le \sup_{\smallx\in\HCC} \tfrac{t}{2} \int\mu^{\otimes n} (\mathrm{d}\underline{u})\,
		\nu\(\rspanu\)^2\,\|\Phit\|_\infty \le \tfrac t2 C_n\|\Phit\|_\infty \convto[0]t 0,
\end{aligned}
\end{equation}
so that  $\tilde{\mathcal F}^1\subseteq\Dom(\Omega_C)$ and Formula~(\ref{eq:genpsi}) are proven.  \sm

We next prove Formula~(\ref{eq:gen}). Notice that for all $\uvec\in T^n$,
\begin{equation}\label{e:029}
	\nu(\rspanu) = \intset{T}{\mathds 1_{\{v\in \rspanu\}}}{\nu}{v}
	= \intset{T}{1-\mathds 1_{\{\uvec\in (T^v)^n\}}}\nu{v}.
\end{equation}
Inserting the latter into \eqref{eq:genpsi} and using Fubini's theorem yields
\begin{equation} \label{e:030}
   \Omega\Psit(\smallx) =
    \int_T\nu(\mathrm{d}v)\,\biggl(\int_{(T^v)^n}\mu^{\otimes n}(\mathrm{d}\underline{u})\,\tilde\Phi\bigl(\tauxn(\uvec)\bigr)-
    \int_{T^n}\mu^{\otimes n}(\mathrm{d}\underline{u})\,\tilde\Phi\bigl(\tauxn(\uvec)\bigr)\biggr),
\end{equation}
which gives (\ref{eq:gen}).
\end{proof}\sm
}


\section{Examples}
\label{sec:examples}
In this section we want to apply Theorem \ref{theo:main} to obtain convergence of various pruning processes that appear in the literature.
We first recall the excursion representation of a measure \Rtree. We denote by
\begin{equation}
\label{e:exc}
   \mathcal E:=\bigl\{e\colon [0,1]\to\R_+ \bigm| e \text{ is l.s.c.},\, e(0)=e(1)=0\bigr\}
\end{equation}
the set of lower semi-continuous excursions on $[0,1]$. From each excursion $e\in \mathcal E$, we can define a
measure \Rtree\ in the following way:

\begin{itemize}
	\item $r_e(x,y):=e(x)+e(y)-2\inf_{[x,y]}e$ is a pseudo-distance on $[0,1]$,
	\item $x,y\in [0,1]$ are said to be equivalent, $x\sim_e y$, if $ r_e(x,y)=0$,
	\item the image of the projection $\pi_e:[0,1]\rightarrow [0,1]/{\sim_e}$ endowed with the push forward of
		$r_e$ (again denoted $r_e$), i.e.\ $T_e:=(T_e,r_e,\rho_e):=\(\pi_e([0,1]),r_e,\pi_e(0)\)$, is a $0$-hyperbolic space (for example, \cite[Lemma~3.1]{EvaWin2006}).
	\item We endow this space with the probability measure $\mu_e:=\pi_e{}_\ast \lambda_{[0,1]}$ which is
		the push forward of the Lebesgue measure on $[0,1]$.
\end{itemize}
We denote by $g:\mathcal E\to \H_\rho$ the resulting ``glue function'',
\begin{equation}
\label{e:glue}
   g(e)
 :=
   \big(T_e,\mu_e\big),
\end{equation}
which sends an excursion to a rooted probability measure $\R$-tree. The map
$g$ is continuous if $\H_\rho$ is endowed with the Gromov-weak topology, and $\mathcal E$ with the uniform topology (see
\cite[Prop.~2.9]{AbrahamDelmasHoscheit:exittimes} for the case of continuous excursions) or, more generally, with the
weaker {\em excursion topology} introduced in \cite{Loehr:Gromovmetric} (see Theorem~4.8 there).

\begin{example}[An approach via excursions]
	Consider a sequence of random excursions $e_N=(e_N(s),s\in [0,1])\in \mathcal E$, $N\in\N$, that converges in
	distribution (with respect to the uniform, respectively the excursion topology) to $e\in \mathcal E$. For each $N\in\mathbb{N}$, we denote by $(X_t^N)_{t\geq0}$ the
	pruning process started in the bi-measure tree $\smallx_{e_N}:=(T_{e_N},\mu_{e_N},\lambda_{T_{e_N}})\in \Hbi$,
	where $\lambda_{T_{e_N}}$ is the length measure on $T_{e_N}$, and similarly for $(X_t)_{t\geq 0}$ and $\smallx_e$.

	Due to continuity of $g$, we have that $g(e_N)$ converges Gromov-weakly in distribution to $g(e)$.
	By Proposition~\ref{prop:lengthconv}, we obtain the {\rm LWV}-convergence in distribution of $\smallx_{e_N}$ to
	$\smallx_e$, and by Theorem~\ref{theo:main}, we get the Skorohod convergence
		\[ (X^N_t)_{t\geq 0} \toSk (X_t)_{t\geq 0} \]
	as $\Hbi$-valued processes with \LWVtopo. Note that this, in particular, implies Skorohod convergence of the pruning
	processes $(T_{e_N}^{\pi_t}, \mu_{e_N})_{t\ge0}$ as measure \Rtree-valued processes in the usual
	Gromov-weak topology, where we do not keep track of the pruning measure.
\label{ex:convexcur}
\end{example}\sm

We shall apply this example to Galton-Watson trees. Consider a critical or sub-critical Galton-Watson tree $\G$ with
offspring distribution $\eta$ on $\N_0$, i.e., every node in the discrete tree has a random number of children given
independently by the distribution $\eta$, where $\E[\eta]\leq 1$. Encode $\G$ as a rooted \Rtree\ with unit
length edges. For each $N\in\mathbb{N}$, let $\G_N$ be the tree $\G$ conditioned to have $N$ nodes (in addition to the root).
We consider two different sampling measures $\mu$ on $\G_N$:
one is the {\em normalized length measure}
\begin{equation}
\label{e:muske}
   \mu_N^{\rm ske}
 :=
   \tfrac1N\lambda_{\G_N},
 \end{equation}
and the second is the {\em uniform measure on the nodes},
\begin{equation}
\label{e:munod}
   \mu_N^{\rm nod}
 :=
    \tfrac 1N\sum_{i=1}^{N} \delta_{x_i},
 \end{equation}
where $\{x_1,..., x_N\}$ are the nodes of $\G_N$. Notice that
\begin{equation}
\label{e:014}
   \mu_N^{\rm nod}\big(A\big)
 =
   \sum_{x\in {\rm nod}(A)} \mu_N^{\rm ske}\big([x_-,x]\big)
 \leq
   \mu_N^{\rm ske}\big(\set{v\in\G_N}{r_{\G_N}(v,A)<1}\big)
\end{equation}
 where ${\rm nod}(A)$ is the set of nodes in $A$ and $x_-$ is the parent of $x$.

In order to obtain convergence, we rescale the tree $\G_N$ to have edge lengths $a_N>0$, i.e., we leave the set
unchanged and multiply the metric by $a_N$. We denote the rescaled tree by $a_N\G_N$. As
\begin{equation}
\label{e:leaN}
   d^{a_N\G_N}_{\rm Pr}\big(\mu_N^{\rm ske}, \mu_N^{\rm nod}\big) \le a_N
\end{equation}
on the rescaled tree by (\ref{e:014}),
$\mu_N^{\rm nod}$ and $\mu_N^{\rm ske}$ become arbitrary close
whenever $a_N$ converges to zero, as $N\to\infty$.

We also consider two different pruning measures $\nu$: one is the \emph{length measure on the rescaled tree},
\begin{equation}
\label{e:015}
   \nu_N^{\rm ske} := \lambda_{a_N\G_N} = a_N\cdot N\cdot\mu_N^{\rm ske},
\end{equation}
and the second is a suitably rescaled \emph{uniform measure on the nodes},
\begin{equation}
   \nu_N^{\rm nod} := a_N\cdot N\cdot\mu_N^{\rm nod}.
\end{equation}

In order to be in a position to apply Example~\ref{ex:convexcur}, we associate the conditioned and rescaled
bi-measure Galton-Watson tree with an excursion.  That is,
by the depth-first search algorithm we obtain a graph-theoretic path $\rho=y_0, y_1,..., y_{2N-1}, y_{2N} =\rho$ in
the discrete tree, which traverses each edge exactly twice. The {\em contour process}
$(C_N(t),0\leq t\leq 1)$ of $\G_N$ is the linear interpolation of $C_N(\frac{k}{2N}) := h(y_k) := r_{\G_N}(\rho, y_k)$,
$k=0,..., 2N$. Note that in our definition of $C_N$, the domain is normalized to $[0,1]$, and we obtain that
\begin{equation}
   g(C_N)=\big(\G_N, \mu_N^{\rm ske}\big).
\end{equation}\sm


\begin{example}[Brownian CRT]\label{ex:CRT}
Let the variance $\sigma^2$ of $\eta$ be finite and choose
\begin{equation}
   a_N
 :=
   \tfrac{\sigma}{\sqrt{N}}.
\end{equation}

We know from Theorem~23 in \cite{Ald1993} that $(a_NC_N(t),0\leq t\leq 1)$ converges uniformly in distribution to
$(2B(t),0\leq t\leq 1)$, where $B$ is the standard Brownian excursion. We now apply Example~\ref{ex:convexcur} and get
the {\rm LWV}\nbd convergence in distribution of the bi-measure \Rtree s
\begin{equation}
\label{e:CRT}
   \big(\tfrac{\sigma}{\sqrt{N}}\G_N, \mu_N^{\rm ske}, \nu_N^{\rm ske}\big)
 \toLWVN
   \big(CRT, \mu, \lambda_{CRT}\big),
\end{equation}
where $(CRT,\mu)=g(2B)$ is the \Rtree\ called Brownian continuum random tree, and $\lambda_{CRT}$ is the length measure
on the Brownian CRT.

By Corollary~\ref{cor:changemu} and Lemma~\ref{lem:changenu}, we also have the convergence
\begin{equation}
\label{e:CRTchoices}{\big(\tfrac{\sigma}{\sqrt{N}}\mathcal G_N, \mu_N, \nu_N\big)\toLWVN \big(CRT, \mu, \lambda_{CRT}\big)}
\end{equation}
for all choices of $\mu_N\in\{\mu_N^{\rm ske},\mu_N^{\rm nod}\}$ and $\nu_N\in\{\nu_N^{\rm ske},\nu_N^{\rm nod}\}$. Finally we have the convergence of the
pruning processes in Skorohod space:
\begin{equation}
\left(\tfrac{\sigma}{\sqrt{N}}\mathcal G_N^{\pi_t},\mu_N, \nu_N\right)_{t\geq0} \toSk[LWV] \left(CRT^{\pi_t},\mu, \lambda_{CRT}\right)_{t\geq0}.
\end{equation}

In particular,
\begin{equation}
\left(\tfrac{\sigma}{\sqrt{N}}\mathcal G_N^{\pi_t},\mu_N^{\rm nod}\right)_{t\geq0} \toSk[Gw] \left(CRT^{\pi_t},\mu\right)_{t\geq0}.
\end{equation}
Notice that for $\nu_N=\nu_N^{\rm ske}$, the pruning process $(\mathcal G_N^{\pi_t})_{t\geq0}$ is, up to the time
transformation $u=e^{-t / \sqrt{N}}$, the same as the pruning process $(\mathcal G^{\rm AP}_u)_{u\in [0,1]}$ uniformly
on the edges of Aldous and Pitman in \cite{AldousPitman1998}. The process on the right hand side is the one considered
by Aldous and Pitman \cite{AldousPitman1998b} and by Abraham and Serlet \cite{AbrahamSerlet2002} for example.
\label{Exp:003}
\end{example}\sm

\begin{example}[$\alpha$-stable L\'{e}vy tree]
We know from Theorem 3.1 of \cite{Duq2003} that if $\eta$ is in the domain of attraction of an $\alpha$-stable
distribution with $\alpha\in (1,2]$, then there exists a sequence $a_N$ such that $(a_N C_N(t),0\leq t\leq 1)$ converges
uniformly in distribution to $(H(t),0\leq t\leq 1)$, where $H$ is a continuous excursion that codes an $\alpha$-stable
L\'{e}vy tree, $(LT_\alpha,\mu):=g(H)$. More precisely, for $\eta(k)\sim_{k\rightarrow \infty}Ck^{-1-\alpha}$, we have $a_N=N^{-\bar\alpha}\left(\frac{\alpha(\alpha-1)}{C\Gamma(2-\alpha)}\right)^{-1/\alpha}$ with $\bar\alpha=1-1/\alpha$ (see Section 1.2 in \cite{CurHaa2012}). As in Example \ref{ex:CRT}, we obtain
\begin{equation}
\left(a_N\mathcal G_N^{\pi_t},\mu_N, \nu_N\right)_{t\geq0} \toSk[LWV] \left(LT_\alpha^{\pi_t},\mu, \lambda_{LT_\alpha}\right)_{t\geq0}
\end{equation}
or more precisely
\begin{equation}
\left(\frac{1}{N^{\bar\alpha}}\mathcal G_N^{\pi_t},\mu_N, \nu_N\right)_{t\geq0} \toSk[LWV] \left(\left(\frac{\alpha(\alpha-1)}{C\Gamma(2-\alpha)}\right)^{1/\alpha}LT_\alpha^{\pi_t},\mu, \lambda_{LT_\alpha}\right)_{t\geq0}.
\end{equation}
where $\mu_N=\mu_N^{\rm ske}$ or $\mu_N^{\rm nod}$ and $\nu_N= \nu_N^{\rm ske}$ or $\nu_N^{\rm nod}$.
\label{Exp:004}
\end{example}\sm

\begin{example}[Pruning at a height]
As before we consider the Gromov-weak convergence $\left(a_N\mathcal G_N,\mu_N\right) \toGw \left(LT_\alpha,\mu\right)$.
For $a\ge 0$, we define the pruning measure
\begin{equation}
\label{e:pruneheight}
   \nu^a_N
 :=
   \sum_{x\in \mathcal G_N^a} \delta_x,
\end{equation}
where
$\mathcal G_N^a=\{x\in \mathcal G_N \mid r_N(\rho,x)=a\}$,
and the corresponding measure
\begin{equation}
	\nu_\infty^a := \sum_{x\in LT_\alpha^a \cap \muskel[LT_\alpha]} \delta_x
\end{equation}
on $LT_\alpha$. Here, we restrict the pruning measure to the points of $LT_\alpha$ which are not leaves in order
to ensure the condition $\nu_\infty^a\(\muleaf[LT_\alpha^a]\)=0$.
Because the probability that $\mu(LT_\alpha^a)\ne 0$ is zero for fixed $a$,
the sequence $(\nu_N^a)_{N\in \N\cup\{\infty\}}$ almost surely depends \bnuas[(LT_\alpha, \mu)]\ continuously on the distances, i.e.
	\[R^{LT_\alpha}{}_\ast \mu^{\otimes n} \left( {\rm Discont }(F_n) \right)=0 \;\; a.s.,\]
for $F_n$ as in \eqref{eq:lengthdepend}.  We use Proposition \ref{prop:lengthconv} and the previous construction to get
\begin{equation}
   \big(a_N\mathcal G_N^{\pi_t},\mu_N, \nu_N^a\big)_{t\geq0} \toSk \big(LT_\alpha^{\pi_t},\mu, \nu_\infty^a\big)_{t\geq0}.
\end{equation}
It is easy to check that $\left(LT_\alpha^{\pi_t},\mu, \nu_\infty^a\right)$ converges almost surely, as $t\rightarrow\infty$, in the {\rm LWV}-topology to $\left( LT_\alpha^{\leq a},\mu, 0 \right)$ where $LT_\alpha^{\leq a}=\{x\in LT_\alpha \mid r(\rho,x)\leq a\}$. This is the pruning construction at the height $a$ of Miermont \cite{Miermont2003}.
\label{Exp:005}
\end{example}\sm

\begin{remark}[Pruning based on other scaling results]
Some authors give other convergence of Galton-Watson trees to continuous trees. For example a sequence of
Galton-Watson trees $(\mathcal G_N)_{N\in \N}$ conditioned to have maximum height at least $\gamma_N T$
converges to a general L\'{e}vy tree conditioned to have maximum height at least $T$, see Proposition 2.5.2 in
\cite{DuqGall2002}. Or a sequence of Galton-Watson trees that converges to a forest of L\'{e}vy trees, see
Theorem 2.4.1 in \cite{DuqGall2002}. In the first case, the previous results clearly apply. In the second
case, in general we do not have an excursion with finite length anymore, i.e., the measure $\mu^{\rm ske}$ might become infinite. However, if we restrict the domain of the contour processes to a finite interval, we can still apply the previous results.
\label{Rem:005}
\end{remark}\sm

\begin{example}[More general pruning]
A non-uniform pruning process on the branch points of a general Galton-Watson tree has been defined by Abraham, Delmas
and He \cite{AbrDelHe2012}: they cut a branch point $v$ and its subtree above independently with probability $1-u^{c(v)-1}$,
where $c(v)$ is the number of children of $v$. This corresponds to taking the pruning measure $\nu^{ADH}_N$ on
$\mathcal G_N$ that is supported on the branch points and satisfies
\begin{equation}
\label{e:nuADH}
   \nu^{ADH}_N\(\{v\}\)
 :=
   c(v)-1.
\end{equation}

A pruning process on the infinite branch points of a L\'{e}vy tree has been defined by Abraham and Delmas
\cite{AbrahamDelmas2012}: they cut each infinite branch point and its subtree above independently with probability
$1-e^{-t\Delta_x}$ where $\Delta_x$ is the weight of the node $x$ that can be defined using the jumps of the L\'{e}vy
process. This corresponds to taking a measure $\nu^{AD}$ on the infinite branch points of the L\'{e}vy tree.

Because we know that a properly renormalized sequence of conditioned Galton-Watson trees converges to a L\'{e}vy tree, we conjecture that there exists a sequence $b_N$ such that
\begin{equation}
  \big( a_N\mathcal G_N , \mu^{\rm nod}_N , \nu_N^{\rm ske}+b_N \nu_N^{\rm ADH}\big) \toLWVN \big( LT , \mu^{\rm nod} , \nu^{\rm ske}+ \nu^{\rm AD}\big)
\end{equation}
where LT is a L\'{e}vy tree or at least an $\alpha$-stable L\'{e}vy tree with $b_N$ of the order $N^{-1/\alpha}$ up to a
slowly varying function. The Poisson point process with intensity $\nu^{\rm ske}+ \nu^{\rm AD}$ used in the pruning of the L\'{e}vy tree is the Poisson point process
given in Subsection~4.2 of~\cite{Voisin2011}.
\label{Exp:001}
\end{example}\sm

\begin{ex}[Cutting down trees]
Random deconstruction of trees is an old topic which has recently gained a lot of attention (compare,
\cite{MeirMoon1970,Panholzer2006,Janson2006,DrmotaIksanovMohleRosler2009,Holmgren2010,Bertoin2010,BertoinMiermont2012}). The main result of \cite{Janson2006}
is the following. Given a finite-variance Galton-Watson tree conditioned to have $N$ nodes, select an edge at random and
delete the subtree above. Repeat the procedure until the root is isolated.
Then the suitably rescaled number of cuts needed converges jointly with the rescaled tree to some random couple
$(Z_T,T)$. It is known that the limiting tree $T$ is the Brownian CRT, while (unconditioned) $Z_T$ is Rayleigh
distributed. In a very recent paper, Abraham and Delmas \cite{AbrahamDelmas2013} used a pruning with the length
measure on the Brownian CRT (compare Example \ref{ex:CRT}) and showed that given $T$, $Z_T$ equals in
distribution the averaged time it takes to separate a point from the root. The latter quantity was used in the
proof given by Janson \cite{Janson2006}.  In this example, we show that whenever bi-measure $\R$-trees converge
-- provided some extra tightness conditions hold -- Janson's quantities converge as well.

Let $({\mathcal G}_N,\mu_N,\nu_N)_{N\in\mathbb{N}\cup\{\infty\}}$ be a sequence of random bi-measure $\R$-trees
such that
\begin{equation}
\label{e:024}
   \big({\mathcal G}_N,\mu_N,\nu_N\big)\ToLWVN \big({\mathcal G}_\infty,\mu_\infty,\nu_\infty\big).
\end{equation}
For each $N\in\mathbb{N}\cup\{\infty\}$, let the pruning process $(X^N_t)_{t\ge 0}$ start in $X_0^N=({\mathcal G}_N,\mu_N,\nu_N)$.
Denote by $\Theta_N$ the {\em averaged time until a point gets separated from the root $\rho_N$},
where the average is taken with respect to the sampling measure $\mu_N$.
Given a realization $\smallx\in \Hbi$ of $X^N_0$, consider for each $u\in\supp(\mu_N)$ the (random) time $\mathcal E_\smallx^u$ until
$u$ gets separated from $\rho_N$, i.e., until a cut point falls on $[\rho_N,u[$. We abbreviate $\mathcal E_N^u:= \mathcal E_{X_0^N}^u$
and obtain
\begin{equation}\label{e:Theta}
	\Theta_N = \int_{{\mathcal G}_N}\mu_N(\mathrm{d}u)\,{\mathcal E}_N^u.
\end{equation}
For all finite subsets $\{u_1,...,u_n\}\subseteq{\mathcal G}_N$ and $t_1,...,t_n\ge 0$, the distribution of
$\CE_N^{u_1},\ldots,\CE_N^{u_n}$ is given by
\begin{equation} \label{e:density}
	\mathbb{P}\big({\mathcal E}_N^{u_1}\ge t_1,...,{\mathcal E}_N^{u_n}\ge t_n\bigm|({\mathcal G}_N,\mu_N,\nu_N)\big)
	  = \prod_{l=1}^n  e^{-t_{p(l)} \cdot \nu_N(S_l \setminus S_{l+1})},
\end{equation}
where $p\colon \{1,2,...,n\}\to\{1,2,...,n\}$ is any permutation such that $t_{p(1)} \le \cdots \le t_{p(n)}$,
and $S_l:= \rspan{u_{p(l)},...,u_{p(n)}}$.
Then for all $n\in\mathbb{N}$,
\begin{equation}\label{e:Thetan}
\begin{aligned}
	\mathbb{E}\bigl[\Theta_N^n\bigr] &= \mathbb{E}\Bigl[\int_{{\mathcal G}^n_N}\mu^{\otimes n}_N(\mathrm{d}(u_1,...,u_n))\,
		\mathbb{E}\big[\prod_{l=1}^n{\mathcal E}_N^{u_l}\bigm|({\mathcal G}_N,\mu_N,\nu_N)\big]\Bigr] \\
	&= n!\cdot\mathbb{E}\Bigl[ \int_{{\mathcal G}^n_N}\mu^{\otimes n}_N(\mathrm{d}(u_1,...,u_n))\,
		\prod_{j=1}^n \frac{1}{\nu_N\big(\rspan{u_1,...,u_j}\big)}\Bigr],
\end{aligned}
\end{equation}
where the last equality is obtained by using (\ref{e:density}) and easy computations with the formula
${\mathbb E\bigl[\prod_{i=1}^n X_i\bigr]=\int_{\R_+^n} \mathbb P(X_i>t_i, \forall i)\,{\rm d}(t_1 \dots t_n)}$.

\noindent Now assume the following:
\begin{enumerate}
\item For all $n\in\mathbb{N}$, $\eps>0$ there is an $M>0$ such that
	\begin{equation}\label{e:037}
		\sup_{N\in\N} \E\biggl[\int_{\G^n_N}\mu^{\otimes n}_N(\mathrm{d}(u_1,...,u_n))\,\Bigl(\prod_{j=1}^n
		\frac{1}{\nu_N\bigl(\rspan{u_1,...,u_j}\bigr)} - M\Bigr)^+\biggr] \le \eps.
	\end{equation}
\item\label{it:momuniq}
	There is only one probability measure $\mathbb{Q}$ on $\R_+$ with moments
	\begin{equation}\label{e:038}
	   \int_{\R_+}\mathbb{Q}(\mathrm{d}\theta)\,\theta^n
		=n!\cdot \mathbb{E}\Bigl[\int\mu^{\otimes n}_\infty(\mathrm{d}(u_1,...,u_n))\prod_{j=1}^n \frac{1}{\nu_\infty(\rspan{u_1,...,u_j})}\Bigr]
	\end{equation}
	for each $n\in\mathbb{N}$.
\end{enumerate}
Note that these assumptions are in particular satisfied in the case of conditioned finite variance Galton-Watson
trees converging to the Brownian CRT if $\nu_N$ is the length measure and $\mu_N$ the uniform distribution on
the nodes (see, e.g., \cite[proof of Lem.~4.5, Thm.~1.9]{Janson2006}).

For each $n,M\in\mathbb{N}$, define $\gamma_M^n\colon \Hn \to\R_+$ by
\begin{equation}
	\gamma_M^n\(T,(u_1,\ldots,u_n), \nu\) := M \land \prod_{j=1}^n \nu\(\rspan{u_1,\ldots,u_j}\)^{-1}.
\end{equation}
Then $\gamma^n_M\in{\mathcal C}_b(\H_n)$ if $\H_n$ is equipped with the \sGwtopo, and the \LWVconv\
(\ref{e:024}) together with Proposition~\ref{prop:equivconvergence} implies that
\begin{equation}
	\E\Bigl[\plainintmu[N]{\gamma^n_M\circ \tau_{({\mathcal G}_N,\mu_N,\nu_N)}}\Bigr] \tNo
	\E\Bigl[\plainintmu[\infty]{\gamma^n_M\circ \tau_{({\mathcal G}_\infty,\mu_\infty,\nu_\infty)}}\Bigr].
\end{equation}
Thus, we also have
$\mathbb{E}\big[\Theta_N^n\big]\tNo\mathbb{E}\big[\Theta_\infty^n\big]$ for each $n\in\mathbb{N}$, provided that
(\ref{e:037}) holds. By assumption~\ref{it:momuniq}, the moments of $\Theta_\infty$ determine its distribution
uniquely, and therefore the method of moments yields
\par\vspace*{\abovedisplayskip}\hfill $\displaystyle\Theta_N\TNo\Theta_\infty.$ \exend
\end{ex}\sm

\noindent{\sc Acknowledgement. }
The authors would like to thank the anonymous referees for several helpful remarks and references.

\ifonlythms\end{xcomment}\fi
\bibliography{LWV}
\bibliographystyle{alpha}
\end{document}